\documentclass[11pt, reqno]{amsart}

\usepackage[margin=2.75cm]{geometry}

\usepackage{amsmath, amssymb, amsthm}

\usepackage{xcolor}

\usepackage[colorlinks=true, urlcolor=blue, linkcolor=blue, citecolor=blue, pdfstartview=FitH]{hyperref}

\usepackage{esint}

\newtheorem{theorem}{Theorem}
\newtheorem{lemma}{Lemma}
\newtheorem{proposition}{Proposition}
\newtheorem{definition}{Definition}
\newtheorem{corollary}{Corollary}

\newtheorem{remark}{Remark}

\newtheorem*{ack}{Acknowledgments}

\begin{document}

\title[Volume bound involving Ricci and scalar curvature]{An improved volume bound under Ricci and scalar curvature lower bounds}
\author[K.-K. Kwong]{Kwok-Kun Kwong}
\address{School of Mathematics and Physics\\
University of Wollongong\\
Northfields Ave, Wollongong, NSW $2500$\\
Australia}
\email{kwongk@uow.edu.au}
\date{}
\subjclass[2020]{Primary 53C20; Secondary 53C21, 53C24.}

\keywords{volume comparison, Ricci curvature, scalar curvature, geodesic flow, shuffling lemma, Branson Q-curvature}

\maketitle
\begin{abstract}
We study volume comparison for closed Riemannian manifolds satisfying a positive Ricci curvature lower bound together with an improved scalar curvature lower bound.
We prove that if a closed Riemannian manifold $(N^n, g)$ satisfies $\operatorname{Ric}_g\ge (n-1)g$ and the scalar curvature $R_g\ge n(n-1)(1+\varepsilon)$, then its volume satisfies
$$\lvert N\rvert_g \le \frac{1}{\sqrt{1+n\varepsilon}}\lvert\mathbb S^n\rvert. $$
In fact, assuming only $\mathrm{Ric}_g\ge(n-1)g$, we can prove that
$$\frac{|N|_g}{\left|\mathbb{S}^n\right|} \le \frac{1}{|N|_g} \int_N\left(\frac{R_g}{n-1}-(n-1)\right)^{-\frac{1}{2}} d \mathrm{vol}_g. $$
The equality holds if and only if $N$ is isometric to the unit sphere.

The proof combines a coefficient-adapted Jacobian comparison with a new integral shuffling comparison for scalar Jacobi solutions, inspired by Brown and Freedman \cite{BrownFreedman2022}. This yields an estimate that retains the full Ricci spectrum. The resulting volume bound agrees to first order in $\varepsilon$ with the factor appearing in Bray's conjecture. A further consequence of the argument is an averaged volume comparison for metric balls involving the scalar curvature. We also obtain a volume comparison theorem under a weighted integral lower bound on the Branson $Q$-curvature.
\end{abstract}

\section{Introduction}

Let $(\mathbb S^n, g_{\mathbb S^n})$ denote the unit round sphere. Bishop's volume comparison theorem states that if a closed $n$-dimensional Riemannian manifold\footnote{All manifolds are assumed to be connected in this paper.}
$(N, g)$ satisfies $\operatorname{Ric}_g\ge (n-1)g$, then the volume $|N|_g\le |\mathbb S^n|$. Taking the trace of the Ricci lower bound gives $R_g\ge n(n-1)$. It is therefore natural to ask whether a strictly stronger scalar-curvature lower bound leads to a corresponding improvement of Bishop's volume bound.

In a remarkable thesis that established the Riemannian Penrose inequality and developed volume comparison theorems involving scalar curvature, Bray proved a result of this type in dimension three and proposed a higher-dimensional generalization \cite[Conjecture~4]{Bray}. This conjecture was very recently proved by Jiang, Li, and Wang~\cite{JiangLiWang}. In the normalization used in this paper, the resulting theorem can be stated as follows.

\begin{theorem}[Bray's conjecture \cite{Bray, JiangLiWang}]\label{thm:Bray conjecture}
For each $n\ge 3$, there exists a constant $\Lambda_n>1$, depending only on $n$, such that every closed $n$-dimensional Riemannian manifold $(N, g)$ satisfying
$$
\operatorname{Ric}_g\ge (n-1)g,
\qquad
R_g\ge n(n-1)\Lambda_n,
$$
also satisfies
\begin{equation}\label{eq:bray-scaled}
\frac{|N|_g}{|\mathbb S^n|} \le \Lambda_n^{-\frac n2}.
\end{equation}
\end{theorem}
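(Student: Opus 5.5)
We plan to prove Theorem~\ref{thm:Bray conjecture} with Bray's isoperimetric-profile method rather than with the Jacobi-field argument used later in the paper. The main theorem of this paper gives only $|N|_g\le(1+n\varepsilon)^{-1/2}|\mathbb S^n|$. Since $(1+\varepsilon)^{n}\ge 1+n\varepsilon$, this is weaker than $(1+\varepsilon)^{-n/2}|\mathbb S^n|$, so it cannot be quoted directly.

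\textbf{Setup.} Let $I(v)$, for $0<v<|N|_g$, be the isoperimetric profile of $(N,g)$. By standard geometric measure theory, $I$ is realized by regions $\Omega_v$ whose boundaries $\Sigma_v$ are smooth, apart from a singular set of codimension at least $8$ that we handle as usual. Each $\Sigma_v$ has constant mean curvature $H$ and is volume-preserving stable. The first-variation identity $I'(v)=H$ and the second variation, tested with the constant function, give in the viscosity/barrier sense
\[
I\,I''\le -\frac{1}{I}\int_{\Sigma_v}\bigl(\operatorname{Ric}(\nu,\nu)+|A|^2\bigr).
\]
Two inputs bound the right-hand side.
\begin{itemize}
\item The Ricci bound together with $|A|^2\ge H^2/(n-1)$ gives the Bavard--Pansu--Bayle inequality. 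This compares $I$ with the profile $I_{\mathbb S^n}$ and yields $|N|_g\le|\mathbb S^n|$.
\item The scalar bound enters through the traced Gauss equation, $2\operatorname{Ric}(\nu,\nu)=R_g-R_\Sigma+H^2-|A|^2$. With it the integral becomes $\tfrac12\int_\Sigma(R_g-R_\Sigma+H^2+|A|^2)$, where $R_g\ge n(n-1)\Lambda$ now appears.
\end{itemize}

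\textbf{Comparison.} The aim is to compare $I$ with the profile of the round sphere of radius $\Lambda^{-1/2}$, whose total volume is $\Lambda^{-n/2}|\mathbb S^n|$. Once $I\le I_{\Lambda}$ holds on the whole interval, the profile must reach zero no later than the model's does, which gives \eqref{eq:bray-scaled}. The argument is a maximum-principle comparison of the ODE inequalities on $(0,|N|_g)$ and runs in two regimes.
\begin{itemize}
\item \emph{Small volumes.} Here $\Sigma_v$ is a nearly round small sphere. The expansion of small isoperimetric regions shows that $I$ is governed by the scalar curvature at the concentration point. This gives $I\le I_\Lambda$ initially, using the same mechanism as the Gray--Vanhove ball-volume expansion.
\item \emph{Large volumes.} This regime needs the Gauss-equation form of the inequality, and the model comparison must hold there too.
\end{itemize}

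\textbf{Main obstacle.} The key difficulty is controlling $\int_\Sigma R_\Sigma$ in dimensions $n\ge4$. In dimension three Bray used Gauss--Bonnet, which bounds $\int_\Sigma R_\Sigma$ by $8\pi$ times a topological term; no such identity exists in higher dimensions. Our first attempt is to keep the Ricci-based and scalar-based inequalities in a convex combination. We would spend a fraction $\theta$ of the Ricci bound to make $\Sigma$ almost umbilic, with $|\mathring A|^2$ small in $L^1$. We would then use the Ricci lower bound on $N$ together with the Gauss equation to get a lower Ricci bound on $\Sigma$, and Bishop's theorem on $\Sigma$ to control $|\Sigma|$, and thereby $\int R_\Sigma$, against the model.

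This only closes if $\Lambda$ is very close to $1$, which is why the statement asserts just the existence of some $\Lambda_n>1$. If the direct estimate fails, the fallback is a contradiction argument. Take $\Lambda_k\downarrow1$ and counterexamples $N_k$. Their volumes tend to $|\mathbb S^n|$, so by Colding's volume rigidity the $N_k$ converge to $\mathbb S^n$ in the Gromov--Hausdorff sense. One then linearizes the profile inequality around the round sphere. The surplus $\int(R_g-n(n-1))\ge n(n-1)(\Lambda_k-1)|N_k|_g$ should force a volume deficit of first order in $\Lambda_k-1$ with coefficient at least $n/2$. Securing the uniform first-order control needed in this step is where we expect most of the work to lie.
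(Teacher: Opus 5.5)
The paper does not prove this statement itself. It quotes it from Bray ($n=3$) and from Jiang--Li--Wang, whose argument uses Perelman's entropy, spherical rearrangement and sharp functional inequalities on the sphere. The paper's own Jacobi-field method reaches only $(1+n\varepsilon)^{-1/2}$, as you correctly note.

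Your proposal has a genuine gap. The step you call the main obstacle is the entire difficulty, and neither of your routes resolves it. In the Gauss-equation form of the profile inequality, $\int_\Sigma R_\Sigma$ must be bounded \emph{above}. The round sphere of radius $\Lambda^{-1/2}$ satisfies the hypotheses and attains equality in the conclusion, so what you need is in effect the sharp bound
\[
\int_\Sigma R_\Sigma\le (n-1)(n-2)\,\omega_{n-1}^{\frac{2}{n-1}}\,|\Sigma|^{\frac{n-3}{n-1}},
\]
which for $n=3$ is the $8\pi$ from Gauss--Bonnet. Your first route cannot give this, for three reasons.
\begin{enumerate}
\item By the Gauss equation, $\operatorname{Ric}_\Sigma(e,e)=\operatorname{Ric}_N(e,e)-\operatorname{sec}_N(e,\nu)+H\,A(e,e)-A^2(e,e)$. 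A lower Ricci bound on $\Sigma$ therefore needs an \emph{upper} bound on the mixed sectional curvatures $\operatorname{sec}_N(e,\nu)$. The assumption $\operatorname{Ric}_g\ge(n-1)g$ does not supply this, and near-umbilicity does not affect that term.
\item Even granting such a bound, it together with Bishop's theorem on $\Sigma$ bounds $R_\Sigma$ from below and $|\Sigma|$ from above. Neither controls $\int_\Sigma R_\Sigma$ from above. Intrinsically no such control exists when $n-1\ge 3$, since normalized total scalar curvature is unbounded above in every conformal class.
\item The convex combination makes the requirement harder, not easier. On the model, every inequality you use is an equality except $\operatorname{Ric}(\nu,\nu)\ge n-1$, which is off by $(n-1)(\Lambda-1)$. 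Spending a fraction $\theta$ of the Ricci inequality must then be paid for by a bound on $\int_\Sigma R_\Sigma$ strictly \emph{below} the round value.
\end{enumerate}
Nothing in your sketch shows the scheme closes even for $\Lambda$ near $1$.

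The fallback has two further gaps.

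\textbf{Compactness.} Colding's theorem gives only Gromov--Hausdorff closeness of $N_k$ to $\mathbb S^n$. It gives no control on the curvature of $g_k$, which may concentrate on small sets. So neither the isoperimetric hypersurfaces nor $\int_\Sigma R_\Sigma$ can be linearized around the round sphere. This missing compactness is exactly why Yuan assumes $C^2$-closeness and Zhang assumes $\operatorname{Ric}_g\le Cg$. Moreover, with $C^2$-convergence no linearization would be needed: Yuan's theorem applied to $\Lambda_k g_k$, which has $R\ge n(n-1)$, directly gives $|N_k|_{g_k}\le\Lambda_k^{-n/2}|\mathbb S^n|$.

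\textbf{The first-order target.} A ``first-order deficit with coefficient at least $n/2$'' cannot produce a contradiction.
\begin{itemize}
\item As an exact inequality, $|N|_g/|\mathbb S^n|\le 1-\frac n2(\Lambda-1)$ is false. By convexity the model sphere has ratio $\Lambda^{-n/2}>1-\frac n2(\Lambda-1)$.
\item As an asymptotic statement, it is already Corollary~\ref{cor:uniform-scalar-bound}, since $(1+n\varepsilon)^{-1/2}=1-\frac n2\varepsilon+O(\varepsilon^2)$. It is also compatible with $|N_k|_{g_k}>\Lambda_k^{-n/2}|\mathbb S^n|$, because the two expressions agree to first order.
\end{itemize}
What you would need is exact control at second order. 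That is precisely the gap of $\frac{n(n-1)}{4}\varepsilon^2$ that the paper leaves open.
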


Equivalently, for each $n \ge 3$, there exists a positive $\epsilon_0(n)<1$ such that if $(N^n, g)$ is any complete smooth Riemannian manifold satisfying
$$
\operatorname{Ric}_g \ge \epsilon_0(n) (n-1)g,
\qquad
R_g \ge n(n-1),
$$
then $|N|_g\le |\mathbb S^n|$.
\medskip
\begin{remark}
\noindent
The first version of the present paper was posted on arXiv on 19 August 2026. The first version of Jiang, Li, and Wang~\cite{JiangLiWang}, which proves Bray's conjecture, was posted on 20 August 2026. The two works were developed independently and use different methods.
\end{remark}
The proof of Jiang, Li, and Wang uses Perelman's entropy, spherical rearrangement, and sharp functional inequalities on the sphere. Shortly after the first version of \cite{JiangLiWang} appeared, Fan and Yu~\cite{FanYu} and Jiang, Li, and Wang~\cite{JiangLiWang} independently established rigidity in the equality case, proving that equality holds if and only if $(N, g)$ is isometric to a round sphere. Jiang, Li, and Wang also obtained an application involving the Branson $Q$-curvature.

Despite the recent resolution of the conjecture, the three-dimensional case remains substantially better understood at a quantitative level. Bray's football theorem \cite[Theorem 19]{Bray} gives a sharp function $\alpha\colon(0, 1]\to[1, \infty)$ such that if $\operatorname{Ric}_g\ge 2\eta g$ and $R_g\ge 6$, then $|N|_g\le \alpha(\eta)|\mathbb S^3|$ for every closed three-manifold $(N^3, g)$. If $\eta_0=\inf\{\eta\in(0, 1]:\alpha(\eta)=1\}$, then $\alpha(\eta)=1$ for $\eta\ge \eta_0$.

According to Bray, numerical analysis suggests that $\eta_0$ lies between $0.134$ and $0.135$. Gursky and Viaclovsky proved rigorously that $\eta_0\le \frac12$ \cite[Section~4.1]{GurskyViaclovsky}. In particular, if $\operatorname{Ric}_g\ge g$ and $R_g\ge 6$, then $|N|_g\le |\mathbb S^3|$.
Brendle treated the equality case and proved that equality under these assumptions forces $(N, g)$ to be isometric to the unit round sphere \cite[Theorem~5.6]{BrendleRigidity}. The argument uses the Gauss equation and the Gauss--Bonnet theorem on two-dimensional isoperimetric surfaces. This dimension-dependent step is one of the main obstacles to carrying the method over directly to dimensions $n\ge 4$.

Some additional assumptions are necessary if one retains only a scalar curvature lower bound. A deformation result of Corvino, Eichmair, and Miao shows that scalar curvature alone does not generally control volume \cite{CorvinoEichmairMiao}. This motivates volume comparison results near special background metrics, such as $V$-static or Einstein metrics.

Before the resolution in \cite{JiangLiWang}, several important special cases and related comparison results were known.
Yuan \cite{Yuan} proved a volume comparison result near a strictly stable Einstein metric. More precisely, suppose that $\operatorname{Ric}_{\bar g}=(n-1)\lambda\bar g$, $\lambda\ne 0$, and that $\bar g$ is strictly stable. Then there is a constant $\delta>0$ such that every metric $g$ satisfying
$$
R_g\ge n(n-1)\lambda,
\qquad
\|g-\bar g\|_{C^2(N, \bar g)}<\delta,
$$
obeys
$$
|N|_g\le |N|_{\bar g}\quad\text{if }\lambda>0,
\qquad
|N|_g\ge |N|_{\bar g}\quad\text{if }\lambda<0.
$$
Equality holds only when $g$ is isometric to $\bar g$ \cite[Theorem~B]{Yuan}. Applying this theorem to the unit round sphere gives
$$
R_g\ge n(n-1)
\quad\Longrightarrow\quad
|\mathbb S^n|_g\le |\mathbb S^n|
$$
whenever $g$ is sufficiently $C^2$-close to the round metric \cite[Corollary~B]{Yuan}. No separate Ricci curvature assumption is needed in this local result, since $C^2$-closeness already implies the Ricci tensor to be close to that of the round sphere. This result concerns metrics in a $C^2$-neighborhood of the round metric.

Zhang \cite{Zhang} obtained two complementary higher-dimensional results. The first is nonperturbative but assumes additional symmetry. He proved that for an axisymmetric metric $g$, for any $n\ge 3$, there exists $\eta_n\in(0, 1)$ such that if $\operatorname{Ric}_g\ge \eta_n(n-1)g$ and $R_g\ge n(n-1)$, then $|N|_g\le |\mathbb S^n|$ \cite[Theorem~3]{Zhang}. Unlike Yuan's result, no closeness to the round metric and no Ricci upper bound are assumed. The symmetry reduces the two curvature inequalities to differential inequalities for the single warping function $f$, from which the volume estimate is obtained directly.

Zhang \cite[Theorem~4]{Zhang} also proved that, for every $C>0$, there exists $\delta=\delta(n, C)\in(0, 1)$ such that any closed $n$-manifold satisfying $(1-\delta)(n-1)g\le \operatorname{Ric}_g\le Cg$ and $R_g\ge n(n-1)$
also satisfies $|N|_g\le |\mathbb S^n|$. The proof relies on the upper Ricci bound in its compactness argument, and the constant $\delta$ is allowed to depend on $C$.

Kwong \cite[Theorem~2.5]{Kwong2025} also obtained an explicit quantitative upper bound for the total volume under an additional Ricci curvature upper bound. More precisely, if $\kappa>0$ and
$0\le \operatorname{Ric}_g-(n-1)g \le \kappa g$,
then $|N|_g$ is bounded above by $|\mathbb S^n|$ minus a nonnegative correction term involving $\kappa$ and the average scalar-curvature excess
$\fint_N\left(R_g-n(n-1)\right)\, d\mathrm{vol}_g$.

The estimates obtained in this paper give explicit quantitative improvements of Bishop's volume bound for every $\varepsilon\ge 0$, without additional symmetry, closeness, or upper curvature assumptions. More importantly, the estimate retains the full Ricci spectrum and is valid under the sole assumption of positive Ricci curvature.

\begin{theorem}[Theorem~\ref{thm:determinant}]
\label{thm:intro-determinant}
Let $(N^n, g)$ be closed with $\operatorname{Ric}_g>0$. Then
\begin{equation}\label{eq:intro-determinant-bound}
\frac{|N|_g}{|\mathbb S^n|}
\le\frac{1}{|N|_g} \int_N \det\nolimits_g \left(\frac{\operatorname{Ric}}{n-1}\right)^{-\frac12} \, d\mathrm{vol}_g.
\end{equation}
The equality holds if and only if $(N, g)$ is isometric to a round sphere of positive constant curvature.
\end{theorem}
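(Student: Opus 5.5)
The plan is to write $|N|_g^2$ as an integral over the unit tangent bundle $SN$ and compare it, direction by direction, with a model integral that depends only on $k(v):=\operatorname{Ric}(v,v)/(n-1)$. The determinant then appears through the classical identity
\begin{equation*}
\int_{S^{n-1}} (v^{T}Av)^{-\frac n2}\, d\sigma(v) = |S^{n-1}| \det A^{-\frac12},
\end{equation*}
valid for positive definite $A$. Applied fibrewise with $A=\operatorname{Ric}/(n-1)$ it gives $\int_{S_xN} k(v)^{-n/2}\,dv = |S^{n-1}|\det_g(\operatorname{Ric}/(n-1))^{-1/2}(x)$. Together with $\int_0^\pi \sin^{n-1} s\,ds = |\mathbb S^n|/|S^{n-1}|$, the claim \eqref{eq:intro-determinant-bound} is equivalent to
\begin{equation*}
|N|_g^2 \le \int_{SN} k(v)^{-\frac n2}\int_0^\pi \sin^{n-1}s\,ds\, d\mu(v),
\end{equation*}
where $\mu$ is the Liouville measure.

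\textbf{Step 1 (polar coordinates).} For $x\in N$ and $v\in S_xN$ let $c(v)$ be the cut time and $J_v(t)$ the Jacobian of $\exp_x$ along $\gamma_v$, so $|N|_g=\int_{S_xN}\int_0^{c(v)}J_v(t)\,dt\,dv$. Integrating in $x$ gives
\begin{equation*}
|N|_g^2=\int_{SN}\int_0^{c(v)}J_v(t)\,dt\,d\mu(v).
\end{equation*}
Set $f=J_v^{1/(n-1)}$. The Riccati equation and Cauchy--Schwarz give $f''+k(\phi_t v)f\le 0$ with $f(0)=0$, $f'(0)=1$, and $f>0$ on $(0,c(v))$. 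Here $\phi_t$ is the geodesic flow. This is where the coefficient is adapted to the actual direction $\gamma_v'(t)$, rather than using $k\ge1$ as in Bishop's theorem.

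\textbf{Step 2 (shuffling comparison; main obstacle).} The core is a one-dimensional lemma in the spirit of Brown--Freedman. If $k(t)\ge1$ is continuous on $[0,c]$ and $f$ is as above, then
\begin{equation*}
\int_0^c f(t)^{n-1}\,dt\le \int_0^c \sigma_{k(t)}(t)^{n-1}\,dt,\qquad \sigma_k(t)=\frac{\sin(\sqrt k\,t)}{\sqrt k}\ \text{for } t<\pi/\sqrt k,\quad \sigma_k(t)=0 \text{ otherwise}.
\end{equation*}
The model evaluated at time $t$ thus uses the frozen coefficient $k(t)$. I would first prove it for piecewise constant $k$, by induction on the number of pieces. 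On each interval I would compare $f$ with the sine solution via a Sturm/Prüfer angle argument, showing that the integral can only increase when the larger values of $k$ are "shuffled" to act as if from time $0$. I would then pass to general $k$ by approximation. I expect this step to be delicate, because the right-hand side is not itself a solution of an ODE. If the pointwise-in-$t$ form fails, a fallback is to prove only the integrated version needed after Step 3, with $t$ averaged against the flow.

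\textbf{Step 3 (Liouville invariance).} Write $\int_{SN}\int_0^{c(v)}\sigma_{k(\phi_tv)}(t)^{n-1}\,dt\,d\mu$ using Fubini. Substitute $w=\phi_tv$; this map preserves $\mu$, and $t<c(v)$ becomes a condition on $w$ through the reversed cut time $c(-w)$. Dropping the cut-time constraint, which only enlarges the integral, gives
\begin{equation*}
|N|_g^2\le\int_{SN}\int_0^\infty \sigma_{k(w)}(t)^{n-1}\,dt\,d\mu(w)=\int_{SN}k(w)^{-\frac n2}\,d\mu(w)\int_0^\pi\sin^{n-1}s\,ds,
\end{equation*}
which is the desired inequality.

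\textbf{Equality.} Equality forces three things: no loss from the cut constraint, equality in the Cauchy--Schwarz step (so the shape operator is umbilic), and equality in the shuffling lemma, which should force $k$ to be constant along each geodesic with $f=\sigma_k$. From these one gets that each geodesic sphere is umbilic with the model Jacobian and Ricci is constant. By Schur-type reasoning the curvature is then constant, and $(N,g)$ is a round sphere of radius $1/\sqrt k$.
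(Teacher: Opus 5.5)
\textbf{The gap is Step 2.} As you formulated it, the single-geodesic lemma is false.

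Take $k\equiv 1$ on $[0,\pi/2]$. Let $k$ increase continuously to $100$ on $[\pi/2,\pi/2+\delta]$ and stay equal to $100$ afterwards. Let $f$ solve $f''+kf=0$, $f(0)=0$, $f'(0)=1$. Then $f=\sin t$ on $[0,\pi/2]$, and afterwards $f\approx\cos(10(t-\pi/2))$, which stays positive until about $\pi/2+\pi/20$; let $c$ be this first zero. For $t\ge\pi/2+\delta>\pi/10$ we have $\sigma_{k(t)}(t)=\sigma_{100}(t)=0$. So your right-hand side is at most $\int_0^{\pi/2}\sin^{n-1}t\,dt+\delta$. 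The left-hand side exceeds $\int_0^{\pi/2}\sin^{n-1}t\,dt$ by roughly $\frac1{10}\int_0^{\pi/2}\cos^{n-1}u\,du$, independently of $\delta$.

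So no Sturm/Pr\"ufer induction along one geodesic can succeed. A geodesic that sees small Ricci curvature early and large Ricci curvature late genuinely beats the frozen-coefficient model. The comparison becomes true only after averaging over all geodesics, and only because the geodesic flow preserves Liouville measure, which is invisible on a single geodesic. Your fallback (``the integrated version, with $t$ averaged against the flow'') is therefore the entire content of the theorem, and the proposal gives no mechanism for proving it. The temporal rearrangement you describe, moving large values of $k$ toward time $0$ on one geodesic, is also not what Brown and Freedman's shuffling does.

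\textbf{The missing idea} is the Brown--Freedman multiset comparison, which works at a \emph{fixed} time across a \emph{family}. If $q_1,\dots,q_L$ are coefficient functions whose values at almost every time form the same multiset $\{a_1,\dots,a_L\}$, then the stopped solutions satisfy $\sum_\ell j_\ell(T)^{n-1}\le\sum_\ell\widehat s_{a_\ell}(T)^{n-1}$. The paper promotes this to
$\int_{SN}\widehat y_z(T)^{n-1}\,dm\le\int_{SN}\widehat s_{K(z)}(T)^{n-1}\,dm$ for every $T$, where $y_z''+K(G_tz)y_z=0$. The steps are:
\begin{enumerate}
\item Approximate $K$ uniformly by finitely-valued $K_N$ and freeze $K_N(G_{t_{r-1}}z)$ on a time grid. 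Liouville invariance makes the level-set measures the same on every interval.
\item Approximate the cell weights by rationals that preserve these balancing identities, and apply the finite inequality.
\item Pass to the limit using $L^1$-continuity of stopped solutions in the coefficient, together with dominated convergence.
\end{enumerate}
Combine this with $\widehat F_z\le\widehat y_z^{\,n-1}$ (your Step 1 plus Sturm comparison) and integrate in $T$. This gives exactly the inequality your Step 3 was aiming at. By invariance, the right side equals the average of $\widehat s_{K(G_Tz)}(T)^{n-1}$, i.e.\ your $\sigma_{k(\phi_Tv)}(T)^{n-1}$. So the correct statement is yours integrated over $SN$ at each time, not integrated over $t$ along each geodesic.

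\textbf{Equality case.} You should not rely on rigidity in the shuffling step. The paper uses only that equality forces $\widehat F_z=\widehat y_z^{\,n-1}$, which makes small geodesic spheres umbilic. Then $A_z(t)=t^{-1}I-\frac t3\mathcal R_v+O(t^2)$ gives $\mathcal R_v=\frac{\operatorname{Ric}(v,v)}{n-1}I$ on $v^\perp$. Hence the metric is Einstein and, by Schur, a space form $\mathbb S^n(r)/\Gamma$. The volume equality then forces $\Gamma$ to be trivial, a step your sketch omits.
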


Under the stronger assumption $\operatorname{Ric}_g\ge (n-1)g$, the determinant estimate yields an integral volume bound involving only the scalar curvature. Indeed, the Ricci curvature lower bound and the elementary inequality
$\displaystyle \prod_{i=1}^n(1+a_i)\ge 1+\sum_{i=1}^n a_i$, $\displaystyle a_i\ge 0$,
give
$$
\det\nolimits_g\left(\frac{\operatorname{Ric}_g}{n-1}\right)
\ge
\frac{R_g}{n-1}-(n-1).
$$
Together with \eqref{eq:intro-determinant-bound}, this gives the following result.

\begin{theorem}[Theorem~\ref{thm:main}]
\label{thm:intro-main2}
Let $(N^n, g)$ be a closed Riemannian manifold, where $n\ge 3$, and suppose that $\operatorname{Ric}_g\ge (n-1)g$. Then
$$
\frac{|N|_g}{|\mathbb S^n|}
\le
\frac{1}{|N|_g}
\int_N
\left(\frac{R_g}{n-1}-(n-1)\right)^{-\frac12}
\, d\mathrm{vol}_g.
$$
The equality holds if and only if $N$ is isometric to the unit sphere.
\end{theorem}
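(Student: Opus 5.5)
The plan is to derive the statement directly from Theorem~\ref{thm:intro-determinant}, using the pointwise determinant inequality noted just before it. The hypothesis $\operatorname{Ric}_g\ge (n-1)g$ implies $\operatorname{Ric}_g>0$, so Theorem~\ref{thm:intro-determinant} applies. It gives
\[
\frac{|N|_g}{|\mathbb S^n|}\le \frac{1}{|N|_g}\int_N \det\nolimits_g\left(\frac{\operatorname{Ric}_g}{n-1}\right)^{-\frac12}\,d\mathrm{vol}_g .
\]
At each point, diagonalize $\operatorname{Ric}_g/(n-1)$ with respect to $g$. Its eigenvalues have the form $1+a_i$ with $a_i\ge 0$, and $\sum_i a_i = \frac{R_g}{n-1}-n$. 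The elementary inequality $\prod_i(1+a_i)\ge 1+\sum_i a_i$ for $a_i\ge0$ (proved by induction, expanding the product) then gives
\[
\det\nolimits_g\left(\frac{\operatorname{Ric}_g}{n-1}\right)\ge \frac{R_g}{n-1}-(n-1)\ge 1>0 .
\]
Since $t\mapsto t^{-1/2}$ is decreasing on $(0,\infty)$, we get $\det_g(\operatorname{Ric}_g/(n-1))^{-1/2}\le (\frac{R_g}{n-1}-(n-1))^{-1/2}$ pointwise. Integrating this over $N$ yields the inequality.

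For the equality case, suppose equality holds in the final inequality. Both intermediate inequalities must then be equalities. In particular equality holds in Theorem~\ref{thm:intro-determinant}, so $(N,g)$ is isometric to a round sphere of some constant curvature $\kappa>0$. The hypothesis $\operatorname{Ric}_g\ge (n-1)g$ forces $\kappa\ge 1$.

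To pin down $\kappa=1$, I will compute directly. On a sphere of curvature $\kappa$ we have $\det_g(\operatorname{Ric}_g/(n-1))=\kappa^n$ and $\frac{R_g}{n-1}-(n-1)=n\kappa-(n-1)$. For $\kappa>1$, the product inequality is strict as soon as two of the $a_i$ are positive, which holds since all $a_i=\kappa-1>0$ and $n\ge 3$. Equivalently, $\kappa^n>n\kappa-(n-1)$ for $\kappa>1$ by strict convexity (Bernoulli). This would make the pointwise bound strict everywhere, and hence the integrated inequality strict, which is a contradiction. So $\kappa=1$. Conversely, for the unit sphere both sides equal $1$. I don't anticipate any genuine obstacle: all the analytic difficulty is contained in Theorem~\ref{thm:intro-determinant}. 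The only delicate point is this equality bookkeeping, in particular tracking strictness through monotonicity of $t^{-1/2}$ and continuity of the integrand.
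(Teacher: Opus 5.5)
Your proposal is correct and follows essentially the same route as the paper. You apply Theorem~\ref{thm:determinant} together with the pointwise bound $\det\nolimits_g(\operatorname{Ric}_g/(n-1))\ge \frac{R_g}{n-1}-(n-1)$ of Lemma~\ref{lem:det-lower}; for rigidity you combine the equality case of Theorem~\ref{thm:determinant} (a round sphere of curvature $\kappa\ge 1$) with strictness of the product inequality when $\kappa>1$, which is the content of the paper's remark that $\operatorname{Ric}_g-(n-1)g$ has rank at most one. Your explicit computation with the constant integrands, and your check of the converse, make this bookkeeping slightly more careful than the paper's.
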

\begin{remark}
Under the assumption of Theorem~\ref{thm:intro-main2}, define the nonnegative tensor
$E=\frac{\operatorname{Ric}}{n-1}-g$.
If $\sigma_j(E)$ denotes the $j$-th elementary symmetric function of the eigenvalues of $E$ with respect to $g$, with $\sigma_0(E)=1$, then \eqref{eq:intro-determinant-bound} in fact gives
$$
\frac{|N|_g}{|\mathbb S^n|}
\le
\frac{1}{|N|_g}
\int_N
\left(\sum_{j=0}^n\sigma_j(E)\right)^{-\frac12}
\, d\mathrm{vol}_g.
$$
Theorem~\ref{thm:intro-main2} follows by replacing the upper limit $n$ by $1$ in the sum.
\end{remark}
A pointwise scalar-curvature lower bound gives the following explicit consequence.

\begin{theorem}[Corollary~\ref{cor:uniform-scalar-bound}]
\label{thm:intro-main}
Let $(N^n, g)$ be a closed Riemannian manifold, where $n\ge 3$, and suppose that
$$
\operatorname{Ric}_g\ge (n-1)g,
\qquad
R_g\ge n(n-1)(1+\varepsilon)
$$
for some $\varepsilon\ge 0$. Then
\begin{equation}\label{eq:intro-main}
\frac{|N|_g}{|\mathbb S^n|}
\le
(1+n\varepsilon)^{-\frac12}.
\end{equation}
The equality holds if and only if $N$ is isometric to the unit sphere and $\varepsilon=0$.
\end{theorem}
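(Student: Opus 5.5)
This corollary follows directly from Theorem~\ref{thm:intro-main2}. We bound the integrand pointwise using the scalar curvature hypothesis, and then read off the equality case from the rigidity statement of Theorem~\ref{thm:intro-main2}.

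\textbf{Step 1: pointwise bound.} Since $R_g\ge n(n-1)(1+\varepsilon)$, at every point
$$
\frac{R_g}{n-1}-(n-1)\ \ge\ n(1+\varepsilon)-(n-1)\ =\ 1+n\varepsilon\ >0 .
$$
The integrand in Theorem~\ref{thm:intro-main2} is therefore well defined and satisfies
$$
\left(\frac{R_g}{n-1}-(n-1)\right)^{-\frac12}\le(1+n\varepsilon)^{-\frac12}.
$$
(We already know that $\frac{R_g}{n-1}-(n-1)\ge 1$ from $\operatorname{Ric}_g\ge(n-1)g$ alone, so positivity is automatic.)

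\textbf{Step 2: average.} Integrate this bound over $N$ and divide by $|N|_g$. The right-hand side of Theorem~\ref{thm:intro-main2} is then at most $(1+n\varepsilon)^{-1/2}$, which gives \eqref{eq:intro-main}.

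\textbf{Step 3: equality.} Suppose equality holds in \eqref{eq:intro-main}. Then both inequalities in the chain must be equalities:
$$
\frac{|N|_g}{|\mathbb S^n|}=\frac{1}{|N|_g}\int_N\left(\frac{R_g}{n-1}-(n-1)\right)^{-\frac12}d\mathrm{vol}_g=(1+n\varepsilon)^{-\frac12}.
$$
The first equality, together with the rigidity part of Theorem~\ref{thm:intro-main2}, shows that $N$ is isometric to the unit sphere. In particular $R_g\equiv n(n-1)$. The scalar curvature hypothesis then reads $n(n-1)\ge n(n-1)(1+\varepsilon)$, which forces $\varepsilon=0$.

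Conversely, if $N$ is the unit sphere and $\varepsilon=0$, then both sides of \eqref{eq:intro-main} equal $1$.

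The argument is essentially routine once Theorem~\ref{thm:intro-main2} is available. The only point needing care is the equality case: we must pass equality through the averaged pointwise bound and then invoke the rigidity statement of Theorem~\ref{thm:intro-main2}. This is immediate because both inequalities in the chain must be equalities simultaneously.
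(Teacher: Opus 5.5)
Your proposal is correct and follows the paper's proof, which likewise substitutes the pointwise bound $\frac{R_g}{n-1}-(n-1)\ge 1+n\varepsilon$ into Theorem~\ref{thm:main}. The paper leaves the equality case implicit, and your argument fills it in correctly: equality in the first step of the chain, together with the rigidity in Theorem~\ref{thm:main}, gives $R_g\equiv n(n-1)$, and the scalar curvature hypothesis then forces $\varepsilon=0$.
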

For the connection with Bray's conjecture, we should compare $\Lambda_n$ with $1+\varepsilon$. In contrast to Theorem~\ref{thm:Bray conjecture}, the estimate holds for any arbitrary $\varepsilon\ge 0$.
For $\varepsilon>0$, the right-hand side of \eqref{eq:intro-main} is strictly smaller than one. Thus Theorem~\ref{thm:intro-main} gives a strict improvement over Bishop's volume bound in every dimension. On the other hand, by Bernoulli's inequality, for $\varepsilon>0$,
$$
(1+n\varepsilon)^{-\frac12} > (1+\varepsilon)^{-\frac n2},
$$
so our estimate does not reach the volume factor \eqref{eq:bray-scaled} appearing in Bray's conjecture. Nevertheless, the difference between the two factors only first occurs at order $\varepsilon^2$.

The argument leading to Theorem~\ref{thm:intro-determinant} also yields the following comparison for the average volume of metric balls. It incorporates the scalar-curvature excess and improves the estimate obtained by averaging the Bishop--Gromov comparison over the centre of the ball.

\begin{theorem}[Theorem~\ref{thm:scalar-excess-metric-balls}]
\label{thm:intro-average-balls}
Let $(N^n, g)$ be a closed Riemannian manifold satisfying $\operatorname{Ric}_g\ge (n-1)g$, and define the average volume of metric balls by
$\overline V(r)=\fint_N |B_p(r)|\, d\mathrm{vol}_g(p)$.
Then, for every $0<r<\pi$,
$$
\overline V(r)
\le
\omega_{n-1}
\int_0^r
\sin^{n-1}t
\fint_N
\left[
1+\frac{1-t\cot t}{n}
\left(R_g(p)-n(n-1)\right)
\right]^{-\frac12}
\, d\mathrm{vol}_g(p)\, dt.
$$
\end{theorem}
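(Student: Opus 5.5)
We argue in polar coordinates around each point, integrate over the base point and the unit sphere, and use the Liouville invariance of the geodesic flow to convert the result into an integral of the Ricci curvature over $N$.

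\textbf{Volume in polar coordinates.} For $p\in N$ and a unit vector $v\in U_pN$, write $\mathcal A(t,v)$ for the Jacobian of $\exp_p$ along $\gamma_v$, extended by zero beyond the cut point. Then
$$|B_p(r)|=\int_0^r\int_{U_pN}\mathcal A(t,v)\,dv\,dt,$$
and averaging over $p$ gives
$$\overline V(r)=\frac{1}{|N|_g}\int_0^r\int_{UN}\mathcal A(t,v)\,d\mu\,dt,$$
where $\mu$ is the Liouville measure on the unit tangent bundle $UN$.

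\textbf{Coefficient-adapted comparison.} Along $\gamma_v$ we compare $\mathcal A^{1/(n-1)}$ with a scalar Jacobi solution. The standard Riccati argument gives $\mathcal A\le \sin^{n-1}t$. To retain the scalar excess, I would not use the crude bound $\operatorname{Ric}(\dot\gamma,\dot\gamma)\ge n-1$ along the whole geodesic. Instead I would use the coefficient $q(t)=\operatorname{Ric}(\dot\gamma(t),\dot\gamma(t))/(n-1)\ge 1$ in the Jacobi equation $f''+qf=0$, with $f(0)=0$ and $f'(0)=1$.

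\textbf{Shuffling comparison.} The shuffling lemma, in the spirit of Brown--Freedman, should give an integral comparison of the form
$$\mathcal A(t,v)\le \sin^{n-1}t\left[1+\int_0^t w_t(s)\,\big(q(s)-1\big)\,ds\right]^{-1/2}$$
(or a concave functional of it), where the weight $w_t$ comes from linearising the Jacobi equation around $\sin$. A first-order variation computation gives $f_\delta(t)\approx \sin t-\int_0^t \sin(t-s)\sin s\,\delta q(s)\,ds$. Normalising this suggests that the averaged weight satisfies
$$\frac1t\int_0^t w_t\;\propto\;\frac{1-t\cot t}{n}\cdot n(n-1).$$

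\textbf{Averaging via Liouville invariance.} Integrate over $UN$ and apply Jensen's inequality to the convex function $x\mapsto(1+x)^{-1/2}$, moving the $s$-average inside. Liouville invariance of the geodesic flow lets us replace $q(s)$ at $\gamma_v(s)$ by $q$ at the base point, so every time slice contributes the same distribution. Averaging $\operatorname{Ric}(v,v)$ over $U_pN$ gives $R_g(p)/n$. This produces
$$1+\frac{1-t\cot t}{n}\big(R_g(p)-n(n-1)\big)$$
inside the bracket, where the constant must be checked by integrating $\sin(t-s)\sin s/\sin t$ over $s\in[0,t]$.

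Jensen requires care. The convexity has to be applied first in $s$ along each geodesic and then over the sphere $U_pN$, while keeping the $p$-average outside the power $-\tfrac12$, as in the statement. This ordering needs convexity in the right variables. Beyond the cut point $\mathcal A=0$, which only helps.

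\textbf{Main obstacle.} The hard step is the shuffling comparison itself: an exact inequality, not merely first order, bounding the Jacobian by $\sin^{n-1}t$ times a $(\cdot)^{-1/2}$ factor of a weighted integral of the excess. I would first try a Sturm–Prüfer argument: compare the Wronskian of $f$ with $\sin$, then rearrange ("shuffle") the coefficient along $[0,t]$, using the fact that the resulting bound is a symmetric functional of $q$ that is invariant under the Liouville-preserving reparametrisation.
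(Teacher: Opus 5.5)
Your overall structure (polar coordinates, Riccati comparison along each geodesic with weight $\frac{\sin s\,\sin(t-s)}{\sin t}$, Liouville invariance) is reasonable. The step that produces the bracket, however, fails.

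\textbf{The spherical averaging step.} After Jensen in $s$ and Liouville invariance you are left with $\fint_{S_pN}[1+W(K(p,v)-1)]^{-1/2}\,d\theta_p(v)$. You then replace $K(p,v)=\operatorname{Ric}_p(v,v)/(n-1)$ by its spherical mean inside the bracket. Since $x\mapsto(1+x)^{-1/2}$ is convex, Jensen gives exactly the opposite inequality, and no reordering of the averages fixes this. The correct device is convexity in the eigenvalues $\lambda_i\ge0$ of $\operatorname{Ric}^\sharp/(n-1)-I$: a convex symmetric function on the simplex $\sum\lambda_i=S$ is maximised at a vertex, i.e.\ at rank-one excess. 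Even this cannot rescue a per-geodesic factor of the form $[1+\int_0^t w_t(q-1)\,ds]^{-1/2}$. In the rank-one case you would need $\fint_{\mathbb S^{n-1}}(1+y\theta_1^2)^{-1/2}\,d\theta\le(1+y/n)^{-1/2}$, which is false for every $y>0$ by strict Jensen.

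Such a per-geodesic bound is in fact true, because it follows from the exponential Riccati bound via $e^{-y}\le(1+2y)^{-1/2}$. But it has already lost too much. The power $-\frac12$ in the statement is the output of an exact spherical integration, not a factor along individual geodesics. Relatedly, the shuffling comparison cannot give a pointwise bound on $\mathcal A(t,v)$ at all. It compares sums over a family whose coefficient multiset is time-invariant, so it only holds after integrating over $SN$.

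\textbf{The paper's route.} The paper first uses $\widehat F_z\le\widehat y_z^{\,n-1}$ (Proposition~\ref{prop:adapted}). Theorem~\ref{thm:continuum-shuffle} then gives $\int_{SN}\widehat y_z(t)^{n-1}dm\le\int_{SN}\widehat s_{K(z)}(t)^{n-1}dm$. Next comes the constant-coefficient bound $\widehat s_{K(z)}(t)^{n-1}\le\sin^{n-1}t\,e^{-(n-1)\tau_1(t)(K(z)-1)}$, with $\tau_1(t)=\frac{1-t\cot t}{2}$ (Proposition~\ref{prop:K-fixed-model}). Only then is the sphere averaged. With $L=(n-1)\tau_1(t)(\operatorname{Ric}^\sharp/(n-1)-I)$, the vertex argument gives $\fint e^{-\langle L\theta,\theta\rangle}\le\mathcal F_n(\operatorname{tr}L)$. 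Then $e^{-xu^2}\le(1+\frac{2x}{n}u^2)^{-n/2}$, together with the exact identity $\fint\langle L\theta,\theta\rangle^{-n/2}=(\det L)^{-1/2}$, gives $\mathcal F_n(x)\le(1+\frac{2x}{n})^{-1/2}$. Since $\operatorname{tr}L=\tau_1(t)(R_g-n(n-1))$, this is exactly the stated bracket.

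\textbf{Repairing your outline.} The ``main obstacle'' you name becomes easy once you keep the exponential form. The Riccati argument with integrating factor $\sin^2$ (Proposition~\ref{prop:fixed-model}) gives, for $0<t<\pi$,
\[
\widehat F_z(t)\le\sin^{n-1}t\,\exp\Bigl(-(n-1)\int_0^t\frac{\sin s\,\sin(t-s)}{\sin t}\bigl(K(G_sz)-1\bigr)\,ds\Bigr).
\]
This uses precisely your linearisation weight, whose integral is $\tau_1(t)$. Jensen in $s$ for $\exp$ and Liouville invariance then yield the paper's intermediate bound $\int_{SN}\widehat F_z(t)\,dm\le\sin^{n-1}t\int_{SN}e^{-(n-1)\tau_1(t)(K-1)}\,dm$, and the sphere lemma finishes. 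Your outline, repaired this way, would even avoid the shuffling step. As written, with a $(\cdot)^{-1/2}$ factor per geodesic and Jensen over $S_pN$, it does not prove the statement.
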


Note that $1+\frac{1-t\cot t}{n}\left(R_g(p)-n(n-1)\right)>1$ for $0<t<\pi$ if $R_g(p)>n(n-1)$.

We now describe the main ingredients of the proof. The first is a coefficient-adapted Jacobian comparison, which is implicitly stated in \cite{BrownFreedman2022}. Let $z=(p, v)\in SN$, the unit tangent bundle. Let $\gamma_z$ be the geodesic with initial velocity $v$, and let $F_z(t)$ be its polar Jacobian. Consider the scalar Jacobi equation
$$
j_z''(t) + \frac{\operatorname{Ric}_{\gamma_z(t)} (\gamma_z'(t), \gamma_z'(t))}{n-1}\, j_z(t) =0,
\qquad
j_z(0)=0,
\qquad
j_z'(0)=1.
$$
Up to the first conjugate time along $\gamma_z$, one has
\begin{equation}\label{eq:intro-jacobian-comparison}
F_z(t)\le j_z(t)^{n-1}.
\end{equation}
This comparison does not require any curvature lower bound.

The second ingredient is an integral Jacobi comparison under a measure-preserving flow, inspired by the shuffling principle of Brown and Freedman \cite{BrownFreedman2022}. For a solution of an ordinary differential equation on $[0, \infty)$ with prescribed initial data, we define its stopped solution to agree with the original solution up to its first positive zero, if one exists, and to be identically zero thereafter. Starting from their finite-family comparison and passing to an integral formulation, we obtain the following version for measure-preserving flows.

\begin{theorem}[Theorem~\ref{thm:continuum-shuffle}]
\label{thm:intro-integral-jacobi}
Let $(X, m)$ be a compact metric space equipped with a Borel probability measure $m$, let $K$ be a continuous function on $X$, and let $\{\Phi_t\}_{0\le t\le T}$ be a continuous measure-preserving flow on $X$. For each $z\in X$, let $j_z$ be the stopped solution of
$$
j_z''(t)+K(\Phi_tz)j_z(t)=0,
\qquad
j_z(0)=0,
\qquad
j_z'(0)=1.
$$
Then, for every positive integer $\mu$,
$$
\int_X j_z(T)^\mu\, dm(z)
\le
\int_X\widehat{s}_{K(z)}(T)^\mu\, dm(z),
$$
where $\widehat{s}_a$ is the stopped solution of
$$
s''+as=0,
\qquad
s(0)=0,
\qquad
s'(0)=1.
$$
\end{theorem}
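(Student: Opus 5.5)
The plan is to derive the statement from the finite-family shuffling comparison of Brown and Freedman \cite{BrownFreedman2022} by a two-stage discretization (first in time, then in space) and then pass to the limit. As I understand that lemma, it says the following. Take finitely many equally weighted trajectories with piecewise-constant coefficients $a_i(t)$, $i=1,\dots,N$, constant on each interval $[t_k,t_{k+1})$ of a partition of $[0,T]$. Suppose that on every interval the multiset $\{a_i(t_k)\}_i$ is a permutation of the initial multiset $\{a_i(0)\}_i$. Then $\sum_i j_i(T)^\mu\le\sum_i \widehat s_{a_i(0)}(T)^\mu$ for every positive integer $\mu$. The continuum statement is the natural integral analogue. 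The flow enters only through the fact that, for each fixed $t$, the law of $K\circ\Phi_t$ under $m$ equals the law of $K$, which holds because $\Phi_t$ is measure preserving.

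\emph{Step 1 (continuity of stopped solutions).} First I will show that the map sending a bounded coefficient path $a(\cdot)$ on $[0,T]$, with the $L^1$ or sup topology, to the stopped value $\widehat j_a(T)$ is continuous. Before the first zero this is standard continuous dependence for linear ODEs. The stopping rule does not spoil continuity for two reasons.
\begin{itemize}
\item A nontrivial solution cannot have a double zero, since $j=j'=0$ forces $j\equiv0$. Hence the first zero $\tau(a)$ is transversal and depends continuously on $a$.
\item If $\tau(a)$ is near $T$, then $\widehat j_a(T)$ is close to $0$ on either side.
\end{itemize}
Since $K$ is continuous on the compact space $X$, it is bounded, so all the quantities involved are uniformly bounded. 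Dominated convergence therefore lets me replace $K(\Phi_t z)$ by the time-discretized coefficient $K(\Phi_{t_k}z)$ on $[t_k,t_{k+1})$, with mesh $h=T/M$. The error in $\int_X j_z(T)^\mu\,dm$ tends to $0$ as $M\to\infty$, using uniform continuity of $(t,z)\mapsto K(\Phi_tz)$ on $[0,T]\times X$.

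\emph{Step 2 (exact finite marginals).} Consider the random vector $Y(z)=(K(z),K(\Phi_{t_1}z),\dots,K(\Phi_{t_{M-1}}z))\in\mathbb R^M$. Its law $\nu$ has all one-dimensional marginals equal to the law $\lambda$ of $K$. I will approximate $\nu$ by a uniform empirical measure $\frac1N\sum_{i=1}^N\delta_{y^{(i)}}$ with the property that every coordinate multiset $\{y^{(i)}_k\}_i$ is the same multiset $Q_N$. The construction goes as follows.
\begin{itemize}
\item Quantize $\lambda$ into $N$ equal-mass quantiles.
\item Quantize each coordinate of $Y$ by its rank among these quantiles.
\item Realize the resulting coupling on $N$ atoms, adjusting a small fraction of atoms so that the coordinate marginals match $Q_N$ exactly.
\end{itemize}
This can be done with $\nu_N\to\nu$ weakly and $Q_N\to\lambda$ weakly. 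The finite Brown--Freedman lemma then applies directly to the $N$ trajectories $y^{(i)}$ and gives $\frac1N\sum_i \widehat j_{y^{(i)}}(T)^\mu\le\frac1N\sum_{q\in Q_N}\widehat s_q(T)^\mu$.

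\emph{Step 3 (limit).} By Step 1, the map $y\mapsto \widehat j_y(T)^\mu$ is bounded and continuous on $\mathbb R^M$, where $y$ is read as a piecewise-constant path. Likewise $a\mapsto\widehat s_a(T)^\mu$ is bounded and continuous. Letting $N\to\infty$ with weak convergence yields
$$\int_X \widehat j^{(M)}_z(T)^\mu\,dm\le\int_X \widehat s_{K(z)}(T)^\mu\,dm$$
for the time-discretized problem. Letting $M\to\infty$ via Step 1 gives the theorem.

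I expect the main obstacle to be Step 2, namely producing the discrete approximations with exactly permuted coordinate multisets, as the finite lemma requires, while keeping weak convergence. The rounding must be done carefully, for example by a Birkhoff--von Neumann or rank-matching argument applied to the quantized coupling, so that the correction affects only $o(N)$ atoms.
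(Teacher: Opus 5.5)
Your architecture is sound and matches the paper's. You discretize time and use measure preservation only through the equality of the one-dimensional laws of $K\circ\Phi_{t_k}$. You then reduce to a finite family to which Proposition~\ref{prop:finite-BF} applies, and pass to the limit using continuity of the stopped value; your Step~1 is Lemma~\ref{lem:stopped-continuity}.

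The gap is Step~2. It is the only non-routine step, you assert it rather than prove it, and the mechanism you sketch does not work as described. If each coordinate is quantized into $N$ quantile cells, the quantized coupling lives on $N^M$ cells. A typical cell then carries mass far below $1/N$; for instance, each cell has mass $N^{-M}$ if the coordinates of $Y$ were independent. So there is no ``rounding with $o(N)$ corrections'' to perform. Realizing this coupling on $N$ atoms means selecting $N$ cells out of $N^M$. Showing that such a selection can be made weakly close to $\nu$ \emph{with exactly permuted coordinates} is the whole content of your claim. Birkhoff--von Neumann does not supply it, since it concerns two marginals and you need $M$ simultaneously.

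There are two ways to close the gap.

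(a) A correct rank-matching argument. Start from any $N$ points $y^{(i)}$ in the support of $\nu$ whose empirical measure converges weakly to $\nu$ (for example, a typical i.i.d.\ sample). In each coordinate $k$, replace the $r$-th smallest entry by the $r$-th element of $Q_N$. Every atom may move, but only slightly on average. The average displacement in coordinate $k$ equals the Wasserstein distance $W_1$ between the $k$-th empirical marginal and the uniform measure on $Q_N$, since monotone matching is optimal on the line. This tends to $0$ because both measures converge weakly to $\lambda$ on the compact interval $[\min K,\max K]$. Hence the modified empirical measures still converge weakly to $\nu$, every coordinate multiset is now exactly $Q_N$, and your Step~3 goes through as written.

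(b) The paper's route, which avoids empirical measures and weak convergence. First replace $K$ by a uniformly close finite-valued $K_{\mathrm N}$. The joint law of the discretized coefficient vector is then a weight vector $(\lambda_\alpha)$ on a \emph{fixed} finite grid. The equal-marginal condition becomes the homogeneous, integer-coefficient balancing identities \eqref{eq:balancing-identities}. Rational points are dense in the polytope these identities cut out, so the $\lambda_\alpha$ can be approximated by rational weights satisfying the identities exactly. Clearing denominators gives a finite family with multiplicities, to which Brown--Freedman applies. The rational approximation is then removed inside a finite sum (Lemma~\ref{lem:finite-valued-integral}). Only the outer limit, $K_{\mathrm N}\to K$ and mesh $\to 0$, uses the continuity lemma and dominated convergence.

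The idea missing from your sketch is to keep the value grid fixed while the number of atoms grows. Exact rounding then reduces to rational approximation in a rational linear subspace.
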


To apply the preceding result, we use a third ingredient: Liouville's theorem for the geodesic flow. Recall that, for $z=(p, v)\in SN$, the geodesic flow $G_t\colon SN\to SN$ is given by
$$
G_tz=\left(\gamma_z(t), \gamma_z'(t)\right).
$$
Let $m$ be the normalised Liouville measure on $SN$. Liouville's theorem \cite[Corollary~1.31 and Exercise~1.33]{Paternain} states that
$$
(G_t)_*m=m
$$
for every $t\in\mathbb R$. This invariance is also used in \cite{Kwong2025} to pass from estimates along individual geodesics to averaged estimates over the unit tangent bundle.

Define
\begin{equation}\label{eq:K}
K(p, v)=\frac{\operatorname{Ric}_p(v, v)}{n-1}.
\end{equation}
Applying Theorem~\ref{thm:intro-integral-jacobi} to $(SN, m)$ with $\Phi_t=G_t$ and coefficient function $K$, and then combining it with \eqref{eq:intro-jacobian-comparison}, reduces the volume estimate to the model scalar Jacobi solutions $\widehat s_{K(z)}$. Explicit integration of these model solutions gives the inequality in \eqref{eq:intro-determinant-bound}.

Jiang, Li, and Wang~\cite{JiangLiWang} observed that Theorem~\ref{thm:Bray conjecture} can be applied to obtain a volume comparison theorem under a pointwise lower bound on the Branson $Q$-curvature. Their argument applies the maximum principle to the defining equation for $Q_g$, thereby obtaining a pointwise scalar-curvature lower bound. Our scalar-curvature integral estimate yields a different formulation that requires only a weighted integral lower bound on $Q_g$, rather than a pointwise lower bound. This is done by using an integration-by-parts argument, which reduces the problem to Theorem~\ref{thm:intro-main2}; see Theorem~\ref{thm:integral Q curvature}.

The remainder of the paper is organized as follows. In Section~\ref{sec:some scalar}, we establish several scalar comparison formulas through the analysis of Riccati-type equations. In Section~\ref{sec: integral comp}, we extend the finite shuffling comparison for stopped Jacobi solutions to the integral setting of a measure-preserving flow. Finally, in Section~\ref{sec:geometric applications}, we apply this integral comparison to obtain estimates for the average areas and volumes of metric spheres and balls, the determinant and scalar-curvature volume estimates, and a volume comparison theorem under a weighted integral condition on the $Q$-curvature.

\begin{ack}
We would like to thank Michael Freedman for explaining the shuffling principle and Xu-Qian Fan and Miles Simon for helpful discussions.
\end{ack}

\section{ Some scalar comparisons using Riccati analysis}\label{sec:some scalar}

We begin by introducing the notation and establishing several comparison formulas through the analysis of Riccati-type equations. These formulas will be used later. Proposition~\ref{prop:fixed-model} is not needed for the proof of the main result, but may be of independent interest. No curvature bound is assumed in these comparisons. Standard facts about Jacobi tensors, cut points, and Riccati analysis may be found in \cite{BishopCrittenden, Petersen}.

For $k\in\mathbb R$, define
\begin{equation*}\label{not:sk}
s_k(t)=
\begin{cases}
\dfrac{\sin(\sqrt{k}\, t)}{\sqrt{k}}, & k>0, \\
t, & k=0, \\
\dfrac{\sinh(\sqrt{-k}\, t)}{\sqrt{-k}}, & k<0,
\end{cases}
\qquad
F_k(t)=s_k(t)^{n-1},
\end{equation*}
and
$$\operatorname{Ric}_k=\operatorname{Ric}-(n-1)kg. $$

If $z=(p, v)\in SN$, let $\gamma_z$ be the unit speed geodesic with initial vector $v$ starting from $p$. Let $J_z(t):v^\perp\to \gamma_z'(t)^\perp$ be the normal Jacobi tensor with $J_z(0)=0$ and $J_z'(0)=I$. Equivalently, for each $u \in v^{\perp}$, let $Y_u$ be the Jacobi field satisfying
$$
Y_u(0)=0, \quad \nabla_{v} Y_u(0)=u.
$$
The normal Jacobi tensor $J_z(t)$ is defined by
$$ J_z(t) u=Y_u(t). $$
Before the first conjugate point, set
$$
F_z(t)=\det J_z(t),
\qquad
A_z(t)=J_z'(t)J_z(t)^{-1},
\qquad
H_z(t)=\operatorname{tr}A_z(t).
$$
Thus $F_z$ is the polar Jacobian, $A_z(t)$ is the shape operator of the geodesic sphere of radius $t$ at the point $\exp_p(tv)$ when $t$ is small, and $H_z=(\log F_z)'$ is the mean curvature. The traceless part of $A_z$ is denoted by $\stackrel{\circ}{A}_z$.

Let $c(z)$ be the cut time of $\gamma_z$. We let
$$\widehat F_z(t)=F_z(t)\mathbf 1_{[0, c(z))}(t). $$
Let $d\theta_p$ denote the usual measure on $S_pN$, and let $\omega_{n-1}=\lvert\mathbb S^{n-1}\rvert$. We normalize the Liouville measure by
\begin{equation}\label{eq:normalized-liouville}
dm(p, v)=\frac{1}{\lvert N\rvert\omega_{n-1}}\, d\mathrm{vol}_g(p)d\theta_p(v).
\end{equation}
Thus $m$ is a probability measure, i.e. $m(SN)=1$.
\begin{lemma}\label{lem:polar-volume}
For a closed Riemannian manifold,
\begin{equation}\label{eq:polar-volume}
\lvert N\rvert
=\omega_{n-1}\int_{SN}\int_0^{c(z)}F_z(t)\, dt\, dm(z)
=\omega_{n-1}\int_0^\infty\int_{SN}\widehat F_z(t)\, dm(z)\, dt.
\end{equation}
\end{lemma}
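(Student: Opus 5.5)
The plan is to compute the volume pointwise in polar coordinates around each point, then average over $p\in N$ and rewrite that average through the Liouville measure $m$. Fix $p\in N$. The exponential map $\exp_p$ restricted to the star-shaped domain
$$U_p=\{tv: v\in S_pN,\ 0\le t<c(p,v)\}$$
is a diffeomorphism onto $N\setminus \mathrm{Cut}(p)$. Moreover, $\mathrm{Cut}(p)$ has measure zero, since it is the image of the Lipschitz boundary of $U_p$ (the cut time function is continuous). In geodesic polar coordinates the pulled-back volume form is $F_{(p,v)}(t)\,dt\,d\theta_p(v)$, because $\det J_z(t)$ is exactly the Jacobian of $d\exp_p$ in these coordinates. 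The identity $|\det J_z(t)|=\det J_z(t)=F_z(t)$ holds on $[0,c(z))$: there are no conjugate points before the cut point, so $J_z$ is invertible there and $F_z>0$. Hence, for every $p$,
$$|N|=\int_{S_pN}\int_0^{c(p,v)}F_{(p,v)}(t)\,dt\,d\theta_p(v).$$

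Next integrate this identity over $p\in N$ with respect to $d\mathrm{vol}_g(p)$. This gives
$$|N|^2=\int_N\int_{S_pN}\int_0^{c(z)}F_z(t)\,dt\,d\theta_p(v)\,d\mathrm{vol}_g(p).$$
Dividing by $|N|$ and using the normalization \eqref{eq:normalized-liouville}, namely $d\mathrm{vol}_g\,d\theta_p=|N|\omega_{n-1}\,dm$, yields the first equality in \eqref{eq:polar-volume}.

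The second equality follows by writing $\int_0^{c(z)}F_z\,dt=\int_0^\infty \widehat F_z\,dt$ and applying Tonelli's theorem, since the integrand is nonnegative. For this one needs joint measurability of $(z,t)\mapsto \widehat F_z(t)$ on $SN\times[0,\infty)$. This holds because $F_z(t)$ depends smoothly on $(z,t)$, $c$ is continuous on $SN$, and the set $\{t<c(z)\}$ is therefore open. Note also that $c(z)\le \operatorname{diam}N<\infty$, so all integrals are finite.

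The only genuinely nontrivial inputs are the standard facts that the cut locus has measure zero and that $c$ is continuous. I would quote these from the cited references \cite{BishopCrittenden, Petersen} rather than reprove them; this is the main point requiring care.
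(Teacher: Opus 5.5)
Your proposal is correct and follows the paper's proof: geodesic polar coordinates about each $p$ up to the cut locus, integration over $p$ with the normalization of $m$, and Tonelli for the second equality. You additionally supply standard justifications the paper leaves implicit: the cut locus is null, there are no conjugate points before the cut point, and $(z,t)\mapsto\widehat F_z(t)$ is jointly measurable.

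One minor correction: continuity of $c$ does not by itself make $\partial U_p$ Lipschitz. It does, however, make $\partial U_p$ a graph over $S_pN$ in polar coordinates, hence Lebesgue-null by Fubini, and the smooth map $\exp_p$ sends null sets to null sets, which is all you need.
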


\begin{proof}
For each fixed $p$, geodesic polar coordinates up to the cut locus give
$$
\lvert N\rvert
=\int_{S_pN}\int_0^{c(p, v)}F_{(p, v)}(t)\, dt\, d\theta_p(v).
$$
Integrate this identity over $p\in N$, divide by $\lvert N\rvert$, and use \eqref{eq:normalized-liouville}. This proves the first equality in \eqref{eq:polar-volume}. The second follows from Fubini-Tonelli's theorem and the definition of $\widehat F_z$.
\end{proof}
The following result generalizes \cite[Proposition 2.1]{Kwong2025}.
\begin{proposition}\label{prop:fixed-model}
Suppose that $s_k>0$ on $(0, r]$ and that $r$ is smaller than the first conjugate time along $\gamma_z$. Define
$$h_z(t)=H_z(t)-(n-1)\frac{s_k'(t)}{s_k(t)}. $$
Then
\begin{equation} \label{eq:exact-fixed}
\log\frac{F_z(r)}{F_k(r)}
=-\int_0^r\int_0^\tau \frac{s_k(t)^2}{s_k(\tau)^2} \left[\operatorname{Ric}_k(\gamma_z'(t), \gamma_z'(t)) +\lvert\stackrel\circ{A}_z(t)\rvert^2 +\frac{h_z(t)^2}{n-1} \right] \, dt\, d\tau.
\end{equation}
In particular,
\begin{equation}\label{eq:kwong-jacobian}
F_z(r)\le F_k(r) \exp\left[-\int_0^r\int_0^\tau \frac{s_k(t)^2}{s_k(\tau)^2} \operatorname{Ric}_k(\gamma_z'(t), \gamma_z'(t)) \, dt\, d\tau\right].
\end{equation}
\end{proposition}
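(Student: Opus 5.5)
The plan is to derive a scalar Riccati equation for $h_z$ and then integrate it twice against the integrating factor $s_k^2$.

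\textbf{Step 1: the Riccati equation.} Since $J_z$ is a Jacobi tensor, the shape operator $A_z$ satisfies the matrix Riccati equation
$$A_z'+A_z^2+\mathcal R_z=0,$$
where $\mathcal R_z(t)=R(\cdot,\gamma_z')\gamma_z'$ acts on $\gamma_z'(t)^\perp$. Taking traces gives $H_z'+\operatorname{tr}(A_z^2)+\operatorname{Ric}(\gamma_z',\gamma_z')=0$.

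Split $A_z$ into its traceless and trace parts:
$$\operatorname{tr}(A_z^2)=\lvert\stackrel\circ{A}_z\rvert^2+\frac{H_z^2}{n-1}.$$

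Put $\phi=s_k'/s_k$. Since $s_k''=-ks_k$, we have $\phi'+\phi^2+k=0$. Subtract $(n-1)$ times this from the traced equation and write $H_z=h_z+(n-1)\phi$. The cross terms give
$$\frac{H_z^2}{n-1}-(n-1)\phi^2=\frac{h_z^2}{n-1}+2\phi h_z,$$
and therefore
$$h_z'+2\phi h_z=-Q, \qquad Q:=\operatorname{Ric}_k(\gamma_z',\gamma_z')+\lvert\stackrel\circ{A}_z\rvert^2+\frac{h_z^2}{n-1}.$$

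\textbf{Step 2: integrating factor.} Multiply by $s_k^2$. Since $(s_k^2)'=2\phi s_k^2$, this becomes $(s_k^2h_z)'=-s_k^2Q$.

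\textbf{Step 3: behaviour at $t=0$.} Near $t=0$ we have $J_z=tI+O(t^3)$, so $H_z=\frac{n-1}{t}+O(t)$. Similarly $(n-1)\phi=\frac{n-1}{t}+O(t)$. Hence $h_z=O(t)$, so $s_k^2h_z\to0$ as $t\to0$, and $Q$ is bounded near $0$. Integrating from $0$ to $\tau$ then gives
$$h_z(\tau)=-\int_0^\tau\frac{s_k(t)^2}{s_k(\tau)^2}Q(t)\,dt.$$
This requires $s_k>0$ on $(0,r]$, which is assumed. Also, $A_z$ is smooth on $(0,r]$ because $r$ is before the first conjugate time.

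\textbf{Step 4: second integration.} We have $h_z=(\log F_z)'-(\log F_k)'=\bigl(\log(F_z/F_k)\bigr)'$. Moreover $F_z/F_k\to1$ as $t\to0$, because both behave like $t^{n-1}(1+O(t^2))$. Integrating over $\tau\in(0,r]$ yields \eqref{eq:exact-fixed}.

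\textbf{Step 5: the inequality.} Both $\lvert\stackrel\circ{A}_z\rvert^2$ and $h_z^2/(n-1)$ are nonnegative, so dropping them from the integrand gives \eqref{eq:kwong-jacobian}.

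The main obstacle is the careful boundary analysis at $t=0$ in Steps 3 and 4. One has to check that the singular terms cancel: that $h_z$ is bounded near $0$, that $s_k^2h_z\to0$, and that $\log(F_z/F_k)\to0$. Each follows from the Taylor expansion of $J_z$. The remaining steps are algebraic identities.
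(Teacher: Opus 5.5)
Your proposal is correct and follows essentially the same route as the paper. Both trace the Riccati equation, subtract the model equation for $(n-1)s_k'/s_k$ to obtain $(s_k^2h_z)'=-s_k^2\bigl(\operatorname{Ric}_k+\lvert\stackrel\circ{A}_z\rvert^2+h_z^2/(n-1)\bigr)$, integrate twice using $h_z=O(t)$ and $F_z/F_k\to1$ as $t\downarrow0$, and then drop the two nonnegative terms to get \eqref{eq:kwong-jacobian}.
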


\begin{proof}
Taking the trace of the Riccati equation $A_z'+A_z^2+\mathcal R_{\gamma_z'}=0$ gives $H_z'+\lvert A_z\rvert^2+\operatorname{Ric}(\gamma_z', \gamma_z')=0$, and $\lvert A_z\rvert^2=\frac{H_z^2}{ n-1 }+ \lvert\stackrel{\circ}{A}_z\rvert^2$, where $\mathcal R_{\gamma_z'}:=R(\cdot, \gamma_z')\gamma_z'$. Hence
\begin{equation}\label{eq:trace-riccati}
H_z'+\frac{H_z^2}{n-1} +\lvert\stackrel{\circ}{A}_z\rvert^2 +\operatorname{Ric}(\gamma_z', \gamma_z')=0.
\end{equation}
The function $H_k:=(n-1)\frac{s_k'}{s_k}$ satisfies
$$H_k'+\frac{H_k^2}{n-1}+(n-1)k=0. $$
Subtract this equation from \eqref{eq:trace-riccati}. Since $H_z=H_k+h_z$, we obtain
\begin{equation*}\label{eq:h-equation}
h_z'+2\frac{s_k'}{s_k}h_z+\frac{h_z^2}{n-1} +\lvert\stackrel{\circ}{A}_z\rvert^2 +\operatorname{Ric}_k(\gamma_z', \gamma_z')=0.
\end{equation*}
Multiplication by the integrating factor $s_k^2$ gives
$$
(s_k^2h_z)'=-s_k^2 \left(\operatorname{Ric}_k(\gamma_z', \gamma_z') +\lvert\stackrel{\circ}{A}_z\rvert^2 +\frac{h_z^2}{n-1} \right).
$$
The expansions $F_z(t)=t^{n-1}(1+O(t^2))$ and $s_k(t)=t+O(t^3)$ show that $s_k(t)^2h_z(t)\to0$ as $t\downarrow0$. Integrating first from $0$ to $\tau$, we have
\begin{align*}
s_k(\tau)^2h_z(\tau)^2=-\int_{0}^{\tau}s_k(t)^2\left(\operatorname{Ric}_k\left(\gamma_z^{\prime}(t), \gamma_z^{\prime}(t)\right)+\left|\stackrel{\circ}{A}_z(t)\right|^2+\frac{h_z(t)^2}{n-1}\right)dt.
\end{align*}
Divide both sides by $s_k(\rho)$, using
$$\left(\log\frac{F_z}{F_k}\right)'=h_z
$$
and integrating from $0$ to $r$, we obtain \eqref{eq:exact-fixed}. Dropping the two nonnegative terms gives \eqref{eq:kwong-jacobian}.
\end{proof}

\begin{definition}\label{def:scalar-jacobi}
For $z\in SN$, let $y_z$ solve
\begin{equation}\label{eq:scalar-jacobi}
y_z''(t)+\frac{1}{n-1} \operatorname{Ric}(\gamma_z'(t), \gamma_z'(t))y_z(t)=0, \qquad y_z(0)=0, \qquad y_z'(0)=1.
\end{equation}
Let $r(z)$ be its first positive zero. We define
$$
\widehat y_z(t)=
\begin{cases}
y_z(t), & 0\le t<r(z), \\
0, & t\ge r(z).
\end{cases}
$$
\end{definition}
Similar to Proposition \ref{prop:fixed-model}, we can also compare $F_z(r)$ with $y_z(r)^{n-1}$ as follows.
\begin{proposition}\label{prop:adapted}
Before the first conjugate point and the first zero of $y_z$, if
$$
d_z(t):= H_z(t)-(n-1)\frac{y_z'(t)}{y_z(t)},
$$
then
\begin{align}
\label{eq:exact-shear}
\log\frac{F_z(r)}{y_z(r)^{n-1}}
=& -\int_0^r\int_0^\tau \frac{y_z(t)^2}{y_z(\tau)^2} \left(\left|\stackrel{\circ}{A}_z(t)\right|^2 +\frac{d_z(t)^2}{n-1} \right) \, dt\, d\tau
\end{align}
In particular,
\begin{equation}\label{eq:adapted-first}
F_z(r)\le y_z(r)^{n-1}.
\end{equation}
The equality holds only if $\stackrel \circ A_z(t)=0$ for $0<t<r$.

If, in addition, $s_k>0$ on $(0, r]$, then
\begin{align} \label{eq:comparison-hierarchy}
F_z(r) \le y_z(r)^{n-1}
\le & F_k(r) \exp\left[-\int_0^r\int_0^\tau \frac{s_k(t)^2}{s_k(\tau)^2} \operatorname{Ric}_k \left(\gamma_z'(t), \gamma_z'(t)\right) \, dt\, d\tau \right].
\end{align}
\end{proposition}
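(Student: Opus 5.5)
We follow the proof of Proposition~\ref{prop:fixed-model} step by step, with the fixed model $s_k$ replaced by the coefficient-adapted solution $y_z$. Write $K_z(t)=\frac{1}{n-1}\operatorname{Ric}(\gamma_z'(t),\gamma_z'(t))$ and set $Y_z=(n-1)\frac{y_z'}{y_z}$ on the interval where $y_z>0$. From \eqref{eq:scalar-jacobi},
$$
Y_z'+\frac{Y_z^2}{n-1}+\operatorname{Ric}(\gamma_z',\gamma_z')=0 .
$$
The curvature term here is exactly the one appearing in the traced Riccati equation \eqref{eq:trace-riccati}. Subtracting, and using $H_z=Y_z+d_z$, gives
$$
d_z'+2\frac{y_z'}{y_z}d_z+\frac{d_z^2}{n-1}+\lvert\stackrel{\circ}{A}_z\rvert^2=0 .
$$
The Ricci term cancels completely; this is the point of the adapted comparison. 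Multiplying by the integrating factor $y_z^2$ yields
$$
(y_z^2d_z)'=-y_z^2\Bigl(\lvert\stackrel{\circ}{A}_z\rvert^2+\frac{d_z^2}{n-1}\Bigr).
$$

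Next we treat the behaviour at $t=0$. Since $y_z(t)=t+O(t^3)$, we have $(n-1)\frac{y_z'}{y_z}=\frac{n-1}{t}+O(t)$. Likewise $H_z=\frac{n-1}{t}+O(t)$, by the expansion $F_z(t)=t^{n-1}(1+O(t^2))$. Hence $d_z=O(t)$ and $y_z^2d_z\to0$ as $t\downarrow0$. Integrating from $0$ to $\tau$ and dividing by $y_z(\tau)^2$ gives
$$
d_z(\tau)=-\int_0^\tau\frac{y_z(t)^2}{y_z(\tau)^2}\Bigl(\lvert\stackrel{\circ}{A}_z(t)\rvert^2+\frac{d_z(t)^2}{n-1}\Bigr)dt .
$$
Since $d_z=(\log F_z-(n-1)\log y_z)'$ and $\log\frac{F_z(t)}{y_z(t)^{n-1}}\to0$ as $t\to0$, integrating once more from $0$ to $r$ gives \eqref{eq:exact-shear}. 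The integrand is nonnegative, so \eqref{eq:adapted-first} follows.

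For the equality statement, suppose equality holds in \eqref{eq:adapted-first}. Then the double integral vanishes. By continuity and the positivity of $y_z$ on $(0,r]$, this forces $\stackrel{\circ}{A}_z\equiv0$ (and also $d_z\equiv0$) on $(0,r)$.

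For \eqref{eq:comparison-hierarchy}, it remains to bound $y_z(r)^{n-1}$. This is the one-dimensional case of Proposition~\ref{prop:fixed-model}, and it is the main step needing care. Let $u=\frac{y_z'}{y_z}-\frac{s_k'}{s_k}$. The Riccati equations $\bigl(\frac{y_z'}{y_z}\bigr)'+\bigl(\frac{y_z'}{y_z}\bigr)^2+K_z=0$ and $\bigl(\frac{s_k'}{s_k}\bigr)'+\bigl(\frac{s_k'}{s_k}\bigr)^2+k=0$ give
$$
(s_k^2u)'=-s_k^2\bigl(u^2+K_z-k\bigr).
$$
We have $s_k^2u\to0$ at $0$, and we assume $s_k>0$ on $(0,r]$. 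Integrating twice exactly as before gives
$$
\log\frac{y_z(r)}{s_k(r)}\le-\int_0^r\int_0^\tau\frac{s_k(t)^2}{s_k(\tau)^2}\bigl(K_z(t)-k\bigr)\,dt\,d\tau .
$$
Multiplying by $n-1$, using $(n-1)(K_z-k)=\operatorname{Ric}_k(\gamma_z',\gamma_z')$, and exponentiating gives the second inequality in \eqref{eq:comparison-hierarchy}.

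All of the above steps require $y_z$ to be positive, which the hypotheses supply: before the first zero of $y_z$ we have $y_z>0$ on $(0,r]$, and we work before the first conjugate point throughout.
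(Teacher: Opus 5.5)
Your proposal is correct and follows the paper's proof essentially step by step. It subtracts the Riccati equation for $(n-1)y_z'/y_z$ from the traced Riccati equation, uses the integrating factor $y_z^2$ and integrates twice; for the hierarchy it compares $y_z$ with $s_k$ in the same way, your $u$ being the paper's $e_z$ divided by $n-1$. Your justification of the equality clause, via the vanishing of a nonnegative continuous integrand, spells out a detail the paper leaves implicit.
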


\begin{proof}
Put
$$H_y=(n-1)\frac{y_z'}{y_z}. $$
Equation \eqref{eq:scalar-jacobi} gives
$$
H_y' +\frac{H_y^2}{n-1} +\operatorname{Ric}(\gamma_z', \gamma_z') =0.
$$
Subtracting this equation from \eqref{eq:trace-riccati}, and recalling that $d_z=H_z-H_y$, gives
$$
d_z' +2\frac{y_z'}{y_z}d_z +\frac{d_z^2}{n-1} +\left|\stackrel{\circ}{A}_z\right|^2
=0.
$$
Equivalently,
$$
\left(y_z^2d_z\right)' = -y_z^2 \left[\left|\stackrel{\circ}{A}_z\right|^2 +\frac{d_z^2}{n-1} \right].
$$
Since $y_z(t)=t+O(t^3)$ and $d_z(t)=O(t)$ as $t\downarrow0$,
integration gives
$$
d_z(\tau)
=-\int_0^\tau \frac{y_z(t)^2}{y_z(\tau)^2} \left[\left|\stackrel{\circ}{A}_z(t)\right|^2 +\frac{d_z(t)^2}{n-1} \right] \, dt.
$$
On the other hand,
$$
\frac{d}{d\tau} \log\frac{F_z(\tau)}{y_z(\tau)^{n-1}}
=H_z(\tau)-H_y(\tau)
=d_z(\tau).
$$
Using $\lim_{\tau\downarrow0} \frac{F_z(\tau)}{y_z(\tau)^{n-1}} =1$
and integrating once more gives \eqref{eq:exact-shear}.

For the comparison \eqref{eq:comparison-hierarchy}, let
$$
H_k=(n-1)\frac{s_k'}{s_k}, \qquad e_z=H_y-H_k.
$$
The equations satisfied by $H_y$ and $H_k$ give
$$
e_z' +2\frac{s_k'}{s_k}e_z +\frac{e_z^2}{n-1} +\operatorname{Ric}_k(\gamma_z', \gamma_z') =0,
$$
and hence
$$
\left(s_k^2e_z\right)' =-s_k^2 \left[\operatorname{Ric}_k(\gamma_z', \gamma_z') +\frac{e_z^2}{n-1} \right].
$$
Since $s_k(t)=t+O(t^3)$ and $e_z(t)=O(t)$ as $t\downarrow0$, two integrations yield
\begin{align*}
\log\frac{y_z(r)^{n-1}}{F_k(r)}
=& -\int_0^r\int_0^\tau \frac{s_k(t)^2}{s_k(\tau)^2} \left[\operatorname{Ric}_k \left(\gamma_z'(t), \gamma_z'(t)\right) +\frac{e_z(t)^2}{n-1} \right] \, dt\, d\tau.
\end{align*}
The two identities above give \eqref{eq:comparison-hierarchy}.
\end{proof}

\section{Integral comparison for stopped Jacobi solutions}\label{sec: integral comp}

We first define the notion of a stopped solution. Let $q\in L^\infty[0, T]$ and let $u_q$ be the solution of
\begin{equation}\label{eq:ordinary-scalar-jacobi}
u_q''+q(t)u_q=0, \qquad u_q(0)=0, \qquad u_q'(0)=1.
\end{equation}
Define
$$\tau_q=\inf\{t>0:u_q(t)=0\}, $$
where $\tau_q=\infty$ if $u_q$ has no positive zero. The corresponding stopped solution is defined as
\begin{equation}\label{eq:definition-stopped-solution}
j_q(t)=
\begin{cases}
u_q(t), & 0\le t\le \tau_q, \\
0, & t>\tau_q.
\end{cases}
\end{equation}
This convention is natural for volume comparison, where the radial volume density is nonnegative and each radial geodesic is integrated only up to its cut time.

Let $a\in \mathbb R$, we define $\widehat s_a$ to be the stopped solution of
$$
s''+aj=0, \qquad s(0)=0, \qquad s'(0)=1.
$$
So for $a>0$,
\begin{equation}\label{eq:stopped-sine}
\widehat{s}_a(t)=
\begin{cases}
\frac{\sin (\sqrt{a} t)}{\sqrt{a}}, & 0 \le t<\frac{\pi}{\sqrt{a}}, \\
0, & t \ge \frac{\pi}{\sqrt{a}}.
\end{cases}
\end{equation}
For $a\le0$, $\widehat s_a=s_a$.

We use the following finite-family comparison of Brown and Freedman. The proof can be found in \cite[Section~2.5] {BrownFreedman2022}, in the paragraph following Eq. (42).

\begin{proposition}\cite{BrownFreedman2022}
\label{prop:finite-BF}
Let $q_1, \ldots, q_L\in L^\infty[0, T]$. For almost every $t\in[0, T]$, arrange their values in nondecreasing order:
$$q_{(1)}(t)\le q_{(2)}(t)\le \cdots\le q_{(L)}(t), $$
where
$$
\{q_{(1)}(t), \ldots, q_{(L)}(t)\} = \{q_1(t), \ldots, q_L(t)\}
$$
as multisets (i.e. sets with multiplicities). Let $j_\ell$ and $j_{(\ell)}$ be the stopped solutions of
$$
j_\ell''+q_\ell(t)j_\ell=0, \qquad j_{(\ell)}''+q_{(\ell)}(t)j_{(\ell)}=0,
$$
with
$$
j_\ell(0)=j_{(\ell)}(0)=0, \qquad j_\ell'(0)=j_{(\ell)}'(0)=1.
$$
Then, for every positive integer $\mu$,
\begin{equation*}\label{eq:finite-BF-general}
\sum_{\ell=1}^{L}j_\ell(T)^\mu \le \sum_{\ell=1}^{L}j_{(\ell)}(T)^\mu.
\end{equation*}
In particular, suppose that there are real numbers $a_1, \ldots, a_L$ such that
$$
\{q_1(t), \ldots, q_L(t)\} = \{a_1, \ldots, a_L\}
$$
as multisets for almost every $t\in[0, T]$. Then
\begin{equation*}\label{eq:finite-BF}
\sum_{\ell=1}^{L}j_\ell(T)^\mu
\le
\sum_{\ell=1}^{L}\widehat s_{a_\ell}(T)^\mu.
\end{equation*}
\end{proposition}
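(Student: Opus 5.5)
The plan is to reduce to piecewise constant coefficients, prove a two-trajectory exchange inequality, and then sort the coefficients by a bubble-sort sequence of such exchanges. The second assertion is then immediate: if the multiset $\{q_\ell(t)\}$ equals $\{a_1\le\dots\le a_L\}$ for a.e.\ $t$ (relabelled in nondecreasing order), then $q_{(\ell)}\equiv a_\ell$ is constant, so $j_{(\ell)}=\widehat s_{a_\ell}$.

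\emph{Reduction.} First I reduce to the case where all $q_\ell$ are step functions on a common partition $0=t_0<t_1<\dots<t_M=T$. Approximate each $q_\ell$ in $L^1$ by step functions with the same $L^\infty$ bound, and sort the approximants pointwise. Sorting is $1$-Lipschitz in $\ell^\infty$, so the sorted approximants converge to the $q_{(\ell)}$. Solutions of \eqref{eq:ordinary-scalar-jacobi} depend continuously in $C^1[0,T]$ on $q$ in $L^1$. Every positive zero of a nontrivial solution is simple, so the first zero $\tau_q$ also depends continuously on $q$, as long as it is not exactly $T$. This gives convergence of $j_q(T)$, and the boundary case $\tau_q=T$ is harmless because there $j_q(T)=0$ is the limit anyway. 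It therefore suffices to treat the step-function case.

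\emph{Exchange lemma.} On each interval $[t_{i-1},t_i)$ the coefficients form a finite multiset of constants, and which trajectory receives which constant is a permutation. The core step is a two-trajectory exchange statement. Suppose two stopped trajectories have states $X=(j,j')$ and $Y=(k,k')$ at time $t_{i-1}$, with $X$ ``ahead'' of $Y$, and they receive constants $a<b$ on that interval. Then assigning $a$ to $X$ and $b$ to $Y$ should not decrease the final value $j(T)^\mu+k(T)^\mu$, whatever later assignments are made. For ``ahead'' I would use the Prüfer angle $\theta$, defined by $j=\rho\sin\theta$ and $j'=\rho\cos\theta$, with $\theta$ measured from $0$ and the trajectory frozen once $\theta$ reaches $\pi$. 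The angle satisfies $\theta'=\cos^2\theta+q\sin^2\theta$, which is monotone in $q$. This preserves the ordering and gives a clean comparison of zero times. The intended argument is backward induction over intervals, showing that the value function at each time is ``Schur-like'' with respect to this ordering. Since all trajectories start at the same state, sorting is then consistent with giving the $\ell$-th smallest coefficient to the $\ell$-th trajectory for all time, and these trajectories are exactly the $j_{(\ell)}$.

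\emph{Concluding and main obstacle.} Given the exchange lemma, I would start from the arbitrary assignment $(q_\ell)$ and perform adjacent transpositions interval by interval, working from the last interval backwards. Each transposition does not decrease $\sum_\ell j_\ell(T)^\mu$, and the process ends at the sorted assignment. The main obstacle is the exchange lemma itself. It needs the right notion of dominance of states, which must involve both the amplitude $\rho$ and the angle $\theta$ so that it is preserved by the flow. It must also handle the nonlinearity of stopping and the fact that $\mu$ is only a positive integer, so odd $\mu$ allows negative contributions after sign changes; this is exactly why stopped solutions are used. The first thing I would try is to follow the argument of \cite[Section~2.5]{BrownFreedman2022} after Eq.~(42). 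Their argument compares the two rearranged pairs via the linear transfer matrices $\begin{pmatrix}\cos\sqrt a\,h & \sin\sqrt a\,h/\sqrt a\\ -\sqrt a\sin\sqrt a\,h&\cos\sqrt a\,h\end{pmatrix}$. It then checks the inequality to first order in the interval length $h$, where the difference of the two assignments has a sign determined by $(b-a)(j_X-j_Y)$.
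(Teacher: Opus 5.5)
\textbf{Verdict: there is a genuine gap.} The paper does not prove this proposition. It quotes it from \cite[Section~2.5]{BrownFreedman2022} and applies it only to step-function coefficients, in Lemma~\ref{lem:finite-valued-integral}.

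\textbf{What is fine.} Two parts of your proposal are sound:
\begin{itemize}
\item the deduction of the second assertion from the first;
\item the reduction to step functions, which is the continuity argument of Lemma~\ref{lem:stopped-continuity} combined with the fact that pointwise sorting is $1$-Lipschitz.
\end{itemize}

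\textbf{The exchange lemma.} This is the entire content of the proposition. You leave it unproved, as you acknowledge, and as formulated it is false, for two reasons.

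\emph{It cannot hold regardless of later assignments.} No rule based only on the states at $t_{i-1}$ can be favourable ``whatever later assignments are made''. Suppose $X$ and $Y$ enter the interval in (nearly) the same state and their futures are fixed. If $X$'s future coefficients are the larger ones, $X$ should receive $b$; if they are the smaller ones, $X$ should receive $a$. Already at second order in the coefficients the sorted pairing is strictly better. By continuity, this persists when $X$ is slightly ``ahead''.

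\emph{The Prüfer angle is the wrong dominance relation.} This fails even on the last interval, where there are no later assignments. If that interval has length $h$, the assignment-dependent part of $\sum_\ell j_\ell(T)^\mu$ is $-\tfrac{h^2}{2}\mu\sum_\ell q_\ell\, j_\ell(t_{M-1})^\mu+O(h^3)$. So the larger coefficient must go to the smaller value of $j$, not to a particular angle. Concretely, $(1,-1)$ and $(0.1,-1)$ are both ahead of $(0.5,1)$ in angle. Yet against $(0.5,1)$, the first should receive the smaller coefficient and the second the larger one.

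There is also a sign problem. With ``ahead'' meaning larger $\theta$, your rule contradicts the sorted configuration itself. By $\theta'=\cos^2\theta+q\sin^2\theta$, the trajectory with larger $\theta$ there always carries the larger coefficient.

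\textbf{The backward bubble sort.} For the same reason, it does not produce a monotone chain of transpositions. Once you transpose coefficients on an earlier interval, the states at all later nodes change. The later intervals you already arranged need no longer be favourable for the new states.

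\textbf{What is missing.} You need a genuine structural statement. One form would be:
\begin{itemize}
\item a dominance order on states, involving amplitude as well as angle, that is preserved by the sorted dynamics;
\item a supermodularity (Monge-type) property of the value function of the remaining problem with respect to that order, proved by backward induction.
\end{itemize}
This is what the cited argument has to supply.

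Your sketch of that argument does not supply it. For a swap on an interior interval, the first-order change is $-h(b-a)\bigl[j_X\,\partial_{j'}\Phi_X-j_Y\,\partial_{j'}\Phi_Y\bigr]$. Here $\Phi_X,\Phi_Y$ are the terminal payoffs as functions of the state at the end of the swapped interval, and they depend on each trajectory's future. So the sign is not that of $(b-a)(j_X-j_Y)$. Moreover, a first-order check yields a global inequality only if it is integrated along an explicit path of assignments from $(q_\ell)$ to $(q_{(\ell)})$ with controlled sign.

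As it stands, the proposal amounts to the same citation of \cite{BrownFreedman2022} that the paper makes, surrounded by scaffolding that is not correct as stated.
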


We next prove two limiting statements needed to pass from the finite shuffling comparison to its integral form. Since our treatment of this passage differs somewhat from that of Brown and Freedman, we include the relevant details. The first statement concerns the dependence of a stopped solution on its coefficient.

The following lemma is a standard application of Gronwall's inequality.
\begin{lemma}
\label{lem:linear-system-estimate}
Let $A\in L^1([0, T], \mathbb R^{m\times m})$ and $b\in L^1([0, T], \mathbb R^m)$. Suppose that $z\colon[0, T]\to\mathbb R^m$ is absolutely continuous and satisfies $z(0)=0$ and $z'(t)=A(t)z(t)+b(t)$ for almost every $t\in[0, T]$. Then
\begin{equation*}\label{eq:linear-system-estimate}
\sup_{0\le t\le T}|z(t)|
\le \exp\left(\int_0^T|A(s)|\, ds\right) \left(\int_0^T|b(s)|\, ds \right).
\end{equation*}
Here $|A|$ is the operator norm.
\end{lemma}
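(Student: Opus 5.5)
The plan is to reduce the estimate to the scalar Gronwall inequality applied to $\phi(t)=|z(t)|$, or more robustly to the integral quantity $\psi(t)=\int_0^t |z'(s)|\,ds$. Since $z$ is absolutely continuous with $z(0)=0$, for every $t\in[0,T]$ we have
$$
z(t)=\int_0^t \bigl(A(s)z(s)+b(s)\bigr)\,ds,
$$
and therefore
$$
|z(t)|\le \int_0^t |A(s)|\,|z(s)|\,ds+\int_0^t|b(s)|\,ds\le \beta+\int_0^t |A(s)|\,|z(s)|\,ds,
$$
where $\beta=\int_0^T|b(s)|\,ds$ is a constant. The function $t\mapsto |z(t)|$ is continuous, hence bounded on $[0,T]$, and $|A|\in L^1$. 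This places us exactly in the setting of the integral form of Gronwall's inequality with an $L^1$ kernel.

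Next we prove the needed Gronwall step directly rather than citing it, because the coefficient is only integrable. Set $\Phi(t)=\beta+\int_0^t|A(s)|\,|z(s)|\,ds$. This function is absolutely continuous, satisfies $\Phi(0)=\beta$, and for almost every $t$,
$$
\Phi'(t)=|A(t)|\,|z(t)|\le |A(t)|\,\Phi(t).
$$
Multiply by the absolutely continuous integrating factor $E(t)=\exp\bigl(-\int_0^t|A|\bigr)$. The product $E\Phi$ is absolutely continuous, being a product of absolutely continuous functions on a compact interval, and $(E\Phi)'=E(\Phi'-|A|\Phi)\le 0$ almost everywhere. Hence $E\Phi$ is nonincreasing, which gives
$$
\Phi(t)\le \beta\exp\Bigl(\int_0^t|A|\Bigr)\le \beta\exp\Bigl(\int_0^T|A|\Bigr).
$$
Since $|z(t)|\le\Phi(t)$, taking the supremum over $t$ yields the stated bound.

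The only point that needs care is the regularity of the functions involved. Every derivative identity must be read as holding almost everywhere, and monotonicity must be deduced from a nonpositive derivative of an absolutely continuous function, not of a merely differentiable one. This is why we work with $\Phi$ instead of differentiating $|z|$ itself, which could fail to be differentiable where $z$ vanishes. No other obstacle is expected. If $\beta=0$, the same argument shows $z\equiv 0$, which is consistent with the bound.
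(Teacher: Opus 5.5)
Your proposal is correct and follows the paper's approach: the paper gives no written proof and calls the lemma ``a standard application of Gronwall's inequality,'' and your integral Gronwall argument is exactly that. Your use of the absolutely continuous function $\Phi$ with the integrating factor $\exp\bigl(-\int_0^t|A|\bigr)$ properly handles the merely integrable coefficient $|A|$.
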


\begin{lemma}
\label{lem:stopped-continuity}
Let $q_\nu, q\in L^\infty[0, T]$ and suppose that
$$
\sup_\nu\lVert q_\nu\rVert_{L^\infty[0, T]}<\infty, \qquad \lVert q_\nu-q\rVert_{L^1[0, T]}\longrightarrow0.
$$
Let $j_{q_\nu}$ and $j_q$ be the stopped solutions defined by \eqref{eq:ordinary-scalar-jacobi} and \eqref{eq:definition-stopped-solution}. Then
\begin{equation*}\label{eq:stopped-continuity}
j_{q_\nu}(T)\longrightarrow j_q(T).
\end{equation*}
\end{lemma}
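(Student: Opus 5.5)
The plan is to prove uniform $C^1$ convergence of the unstopped solutions $u_{q_\nu}\to u_q$ on $[0,T]$ using Lemma~\ref{lem:linear-system-estimate}. Then I would split according to the position of the first zero $\tau_q$ relative to $T$.

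\emph{Step 1 (uniform convergence).} Let $M=\sup_\nu\lVert q_\nu\rVert_{L^\infty}+\lVert q\rVert_{L^\infty}$ and set $w_\nu=u_{q_\nu}-u_q$. The vector $z_\nu=(w_\nu,w_\nu')$ satisfies $z_\nu(0)=0$ and
$$
z_\nu'=\begin{pmatrix}0&1\\-q_\nu&0\end{pmatrix}z_\nu+\begin{pmatrix}0\\-(q_\nu-q)u_q\end{pmatrix}.
$$
Lemma~\ref{lem:linear-system-estimate} then gives
$$
\sup_{[0,T]}\bigl(|w_\nu|+|w_\nu'|\bigr)\le C\,e^{(1+M)T}\lVert u_q\rVert_{L^\infty[0,T]}\,\lVert q_\nu-q\rVert_{L^1}\to0 .
$$
Applying the same lemma to $u_{q_\nu}$ itself, written against the system with zero coefficient, gives uniform bounds $|u_{q_\nu}|,|u_{q_\nu}'|\le C(M,T)$. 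Consequently $|u_{q_\nu}''|\le MC$, so there is a $\delta=\delta(M,T)>0$ with $u_{q_\nu}'\ge\frac12$ on $[0,\delta]$ for all $\nu$ (and likewise for $u_q$). In particular, no $u_{q_\nu}$ vanishes on $(0,\delta]$.

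\emph{Step 2 (case analysis).} I would distinguish three cases.

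\textbf{Case $\tau_q>T$.} Here $u_q\ge c>0$ on $[\delta,T]$ by compactness. Uniform convergence gives $u_{q_\nu}>0$ on $[\delta,T]$ for large $\nu$, and Step 1 covers $(0,\delta]$. Hence $\tau_{q_\nu}>T$ and $j_{q_\nu}(T)=u_{q_\nu}(T)\to u_q(T)=j_q(T)$.

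\textbf{Case $\tau_q<T$.} Since $u_q$ is a nontrivial solution, $u_q'(\tau_q)\ne0$, and indeed $u_q'(\tau_q)<0$ because $u_q>0$ just before $\tau_q$. So there is $\eta>0$ with $\tau_q+\eta<T$ and $u_q(\tau_q+\eta)<0$. For large $\nu$, uniform convergence gives $u_{q_\nu}(\tau_q+\eta)<0$. Since $u_{q_\nu}>0$ near $0$, the intermediate value theorem yields $\tau_{q_\nu}<\tau_q+\eta<T$. Therefore $j_{q_\nu}(T)=0=j_q(T)$.

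\textbf{Case $\tau_q=T$.} Then $j_q(T)=u_q(T)=0$. For each $\nu$, $j_{q_\nu}(T)$ is either $u_{q_\nu}(T)$ or $0$, so $|j_{q_\nu}(T)|\le|u_{q_\nu}(T)|\to|u_q(T)|=0$.

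\emph{Main difficulty.} The only delicate point is that the stopping map is discontinuous in general. It is handled by two facts: transversality of zeros of nontrivial solutions, which forces a sign change, and the uniform lower bound near $t=0$, which prevents spurious early zeros of $u_{q_\nu}$. The boundary case $\tau_q=T$ is harmless because both possible values of $j_{q_\nu}(T)$ tend to $0$.
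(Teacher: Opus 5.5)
Your proof is correct and follows essentially the same route as the paper. Both arguments get $C^1$ convergence of the unstopped solutions from Lemma~\ref{lem:linear-system-estimate} and then split into the same three cases, $\tau_q>T$, $\tau_q<T$ and $\tau_q=T$; the differences are cosmetic: you take a positivity interval $(0,\delta]$ uniform in $\nu$ from a bound on $u''$ where the paper uses $C^1$ convergence near $t=0$, and when $\tau_q<T$ you apply the intermediate value theorem starting from that initial positivity instead of bracketing $\tau_q$ between $u(\tau_q-\varepsilon)>0$ and $u(\tau_q+\varepsilon)<0$.
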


\begin{proof}
Let $u_\nu$ and $u$ denote the ordinary, unstopped solutions associated with $q_\nu$ and $q$, respectively. Put
$$
Y_\nu=
\begin{pmatrix}
u_\nu\\
u_\nu'
\end{pmatrix},
\qquad
Y=
\begin{pmatrix}
u\\
u'
\end{pmatrix}.
$$
These vector-valued functions satisfy
$$
Y_\nu'=A_\nu Y_\nu,
\qquad
Y'=AY,
$$
where $
A_\nu(t)=
\begin{pmatrix}
0& 1\\
-q_\nu(t)& 0
\end{pmatrix}, \;
A(t)=
\begin{pmatrix}
0& 1\\
-q(t)& 0
\end{pmatrix}
$.

The uniform bound for the coefficients and Gronwall's inequality give a common uniform bound for $Y_\nu$ and $Y$ on $[0, T]$. Since
$$
(Y_\nu-Y)' = A_\nu(Y_\nu-Y)+(A_\nu-A)Y,
$$
applying Lemma~\ref{lem:linear-system-estimate} to $Y_\nu-Y$ gives
\begin{equation*}\label{eq:ordinary-C1-convergence}
\lVert u_\nu-u\rVert_{C^1[0, T]}
\le C\lVert q_\nu-q\rVert_{L^1[0, T]},
\end{equation*}
where $C$ depends only on $T$ and a common $L^\infty$ bound for the coefficients. In particular,
$$ u_\nu\longrightarrow u \qquad\text{in }C^1[0, T]. $$

Let $\tau$ be the first positive zero of $u$, with $\tau=\infty$ if no such zero exists. We distinguish three cases.

Suppose first that $\tau>T$. Since $u'(0)=1$, $C^1$ convergence of $u_\nu$ implies that there is a small $\delta>0$ such that $u_\nu'>0$ on $[0, \delta]$ for sufficiently large $\nu$, and so $u_\nu>0$ there for such $\nu$. On the other hand, $\min_{[\delta, T]}u>0$, so the uniform convergence implies that $u_\nu>0$ on $[\delta, T]$. Thus neither $u$ nor $u_\nu$ is stopped before $T$, and
$$j_{q_\nu}(T)=u_\nu(T)\longrightarrow u(T)=j_q(T). $$

Suppose next that $\tau<T$. We necessarily have $u'(\tau)<0$, for otherwise $u(\tau)=u'(\tau)=0$ implies $u\equiv0$ by uniqueness, contradicting $u'(0)=1$. We may therefore choose $\varepsilon>0$ such that
$$
\tau+\varepsilon<T, \qquad u(\tau-\varepsilon)>0, \qquad u(\tau+\varepsilon)<0.
$$
For all sufficiently large $\nu$, the same two strict inequalities hold with $u_\nu$ in place of $u$. Hence $u_\nu$ has a positive zero before $T$, and consequently
$$j_{q_\nu}(T)=j_q(T)=0. $$

Finally, suppose that $\tau=T$. If $u_\nu$ has a positive zero at or before $T$, then $j_{q_\nu}(T)=0$. Otherwise $j_{q_\nu}(T)=u_\nu(T)>0$. In both cases,
$$0\le j_{q_\nu}(T)\le |u_\nu(T)|. $$
Since $u_\nu(T)\to u(T)=0$, it follows that $j_{q_\nu}(T)\to0=j_q(T)$.
\end{proof}

The following lemma gives an integral version of the finite shuffling comparison. For the geometric application, one may regard $X$ as $SN$, so that each $z$ labels a geodesic and its associated approximating scalar Jacobi equation. On the $r$-th time interval, the coefficient of this equation is constant and equal to $q_r(z)$, assumed to be one of $a_1, \ldots, a_s$. This value approximates the Ricci coefficient along the corresponding geodesic on that time interval. Condition \eqref{eq:equal-level-measures} says that, for each $i$, the measure of the points assigned the value $a_i$ is the same on every time interval, although the set of such points may change from one interval to the next. The right-hand side of \eqref{eq:finite-valued-integral} corresponds to the configuration in which each $z$ retains its initial coefficient $q_1(z)$ throughout $[0, T]$. The lemma states that, starting from this configuration, rearranging the coefficients between time intervals, while preserving the measure assigned to each value $a_i$, cannot increase the averaged $\mu$-th power of the terminal values.

\begin{lemma}
\label{lem:finite-valued-integral}
Let $(X, m)$ be a probability measure space, and let $0=t_0<t_1<\cdots<t_{\mathrm{N}}=T$. Let $a_1, \ldots, a_s$ are distinct real numbers. Suppose we have a family of measurable functions
$$q_r:X\longrightarrow\{a_1, \ldots, a_s\}$$
for $r=1, \cdots, {\mathrm{N}}$, such that
\begin{equation}\label{eq:equal-level-measures}
m\{z:q_1(z)=a_i\}=
m\{z:q_2(z)=a_i\}=\cdots=
m\{z:q_{\mathrm{N}}(z)=a_i\} \text{ for each } a_i.
\end{equation}
For each $z\in X$, define
$$
\mathrm{p}_z(t)=q_r(z) \qquad\text{when }t_{r-1}<t<t_r,
$$
and let $j_z:=j_{\mathrm{p}_z}$ be its stopped solution. Then, for every $\mu\in \mathbb { N }$,
\begin{equation}\label{eq:finite-valued-integral}
\int_Xj_z(T)^\mu\, dm(z)
\le \int_X\widehat s_{q_1(z)}(T)^\mu\, dm(z).
\end{equation}
\end{lemma}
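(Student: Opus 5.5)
The plan is to reduce \eqref{eq:finite-valued-integral} to Proposition~\ref{prop:finite-BF} by discretizing the measure. First observe that $j_z(T)$ depends on $z$ only through its \emph{type}, namely the word $w(z)=(q_1(z),\ldots,q_{\mathrm N}(z))\in\{a_1,\ldots,a_s\}^{\mathrm N}$. Indeed $\mathrm p_z$ is the piecewise constant function determined by $w(z)$ and the fixed partition $t_0<\cdots<t_{\mathrm N}$. Writing $j_w(T)$ for the common value and $p_w=m\{z: w(z)=w\}$, the left side of \eqref{eq:finite-valued-integral} equals $\sum_w p_w\, j_w(T)^\mu$. Similarly, setting $\pi_i=m\{q_1=a_i\}$, the right side equals $\sum_i \pi_i\, \widehat s_{a_i}(T)^\mu$. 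Since $\pi_i=\sum_{w:\,w_1=a_i}p_w$, this can be rewritten as $\sum_w p_w\,\widehat s_{w_1}(T)^\mu$. Hypothesis \eqref{eq:equal-level-measures} says exactly that the probability vector $p=(p_w)_w$ has the same $r$-th marginal for every $r=1,\ldots,\mathrm N$.

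So it suffices to prove the linear inequality
$$\Psi(p):=\sum_w p_w\bigl(\widehat s_{w_1}(T)^\mu-j_w(T)^\mu\bigr)\ge0$$
for every $p$ in the polytope $P$. Here $P$ consists of the vectors $p\ge0$ with $\sum_w p_w=1$ whose $\mathrm N$ marginals all coincide. First take $p\in P$ with rational entries, and write $p_w=n_w/L$ for integers $n_w$. Form a finite family of $L$ labels in which each word $w$ appears $n_w$ times, and let $q_\ell$ be the associated piecewise constant coefficient of the $\ell$-th label. Because the marginals agree, for every $r$ the multiset $\{q_1(t),\ldots,q_L(t)\}$ for $t\in(t_{r-1},t_r)$ contains $a_i$ with multiplicity $L\pi_i$, independently of $r$. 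Hence, for a.e.\ $t$, it equals the multiset of first-interval values $\{w_1(\ell)\}_{\ell}$. The second part of Proposition~\ref{prop:finite-BF}, applied with these $a_\ell=w_1(\ell)$, gives $\sum_\ell j_\ell(T)^\mu\le\sum_\ell \widehat s_{w_1(\ell)}(T)^\mu$. Dividing by $L$ yields $\Psi(p)\ge0$.

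The remaining step, which I expect to be the only delicate point, is to pass from rational $p$ to general $p\in P$, since the actual $p_w$ and $\pi_i$ may be irrational. The set $P$ is a bounded polyhedron cut out by linear equalities and inequalities with integer coefficients: nonnegativity, total mass one, and equality of marginals. Its vertices are therefore rational, because each is the unique solution of a rational linear system. Consequently the points of $P$ that are rational convex combinations of vertices are dense in $P$. Since $\Psi$ is linear, and hence continuous, in $p$ (the finitely many numbers $j_w(T)$ and $\widehat s_{a_i}(T)$ are fixed), $\Psi\ge0$ on a dense subset forces $\Psi\ge0$ on all of $P$.

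Note that the weights $p_w$ are sums of measures of measurable sets, so measurability of the $q_r$ is all that is used. Lemma~\ref{lem:stopped-continuity} is not needed at this stage; it will enter later, when general continuous coefficients are approximated by finitely-valued ones.
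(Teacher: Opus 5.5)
Your proposal is correct and follows the same route as the paper. Both partition $X$ by the word $(q_1(z),\ldots,q_{\mathrm N}(z))$, apply Proposition~\ref{prop:finite-BF} to a rational reweighting realized by $L$ copies of the piecewise-constant coefficients, and pass to the limit using linearity in the weights. Your argument that the polytope cut out by the balancing identities has rational vertices, so its rational points are dense, supplies a justification that the paper omits when it simply asserts the existence of rational weights $\lambda_\alpha^{(\nu)}\to\lambda_\alpha$ preserving those identities.
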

\begin{proof}
For $\alpha=(\alpha_1, \ldots, \alpha_{\mathrm{N}})$, where each $\alpha_i\in\{1, \cdots, s\}$, let
$$
E_\alpha
=
\{z\in X:
q_1(z)=a_{\alpha_1}, \ldots, q_{\mathrm{N}}(z)=a_{\alpha_{\mathrm{N}}}\},
\qquad
\lambda_\alpha=m(E_\alpha).
$$
The sets $\{E_\alpha\}_{\alpha\in \{1, \cdots, s\}^{\mathrm{N}}}$ form a partition of $X$.

Define the piecewise constant function $q_\alpha$ on $[0, T]$ by
$$
q_\alpha(t)=a_{\alpha_r}
\qquad\text{for }t_{r-1}<t<t_r.
$$
If $z\in E_\alpha$, then $\mathrm{p}_z=q_\alpha$. Let $J_\alpha$ denote the
value at time $T$ of the stopped solution of
$$
j''+q_\alpha(t)j=0,
\qquad
j(0)=0,
\qquad
j'(0)=1.
$$
Thus
$$
j_z(T)=J_\alpha
\qquad\text{for }z\in E_\alpha.
$$
The assumption \eqref{eq:equal-level-measures} gives
\begin{equation}\label{eq:balancing-identities}
\sum_{\alpha:\, \alpha_r=i}\lambda_\alpha = \sum_{\alpha:\, \alpha_1=i}\lambda_\alpha
\qquad
(1\le r\le {\mathrm{N}}, \ 1\le i\le s).
\end{equation}
Thus, for each $i$, the total weight assigned to the value $a_i$ is the
same on every time interval.

Choose nonnegative rational numbers $\lambda_\alpha^{(\nu)}$ such that $\lambda_\alpha^{(\nu)}\longrightarrow\lambda_\alpha$, \quad $\sum_\alpha\lambda_\alpha^{(\nu)}=1$, and such that \eqref{eq:balancing-identities} remains valid with
$\lambda_\alpha^{(\nu)}$ in place of $\lambda_\alpha$:
\begin{equation}\label{eq:balancing-identities'}
\sum_{\alpha: \alpha_r=i} \lambda^{(\nu)}_\alpha=\sum_{\alpha: \alpha_1=i} \lambda^{(\nu)}_\alpha \quad(1 \le r \le {\mathrm{N}}, 1 \le i \le s).
\end{equation}

For each $\nu$, choose a positive integer $L_\nu$ such that $L_\nu\lambda_\alpha^{(\nu)}$ is an integer for every $\alpha$. Form a family of $L_\nu$ ``coefficient functions'' by taking $L_\nu\lambda_\alpha^{(\nu)}$ copies of each $q_\alpha$. By \eqref{eq:balancing-identities'}, the coefficient values occurring on every time interval form the same multiset. After arranging these values in nondecreasing order at each time, the resulting family of functions are therefore constant functions (they are exactly $a_{1}, \cdots, a_s$ with multiplicities). Proposition \ref{prop:finite-BF} gives $\sum_\alpha L_\nu \lambda_\alpha^{(\nu)} J_\alpha^\mu \le \sum_\alpha L_\nu \lambda_\alpha^{(\nu)} \widehat{s}_{a_{\alpha_1}}(T)^\mu$, and so
$$
\sum_\alpha \lambda_\alpha^{(\nu)} J_\alpha^\mu \le \sum_\alpha \lambda_\alpha^{(\nu)} \widehat{s}_{a_{\alpha_1}}(T)^\mu.
$$
Letting $\nu\to\infty$, we obtain
$$
\sum_\alpha\lambda_\alpha J_\alpha^\mu
\le
\sum_\alpha\lambda_\alpha
\widehat s_{a_{\alpha_1}}(T)^\mu.
$$
Finally,
$$
\sum_\alpha\lambda_\alpha J_\alpha^\mu = \int_Xj_z(T)^\mu\, dm(z)
\text{ and }
\sum_\alpha\lambda_\alpha \widehat s_{a_{\alpha_1}}(T)^\mu = \int_X\widehat s_{q_1(z)}(T)^\mu\, dm(z).
$$
This proves \eqref{eq:finite-valued-integral}.
\end{proof}
We now extend the preceding comparison from coefficients taking values in a finite set to the continuously varying coefficients arising in the geometric application. Let $K$ be defined by \eqref{eq:K}, and let $G_t$ denote the geodesic flow. Each unit-speed geodesic is determined by its initial condition $z\in SN$, and we are going to consider the associated family of equations $j_z''(t)+K(G_tz)j_z(t)=0$, indexed by $z\in SN$. Although the coefficient $K(G_tz)$ varies with $t$ along each geodesic, preservation of Liouville measure implies that, for every Borel set $E\subset\mathbb R$,
$$
m\{z\in SN:K(G_tz)\in E\} = m\{z\in SN:K(z)\in E\}.
$$
Thus $K(G_t z)$ are rearranged among the geodesics as time varies, while the measure assigned to each range of values remains unchanged. This is the continuous counterpart of the condition in the preceding lemma. It allows the subsequent volume estimate to retain the directional Ricci curvature, rather than using only its common lower bound, and subsequently leads to an improvement over Bishop's volume bound.

Again, it is convenient to state this integral Jacobi comparison in the slightly more general setting of a measure-preserving flow. The theorem below shows that replacing the varying coefficients $K(\Phi_tz)$ by the constant coefficients $K(z)$ gives an upper bound after integration over the underlying space.
\begin{theorem} \label{thm:continuum-shuffle}
Let $X$ be a compact metric space equipped with a Borel probability measure $m$. Let $\Phi:[0, \infty)\times X\longrightarrow X$ be a continuous flow, and write $\Phi_t=\Phi(t, \cdot)$. Supppose $m(\Phi_t^{-1}(E))=m(E)$ for every Borel set $E\subset X$ and every $t\ge 0$. Let $K$ be a continuous function on $X$. For each $z\in X$, let $j_z$ be the stopped solution of
\begin{equation*}\label{eq:flow-jacobi}
j''(t)+K(\Phi_tz)j(t)=0, \qquad j(0)=0, \qquad j'(0)=1.
\end{equation*}
Then for every positive integer $\mu$ and every $t\ge 0$,
\begin{equation*}\label{eq:continuum-shuffle}
\int_Xj_z(t)^\mu\, dm(z) \le \int_X\widehat s_{K(z)}(t)^\mu\, dm(z).
\end{equation*}
\end{theorem}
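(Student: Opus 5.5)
Fix $t=T>0$ and $\mu\in\mathbb N$. The plan is to approximate the continuous coefficient $K(\Phi_s z)$ by piecewise-constant, finitely-valued coefficients that satisfy the balancing condition \eqref{eq:equal-level-measures}. We then apply Lemma~\ref{lem:finite-valued-integral} and pass to the limit with Lemma~\ref{lem:stopped-continuity} and dominated convergence.

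\textbf{Discretization.} Since $X$ is compact and $K$ is continuous, $K$ is bounded, say $|K|\le M$. For $\delta>0$, choose finitely many values $a_1<\cdots<a_s$ with mesh at most $\delta$ in $[-M-1,M+1]$. Set $Q_\delta=\pi_\delta\circ K$, where $\pi_\delta$ maps each half-open interval $[a_i,a_{i+1})$ to $a_i$. This is a Borel function $X\to\{a_1,\dots,a_s\}$ with $|Q_\delta-K|\le\delta$. Take the uniform partition $t_r=rT/\mathrm N$ and define $q_r(z)=Q_\delta(\Phi_{t_{r-1}}z)$. Measure preservation gives
\[
m\{q_r=a_i\}=m\bigl(\Phi_{t_{r-1}}^{-1}\{Q_\delta=a_i\}\bigr)=m\{Q_\delta=a_i\},
\]
so \eqref{eq:equal-level-measures} holds. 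Note that $q_1(z)=Q_\delta(z)$. Lemma~\ref{lem:finite-valued-integral} then yields
\[
\int_X j_{\mathrm p_z}(T)^\mu\,dm\le\int_X\widehat s_{Q_\delta(z)}(T)^\mu\,dm,
\]
where $\mathrm p_z(s)=q_r(z)$ on $(t_{r-1},t_r)$.

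\textbf{Limits.} Choose $\delta=\delta_\nu\to0$ and $\mathrm N=\mathrm N_\nu\to\infty$.

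On the left, $\Phi$ is continuous on the compact set $[0,T]\times X$, so $K\circ\Phi$ is uniformly continuous there. Consequently $|\mathrm p_z(s)-K(\Phi_sz)|\le\delta_\nu+\omega(T/\mathrm N_\nu)$ uniformly in $z$ and $s$, where $\omega$ is a modulus of continuity. All coefficients are bounded by $M+1$. Lemma~\ref{lem:stopped-continuity} therefore gives $j_{\mathrm p_z}(T)\to j_z(T)$ pointwise in $z$.

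On the right, $Q_{\delta_\nu}(z)\to K(z)$, so the same lemma applied to constant coefficients gives $\widehat s_{Q_{\delta_\nu}(z)}(T)\to\widehat s_{K(z)}(T)$.

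Gronwall (Lemma~\ref{lem:linear-system-estimate}) bounds all the stopped solutions by a constant depending only on $T$ and $M$. Since $m$ is a probability measure, dominated convergence applies on both sides, and we obtain the claimed inequality. Measurability of $z\mapsto j_z(T)$ follows from the same pointwise convergence, as a limit of simple functions.

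\textbf{Main obstacle.} The delicate point is making the discrete coefficient exactly balanced while still converging to $K\circ\Phi$. Sampling $K$ along the flow at the left endpoints $t_{r-1}$, using the same Borel quantizer $\pi_\delta$ at every step, gives exact equality of level measures. Averaging over each time interval would instead destroy this equality. The quantizer also keeps all values inside the fixed finite set $\{a_i\}$, which is what Lemma~\ref{lem:finite-valued-integral} requires. The only remaining subtlety is the borderline case in Lemma~\ref{lem:stopped-continuity} where the zero time equals $T$, and the lemma already handles it.
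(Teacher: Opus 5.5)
Your proposal is correct and follows essentially the same route as the paper: quantize $K$ to a finite-valued Borel function, sample it along the flow at the left endpoints $t_{r-1}$ so that measure preservation gives the balancing condition \eqref{eq:equal-level-measures}, apply Lemma~\ref{lem:finite-valued-integral}, and pass to the limit using Lemma~\ref{lem:stopped-continuity}, a uniform Gronwall bound, and dominated convergence. The differences are cosmetic: you use an explicit quantizer, separate parameters $\delta_\nu$ and $\mathrm N_\nu$, and an explicit measurability remark, whereas the paper uses a single sequence $K_{\mathrm N}$ and leaves measurability implicit.
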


\begin{proof}
Fix $T>0$. Since $K(X)$ is a compact subset of $\mathbb R$, there is a sequence of Borel functions $K_{\mathrm{N}}\colon X\to\mathbb R$, each taking values in a finite set, such that
\begin{equation}\label{eq:uniform-finite-valued-approximation}
\|K_{\mathrm{N}}-K\|_{L^\infty(X)}\longrightarrow0.
\end{equation}
In particular,
\begin{equation}\label{eq:uniform-coefficient-bound}
\sup_{\mathrm{N}}\|K_{\mathrm{N}}\|_{L^\infty(X)}+\|K\|_{L^\infty(X)}\le M
\end{equation}
for some $M<\infty$.

Let $t_r=\frac{rT}{{\mathrm{N}}}$, \;$0\le r\le {\mathrm{N}}$,
and define the piecewise constant coefficient
\begin{equation}\label{eq:discrete-flow-coefficient}
q_{{\mathrm{N}}, z}(t)=K_{\mathrm{N}}(\Phi_{t_{r-1}}z)
\qquad
\text{for }t_{r-1}<t<t_r.
\end{equation}
Let $j_{{\mathrm{N}}, z}$ be its stopped solution.

For every value $a$ taken by $K_{\mathrm{N}}$, measure preservation gives
\begin{align*}
m\{z:K_{\mathrm{N}}(\Phi_{t_{r-1}}z)=a\}
& =
m\left(\Phi_{t_{r-1}}^{-1}\{z:K_{\mathrm{N}}(z)=a\}\right)\\
& =
m\{z:K_{\mathrm{N}}(z)=a\}.
\end{align*}

Thus the coefficient functions $z\longmapsto K_{\mathrm{N}}(\Phi_{t_{r-1}}z)$ satisfy \eqref{eq:equal-level-measures}. Since $\Phi_0$ is the identity, Lemma \ref{lem:finite-valued-integral} gives
\begin{equation}\label{eq:approximating-integral-comparison}
\int_Xj_{{\mathrm{N}}, z}(T)^\mu\, dm(z)
\le \int_X\widehat s_{K_{\mathrm{N}}(z)}(T)^\mu\, dm(z).
\end{equation}

The map $(t, z)\longmapsto K(\Phi_tz)$ is uniformly continuous on $[0, T]\times X$. It follows
from \eqref{eq:uniform-finite-valued-approximation} and
\eqref{eq:discrete-flow-coefficient} that as $\mathrm{N}\to\infty$,
\begin{align*}
& \sup_{z\in X}\sup_{0\le t\le T} \left|q_{{\mathrm{N}}, z}(t)-K(\Phi_tz)\right|\\
\le& \lVert K_{\mathrm{N}}-K\rVert_{L^\infty(X)} + \sup_{\substack{z\in X, \;s, t\in[0, T] |s-t|\le T/{\mathrm{N}}}} \left|K(\Phi_sz)-K(\Phi_tz)\right| \longrightarrow0.
\end{align*}
Lemma \ref{lem:stopped-continuity} therefore implies that, for every $z\in X$, $\displaystyle \lim_{\mathrm{N}\to \infty}j_{{\mathrm{N}}, z}(T)= j_z(T)$.

Let $u_{{\mathrm{N}}, z}$ denote the ordinary, unstopped solution associated with $q_{{\mathrm{N}}, z}$. By \eqref{eq:uniform-coefficient-bound}, the corresponding first-order system has uniformly bounded coefficients. As in Lemma \ref{lem:stopped-continuity}, Gronwall's inequality gives a constant $C_T$, independent of ${\mathrm{N}}$ and $z$,
such that
$$
|u_{{\mathrm{N}}, z}(t)|+|u_{{\mathrm{N}}, z}'(t)|\le C_T
\qquad
(0\le t\le T).
$$
Since $j_{{\mathrm{N}}, z}$ agrees with $u_{{\mathrm{N}}, z}$ before its first positive zero and is zero thereafter,
$$
0\le j_{{\mathrm{N}}, z}(T)^\mu\le C_T^\mu.
$$
The same bound holds, after increasing $C_T$ if necessary, for $\widehat s_{K_{\mathrm{N}}(z)}(T)^\mu$. Moreover, as $\mathrm{N} \rightarrow \infty$,
$$
\widehat s_{K_{\mathrm{N}}(z)}(T) \longrightarrow \widehat s_{K(z)}(T),
$$
because $a\mapsto\widehat s_a(T)$ is continuous on $\mathbb R$. Applying dominated convergence theorem in \eqref{eq:approximating-integral-comparison} then gives
$$
\int_Xj_z(T)^\mu\, dm(z) \le \int_X\widehat s_{K(z)}(T)^\mu\, dm(z).
$$

Since $T>0$ was arbitrary, the theorem follows.
\end{proof}
\begin{lemma}\label{lem:flow-shuffle}
Let $G_t:SN\to SN$ be geodesic flow and define $K(p, v)=\frac{\operatorname{Ric}_p(v, v)}{n-1}$. Then $G_t$ preserves the normalized Liouville measure $m$ and
$$
\frac{1}{n-1}
\operatorname{Ric}_{\gamma_z(t)}(\gamma_z'(t), \gamma_z'(t))
=K(G_tz).
$$
\end{lemma}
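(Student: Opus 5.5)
The statement has two parts: invariance of $m$ under $G_t$, and the identity for the coefficient along the geodesic. I will treat them separately. The second is essentially definitional, while the first reduces to Liouville's theorem and requires care only with the normalization \eqref{eq:normalized-liouville}.

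For the identity, fix $z=(p,v)\in SN$. By definition of the geodesic flow, $G_tz=(\gamma_z(t),\gamma_z'(t))$. Since $\gamma_z$ is a unit speed geodesic, $\gamma_z'(t)$ is a unit vector at $\gamma_z(t)$, so $G_tz\in SN$. Substituting this pair into the definition \eqref{eq:K} of $K$ gives
\[
K(G_tz)=\frac{\operatorname{Ric}_{\gamma_z(t)}(\gamma_z'(t),\gamma_z'(t))}{n-1},
\]
which is exactly the coefficient appearing in \eqref{eq:scalar-jacobi}. No further argument is needed here.

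For invariance, I will invoke Liouville's theorem as cited in the introduction \cite{Paternain}. On the unit tangent bundle of a closed manifold, the Liouville measure is locally the product $d\mathrm{vol}_g(p)\,d\theta_p(v)$. It is the measure induced by the contact volume form $\alpha\wedge(d\alpha)^{n-1}$ up to a constant factor, and this form is preserved by the geodesic flow because the flow is the Reeb flow of $\alpha$. Hence $(G_t)_*m=m$, or equivalently $m(G_t^{-1}(E))=m(E)$ for every Borel set $E$. Normalizing by the constant $|N|\omega_{n-1}$ does not affect invariance.

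The only point needing mild care is matching the paper's normalization $d\mathrm{vol}_g\,d\theta_p$ with the symplectic/contact Liouville measure; they agree up to a constant factor, which is harmless. I will also record, for use with Theorem~\ref{thm:continuum-shuffle}, the hypotheses that theorem requires:
\begin{itemize}
\item $SN$ is a compact metric space, since $N$ is closed;
\item $G$ is continuous on $[0,\infty)\times SN$, since $N$ is complete, so geodesics exist for all time and depend smoothly on initial data;
\item $K$ is continuous, since $\operatorname{Ric}$ is smooth.
\end{itemize}
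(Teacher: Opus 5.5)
Your proposal is correct and matches the paper, which gives no separate proof of this lemma. The paper relies on two facts: the definition $G_tz=(\gamma_z(t),\gamma_z'(t))$, which gives the identity for $K(G_tz)$ directly, and Liouville's theorem as cited in the introduction, with the normalizing constant $|N|\omega_{n-1}$ playing no role. Your added justification is also sound: the geodesic flow is the Reeb flow of the contact form and hence preserves $\alpha\wedge(d\alpha)^{n-1}$.
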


\section{Geometric applications}\label{sec:geometric applications}
In this section, we apply the integral Jacobi comparison to derive the main geometric results of the paper. We first obtain comparison estimates for the average areas and volumes of metric spheres and balls. We then establish the averaged Ricci-spectrum and determinant bounds, from which the main volume comparison results, Theorems~\ref{thm:intro-main} and
\ref{thm:intro-main2}, follow.
\subsection{Average volumes of metric balls}
For $p\in N$ and $r\ge 0$, let
$$
B_p(r)=\{x\in N:d(p, x)\le r\},
\qquad
S_p(r)=\{x\in N:d(p, x)=r\}.
$$
These are the metric ball and metric sphere of radius $r$ centred at $p$. They need not be the images under $\exp_p$ of the corresponding ball and sphere in $T_pN$ when $r$ is greater than the injectivity radius. Let
$$
V(p, r)=|B_p(r)|,
\qquad
A(p, r)=\mathcal H^{n-1}(S_p(r)),
$$
and define their averages by
\begin{equation*}\label{eq:average-area-volume}
\overline A(r)=\fint_N A(p, r)\, d\mathrm{vol}_g(p),
\qquad
\overline V(r)=\fint_N V(p, r)\, d\mathrm{vol}_g(p).
\end{equation*}

\begin{lemma}\label{lem:average-metric-polar}
For every $r\ge 0$,
\begin{equation}\label{eq:average-metric-volume-polar}
\overline V(r)
=\omega_{n-1}\int_{SN}\int_0^r \widehat F_z(t)\, dt\, dm(z).
\end{equation}
Moreover, for almost every $r>0$,
\begin{equation}\label{eq:average-metric-area-polar}
\overline A(r)
=\omega_{n-1}\int_{SN}\widehat F_z(r)\, dm(z).
\end{equation}
In particular, $\overline V$ is absolutely continuous and
\begin{equation}\label{eq:average-volume-derivative}
\overline V'(r)=\overline A(r)
\end{equation}
for almost every $r>0$.
\end{lemma}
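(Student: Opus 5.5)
The plan is to run the argument of Lemma~\ref{lem:polar-volume} with the radius truncated at $r$ and then average over the centre. Fix $p\in N$. The exponential map $\exp_p$ is a diffeomorphism from the star-shaped domain
$$U_p=\{tv:\ v\in S_pN,\ 0\le t<c(p,v)\}$$
onto $N\setminus \mathrm{Cut}(p)$, and the cut locus has measure zero. For $tv\in U_p$ we have $d(p,\exp_p(tv))=t$. Hence $B_p(r)\setminus\mathrm{Cut}(p)$ is the image of $\{tv\in U_p:\ t\le r\}$, and polar coordinates give
$$V(p,r)=\int_{S_pN}\int_0^{\min(r,c(p,v))}F_{(p,v)}(t)\,dt\,d\theta_p(v)=\int_{S_pN}\int_0^r\widehat F_{(p,v)}(t)\,dt\,d\theta_p(v).$$
Averaging over $p$, and using \eqref{eq:normalized-liouville} together with Tonelli's theorem (the integrand is nonnegative and measurable, since $c$ is continuous on $SN$), yields \eqref{eq:average-metric-volume-polar}.

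The formula \eqref{eq:average-metric-volume-polar} shows that $\overline V(r)=\int_0^r \phi(t)\,dt$, where
$$\phi(t)=\omega_{n-1}\int_{SN}\widehat F_z(t)\,dm(z).$$
The function $\phi$ is bounded: by compactness, $F_z(t)$ is uniformly bounded for $t\le \operatorname{diam}N$, and $\widehat F_z(t)=0$ beyond that. So $\overline V$ is absolutely (indeed Lipschitz) continuous, and $\overline V'=\phi$ almost everywhere by the Lebesgue differentiation theorem. It therefore remains to show $\overline A(r)=\phi(r)$ for almost every $r$, which gives both \eqref{eq:average-metric-area-polar} and \eqref{eq:average-volume-derivative}.

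For the sphere, fix $p$ and write
$$S_p(r)=\bigl(S_p(r)\setminus\mathrm{Cut}(p)\bigr)\cup\bigl(S_p(r)\cap\mathrm{Cut}(p)\bigr).$$
The first piece is the diffeomorphic image under $\exp_p$ of $\{rv:\ r<c(p,v)\}$, a smooth hypersurface orthogonal to the radial direction. Its $\mathcal H^{n-1}$-measure is therefore
$$\int_{S_pN}\widehat F_{(p,v)}(r)\,d\theta_p(v).$$
The main obstacle is the second piece: we must show that $\mathcal H^{n-1}(S_p(r)\cap\mathrm{Cut}(p))=0$ for almost every $r$. My first approach is the coarea formula applied to the Lipschitz function $d_p=d(p,\cdot)$, which satisfies $|\nabla d_p|=1$ almost everywhere. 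This gives
$$\int_0^\infty\mathcal H^{n-1}\bigl(S_p(r)\cap\mathrm{Cut}(p)\bigr)\,dr=\int_{\mathrm{Cut}(p)}|\nabla d_p|\,d\mathrm{vol}=0,$$
since $\mathrm{Cut}(p)$ is a null set. Hence the cut contribution vanishes for almost every $r$, where the exceptional set depends on $p$. Equivalently, coarea applied to the whole ball gives $V(p,r)=\int_0^rA(p,t)\,dt$, and comparing with the polar formula identifies $A(p,t)$ with the polar integral for almost every $t$.

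To average, I will apply Tonelli on $N\times(0,\infty)$ to the nonnegative measurable function
$$(p,r)\mapsto\mathcal H^{n-1}\bigl(S_p(r)\cap\mathrm{Cut}(p)\bigr).$$
Since its $dr$-integral vanishes for every $p$, it vanishes for almost every $r$ for $\mathrm{vol}_g$-almost every $p$. Integrating the identity for $A(p,r)$ over $p$ then gives $\overline A(r)=\phi(r)$ for almost every $r$. Measurability of $(p,r)\mapsto A(p,r)$ is the technical point to check. If it is awkward to verify directly, I would sidestep it by defining $\overline A$ through the coarea identity $V(p,r)=\int_0^rA(p,t)\,dt$ and using Fubini on that identity.
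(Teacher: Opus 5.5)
Your proposal is correct and follows essentially the same route as the paper: polar coordinates up to the cut locus, averaged over $p$, give \eqref{eq:average-metric-volume-polar}, and the coarea formula for $d_p$ (yielding $V(p,r)=\int_0^r A(p,t)\,dt$) compared with the polar formula identifies $\overline A(r)$ with $\omega_{n-1}\int_{SN}\widehat F_z(r)\,dm(z)$ for almost every $r$. Your explicit splitting of $S_p(r)$ into its cut and non-cut parts is a more geometric version of that same comparison, and the joint measurability of $(p,r)\mapsto A(p,r)$ that you flag is also used implicitly in the paper's averaging step.
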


\begin{proof}
For each fixed $p\in N$, geodesic polar coordinates up to the cut locus give
$$
V(p, r) = \int_{S_pN}\int_0^r \widehat F_{(p, v)}(t)\, dt\, d\theta_p(v).
$$
Integrating over $p\in N$, dividing by $|N|$ and using \eqref{eq:normalized-liouville} proves \eqref{eq:average-metric-volume-polar}.

The distance function $d_p(x)=d(p, x)$ is Lipschitz and satisfies
$|\nabla d_p|=1$ almost everywhere. The coarea formula
therefore gives
$$V(p, r)=\int_0^r A(p, t)\, dt. $$
Averaging it over $p$ yields
$$\overline V(r)=\int_0^r \overline A(t)\, dt. $$
Comparison with \eqref{eq:average-metric-volume-polar} proves
\eqref{eq:average-metric-area-polar} and
\eqref{eq:average-volume-derivative} for almost every $r>0$.
\end{proof}
\begin{remark}
The condition ``for almost every $r$'' in \eqref{eq:average-metric-area-polar} is necessary. At exceptional radii, a metric sphere may contain part of the cut locus of positive $(n-1)$-dimensional measure. The volume comparison below nevertheless holds for every radius.
\end{remark}

Recall that $K(p, v)=\frac{\operatorname{Ric}_p(v, v)}{n-1}$. We define
\begin{align}
A_{\operatorname{Ric}}(r)
& =
\omega_{n-1}\int_{SN}
\widehat s_{K(z)}(r)^{n-1}\, dm(z),
\label{eq:directional-Ricci-area}\\
V_{\operatorname{Ric}}(r)
& =
\omega_{n-1}\int_{SN}\int_0^r
\widehat s_{K(z)}(t)^{n-1}\, dt\, dm(z).
\label{eq:directional-Ricci-volume}
\end{align}

\begin{theorem}\label{thm:average-metric-comparison}
Let $(N^n, g)$ be a closed Riemannian manifold.
Then
\begin{equation}\label{eq:average-metric-area-comparison}
\overline A(r)\le A_{\operatorname{Ric}}(r)
\end{equation}
for almost every $r>0$, and
\begin{equation}\label{eq:average-metric-volume-comparison}
\overline V(r)\le V_{\operatorname{Ric}}(r)
\end{equation}
for every $r\ge 0$.
\end{theorem}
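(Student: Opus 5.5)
The plan is to combine three ingredients: the pointwise Jacobian comparison of Proposition~\ref{prop:adapted}, the integral shuffling comparison of Theorem~\ref{thm:continuum-shuffle}, and the polar formulas of Lemma~\ref{lem:average-metric-polar}. The key pointwise claim is that for every $z\in SN$ and every $t\ge 0$,
\begin{equation*}
\widehat F_z(t)\le \widehat y_z(t)^{n-1},
\end{equation*}
where $\widehat y_z$ is the stopped solution from Definition~\ref{def:scalar-jacobi}.

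To prove this claim, first take $t<c(z)$. Then $t$ lies before the first conjugate time along $\gamma_z$. I will show that $t$ also lies before the first zero $r(z)$ of $y_z$. Suppose instead that $r(z)\le t$. On $(0,r(z))$, inequality \eqref{eq:adapted-first} gives $0<F_z\le y_z^{n-1}$, and letting the argument tend to $r(z)$ gives $F_z(r(z))=0$. Hence $r(z)$ is a conjugate time, which contradicts $r(z)\le t<c(z)$. Therefore $\widehat y_z(t)=y_z(t)$, and \eqref{eq:adapted-first} gives $\widehat F_z(t)=F_z(t)\le y_z(t)^{n-1}$. For $t\ge c(z)$ we have $\widehat F_z(t)=0\le \widehat y_z(t)^{n-1}$. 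This case needs $\widehat y_z\ge 0$, which holds by construction: $y_z>0$ before its first zero, and $\widehat y_z=0$ afterwards.

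Next, by Lemma~\ref{lem:flow-shuffle}, the equation \eqref{eq:scalar-jacobi} is exactly $j''+K(G_tz)j=0$, so $\widehat y_z=j_z$ in the notation of Theorem~\ref{thm:continuum-shuffle}. I apply that theorem with $X=SN$, the flow $\Phi_t=G_t$, the coefficient $K$, and $\mu=n-1$, which is a positive integer since $n\ge 2$. It yields, for every $t\ge 0$,
\begin{equation*}
\int_{SN}\widehat F_z(t)\,dm(z)\le\int_{SN}\widehat y_z(t)^{n-1}\,dm(z)\le\int_{SN}\widehat s_{K(z)}(t)^{n-1}\,dm(z).
\end{equation*}
The hypotheses of the theorem hold: $SN$ is a compact metric space, $G_t$ is a continuous flow that preserves $m$ by Liouville's theorem, and $K$ is continuous.

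Multiplying by $\omega_{n-1}$ and using \eqref{eq:average-metric-area-polar} gives \eqref{eq:average-metric-area-comparison} for almost every $r$. Integrating the same inequality over $t\in[0,r]$ and applying \eqref{eq:average-metric-volume-polar} with Tonelli's theorem gives \eqref{eq:average-metric-volume-comparison} for every $r$. All integrands are nonnegative and measurable: $\widehat F_z(t)$ is measurable in $(z,t)$ because the cut time is continuous, and $\widehat s_{K(z)}(t)$ is continuous in $(z,t)$.

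The main obstacle is the first step, namely showing that the cut time occurs before the first zero of $y_z$, so that the stopping conventions for $\widehat F_z$ and $\widehat y_z$ are compatible. The limiting argument with \eqref{eq:adapted-first} handles this. The point is that the Jacobian comparison must be valid on all of $[0,c(z))$, and not only up to the first zero of $y_z$.
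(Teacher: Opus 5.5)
Your proposal is correct and follows the paper's proof: the pointwise bound $\widehat F_z(t)\le \widehat y_z(t)^{n-1}$ from Proposition~\ref{prop:adapted}, then Theorem~\ref{thm:continuum-shuffle} on $(SN,m)$ with $\Phi_t=G_t$ and $\mu=n-1$, and finally the polar formulas of Lemma~\ref{lem:average-metric-polar}. Your limiting argument, which shows that a zero of $y_z$ at or before $t<c(z)$ would force a conjugate point before the cut time, simply spells out the paper's one-line remark that the first zero of $y_z$ cannot occur before the cut time.
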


\begin{proof}
Let $y_z$ be the solution of \eqref{eq:scalar-jacobi}, and let
$\widehat y_z$ be its stopped version. Proposition~\ref{prop:adapted} gives
$$F_z(t)\le y_z(t)^{n-1}$$
before the first conjugate point and the first positive zero of $y_z$.
This inequality also shows that the first positive zero of $y_z$ cannot occur before the cut time.
It follows that
\begin{equation}\label{ineq:F y}
\widehat F_z(t)\le \widehat y_z(t)^{n-1}
\end{equation}
for every $z\in SN$ and every $t\ge 0$.

By Lemma~\ref{lem:flow-shuffle}, the coefficient in
\eqref{eq:scalar-jacobi} is $K(G_tz)$ and the geodesic flow preserves the normalized Liouville measure. Theorem~\ref{thm:continuum-shuffle}, applied
with $X=(SN, m)$, $\Phi_t=G_t$ and $\mu=n-1$, then gives
\begin{equation}\label{eq:average-stopped-y-comparison}
\int_{SN}\widehat y_z(t)^{n-1}\, dm(z)
\le
\int_{SN}\widehat s_{K(z)}(t)^{n-1}\, dm(z)
\end{equation}
for every $t\ge 0$.

For almost every $r>0$, Lemma~\ref{lem:average-metric-polar},
\eqref{ineq:F y}, and
\eqref{eq:average-stopped-y-comparison} give
\begin{align*}
\overline A(r)
& =
\omega_{n-1}\int_{SN}\widehat F_z(r)\, dm(z)\\
& \le
\omega_{n-1}\int_{SN}\widehat y_z(r)^{n-1}\, dm(z)\\
& \le
\omega_{n-1}\int_{SN}
\widehat s_{K(z)}(r)^{n-1}\, dm(z)
=
A_{\operatorname{Ric}}(r).
\end{align*}
This proves \eqref{eq:average-metric-area-comparison}. Integrating the same
inequalities in $t$ over $[0, r]$ and using the Fubini--Tonelli theorem gives
\begin{align*}
\overline V(r)
& =
\omega_{n-1}\int_0^r\int_{SN}
\widehat F_z(t)\, dm(z)\, dt\\
& \le
\omega_{n-1}\int_0^r\int_{SN}
\widehat y_z(t)^{n-1}\, dm(z)\, dt\\
& \le
\omega_{n-1}\int_{SN}\int_0^r
\widehat s_{K(z)}(t)^{n-1}\, dt\, dm(z),
\end{align*}
which is \eqref{eq:average-metric-volume-comparison}.
\end{proof}

When the Ricci curvature is positive, the right-hand sides of
\eqref{eq:average-metric-area-comparison} and
\eqref{eq:average-metric-volume-comparison} can be written more explicitly.

\begin{corollary}\label{cor:positive-Ricci-metric-comparison}
Under the assumptions of Theorem~\ref{thm:average-metric-comparison},
suppose in addition that $\operatorname{Ric}>0$. Define
$$
\mathcal I_n(\rho)=\int_0^\rho\sin^{n-1}u\, du,
\qquad 0\le \rho\le \pi.
$$
Then, for almost every $r>0$,
\begin{align}
\overline A(r)
\le {}&
\omega_{n-1}\int_{SN}
K(z)^{-\frac{n-1}{2}}
\sin^{n-1}\left(\sqrt{K(z)}\, r\right)
\mathbf 1_{\{\sqrt{K(z)}\, r<\pi\}}
\, dm(z),
\label{eq:positive-Ricci-average-area}
\end{align}
and, for every $r\ge 0$,
\begin{align}
\overline V(r)
\le {}&
\omega_{n-1}\int_{SN}
K(z)^{-\frac n2}
\mathcal I_n\left(\min\{\sqrt{K(z)}\, r, \pi\}\right)
\, dm(z).
\label{eq:positive-Ricci-average-volume}
\end{align}
\end{corollary}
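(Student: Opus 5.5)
The plan is to start from Theorem~\ref{thm:average-metric-comparison} and simply evaluate the model quantities $A_{\operatorname{Ric}}(r)$ and $V_{\operatorname{Ric}}(r)$ in \eqref{eq:directional-Ricci-area}--\eqref{eq:directional-Ricci-volume} explicitly under the extra hypothesis $\operatorname{Ric}>0$. Since $\operatorname{Ric}>0$ and $SN$ is compact, $K(z)=\operatorname{Ric}_p(v,v)/(n-1)>0$ for every $z=(p,v)\in SN$. Hence the stopped solution $\widehat s_{K(z)}$ is given by \eqref{eq:stopped-sine} with $a=K(z)$.

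For the area bound I insert \eqref{eq:stopped-sine} into \eqref{eq:directional-Ricci-area}. This gives
$$\widehat s_{K(z)}(r)^{n-1}=K(z)^{-\frac{n-1}{2}}\sin^{n-1}\bigl(\sqrt{K(z)}\,r\bigr)\mathbf 1_{\{\sqrt{K(z)}\,r<\pi\}}.$$
Combining this with \eqref{eq:average-metric-area-comparison} yields \eqref{eq:positive-Ricci-average-area} for almost every $r>0$. Measurability in $z$ is not an issue, since $K$ is continuous and the indicator is of an open set.

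For the volume bound I fix $z$ and compute $\int_0^r\widehat s_{K(z)}(t)^{n-1}\,dt$. Because $\widehat s_{K(z)}$ vanishes for $t\ge\pi/\sqrt{K(z)}$, the integral runs only up to $\min\{r,\pi/\sqrt{K(z)}\}$. Substituting $u=\sqrt{K(z)}\,t$, so that $dt=K(z)^{-1/2}\,du$, gives
$$\int_0^r\widehat s_{K(z)}(t)^{n-1}\,dt=K(z)^{-\frac{n-1}{2}}K(z)^{-\frac12}\int_0^{\min\{\sqrt{K(z)}\,r,\pi\}}\sin^{n-1}u\,du=K(z)^{-\frac n2}\mathcal I_n\bigl(\min\{\sqrt{K(z)}\,r,\pi\}\bigr).$$
Integrating this over $SN$ against $m$ and applying \eqref{eq:average-metric-volume-comparison} gives \eqref{eq:positive-Ricci-average-volume} for every $r\ge0$.

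There is no real obstacle here. The only points needing care are that the stopping time of $\widehat s_a$ for $a>0$ is exactly $\pi/\sqrt a$, which accounts for the $\min$ and the indicator, and that the change of variables produces the exponent $-n/2$.
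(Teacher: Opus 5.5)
Your proposal is correct and follows essentially the same route as the paper: substitute the explicit stopped sine \eqref{eq:stopped-sine} (valid since $K(z)>0$) into $A_{\operatorname{Ric}}$ and $V_{\operatorname{Ric}}$, use the change of variables $u=\sqrt{K(z)}\,t$ to obtain $K(z)^{-\frac n2}\mathcal I_n(\min\{\sqrt{K(z)}\,r,\pi\})$, and conclude via Theorem~\ref{thm:average-metric-comparison}. Your added remarks on the stopping time $\pi/\sqrt{a}$ and on measurability are accurate and simply make explicit what the paper leaves implicit.
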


\begin{proof}
For $a>0$, the stopped constant-coefficient solution $\widehat s_a$ is given by \eqref{eq:stopped-sine}.
Substitution into \eqref{eq:directional-Ricci-area} proves
\eqref{eq:positive-Ricci-average-area}. Moreover, the change of variable
$u=\sqrt a\, t$ gives
$$
\int_0^r\widehat s_a(t)^{n-1}\, dt
=
a^{-\frac n2}
\mathcal I_n\left(\min\{\sqrt a\, r, \pi\}\right).
$$
Substitution into \eqref{eq:directional-Ricci-volume} proves
\eqref{eq:positive-Ricci-average-volume}.
\end{proof}

Whenever $s_k>0$ on $(0, r]$, we define
$$
A_k(r)=\omega_{n-1}F_k(r)
=\omega_{n-1}s_k(r)^{n-1},
\qquad
V_k(r)=\int_0^r A_k(t)\, dt.
$$

For $r>0$ such that $s_k>0$ on $(0, r]$, define the positive functions
$$
\tau_k(r)
=
\begin{cases}
\dfrac{1-\sqrt{k}\, r\cot(\sqrt{k}\, r)}{2k},
& k>0, \\
\dfrac{r^2}{6},
& k=0, \\
\dfrac{1-\sqrt{-k}\, r\coth(\sqrt{-k}\, r)}{2k},
& k<0.
\end{cases}
$$
Indeed, these functions are obtained from the equation
$\tau_k(r)=\int_0^r \int_0^\tau \frac{s_k(t)^2}{s_k(\tau)^2} d t d \tau$, and are introduced in \cite{Kwong2025}.
For $x\ge 0$, define
\begin{equation*}\label{eq:definition-F-n}
\mathcal F_n(x)
=
\frac{
\displaystyle
\int_{-1}^1
e^{-xu^2}(1-u^2)^{\frac{n-3}{2}}\, du
}{
\displaystyle
\int_{-1}^1
(1-u^2)^{\frac{n-3}{2}}\, du
}\le1.
\end{equation*}
Thus $\mathcal F_n(0)=1$, and $\mathcal F_n$ is strictly decreasing on
$[0, \infty)$.

\begin{lemma}\label{lem:exponential-Ricci-trace}
Let $L$ be a nonnegative self-adjoint endomorphism of $\mathbb R^n$. Then
\begin{equation}\label{eq:exponential-Ricci-trace}
\frac{1}{\omega_{n-1}}
\int_{\mathbb S^{n-1}}
e^{- \langle L\theta, \theta\rangle}\, d\theta
\le
\mathcal F_n\left(\, \operatorname{tr}L\right).
\end{equation}
Moreover,
\begin{equation}\label{eq:F-n-elementary-bound}
\mathcal F_n(x)
\le
\left(1+\frac{2x}{n}\right)^{-\frac12}.
\end{equation}
\end{lemma}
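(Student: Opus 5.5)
We prove both inequalities by diagonalizing $L$ and reducing everything to the law of a single coordinate $\theta_1$ on $\mathbb S^{n-1}$. The first inequality \eqref{eq:exponential-Ricci-trace} comes from a convexity (extreme point) argument on the eigenvalues of $L$. The second inequality \eqref{eq:F-n-elementary-bound} comes from representing $\theta$ through a standard Gaussian vector and applying Jensen's inequality in the radial variable.

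\emph{Step 1: reduction to eigenvalues.} Choose an orthonormal eigenbasis of $L$ with eigenvalues $\lambda_1,\dots,\lambda_n\ge 0$, and put $x=\operatorname{tr}L=\sum_i\lambda_i$. Since $d\theta$ is rotation invariant, the left-hand side of \eqref{eq:exponential-Ricci-trace} equals
$$
\Psi(\lambda):=\frac{1}{\omega_{n-1}}\int_{\mathbb S^{n-1}}\exp\Bigl(-\sum_{i=1}^n\lambda_i\theta_i^2\Bigr)\,d\theta .
$$
For each fixed $\theta$, the integrand is the exponential of a linear function of $\lambda$, hence convex in $\lambda$. Therefore $\Psi$ is a convex function of $\lambda$.

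\emph{Step 2: the extreme point.} Restrict $\Psi$ to the simplex $\Delta_x=\{\lambda_i\ge0,\ \sum_i\lambda_i=x\}$. A convex function on a compact convex polytope attains its maximum at a vertex, and the vertices of $\Delta_x$ are the points $x e_i$. By the symmetry of $d\theta$ under permutations of coordinates, all these vertices give the same value, so
$$
\Psi(\lambda)\le \Psi(xe_1)=\frac{1}{\omega_{n-1}}\int_{\mathbb S^{n-1}}e^{-x\theta_1^2}\,d\theta .
$$
The pushforward of the normalized measure on $\mathbb S^{n-1}$ under $\theta\mapsto\theta_1=u$ has density proportional to $(1-u^2)^{\frac{n-3}{2}}$ on $[-1,1]$ (this is the standard slicing formula). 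Hence $\Psi(xe_1)=\mathcal F_n(x)$, which proves \eqref{eq:exponential-Ricci-trace}.

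\emph{Step 3: the elementary bound.} Let $g$ be a standard Gaussian vector in $\mathbb R^n$. Write $g=|g|\,\theta$, where $\theta=g/|g|$ is uniform on $\mathbb S^{n-1}$ and independent of $|g|$, and $\mathbb E|g|^2=n$. The one-dimensional Gaussian integral gives, for $s=x/n$,
$$
\mathbb E\,e^{-s g_1^2}=(1+2s)^{-\frac12}=\Bigl(1+\frac{2x}{n}\Bigr)^{-\frac12}.
$$
Now condition on $\theta$. The map $R\mapsto e^{-s\theta_1^2R}$ is convex, so Jensen's inequality in the independent variable $R=|g|^2$ gives
$$
\mathbb E\bigl[e^{-s\theta_1^2|g|^2}\,\big|\,\theta\bigr]\ge e^{-s\theta_1^2\,n}=e^{-x\theta_1^2}.
$$
Taking expectation over $\theta$ yields $\mathcal F_n(x)=\mathbb E\,e^{-x\theta_1^2}\le(1+2x/n)^{-1/2}$, which is \eqref{eq:F-n-elementary-bound}.

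The only step needing care is the vertex argument in Step 2, namely checking convexity of $\Psi$ on $\Delta_x$. Since $\Psi$ is an average of exponentials of functions linear in $\lambda$, this is immediate.
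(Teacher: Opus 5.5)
Your proof is correct. Steps 1--2 coincide with the paper's argument: convexity of the eigenvalue function as an average of exponentials of linear forms, comparison with the vertices $xe_i$ of the simplex, permutation symmetry, and the slicing formula for $\theta_1$.

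For \eqref{eq:F-n-elementary-bound} the paper looks different on the surface. It uses $e^t\ge 1+t$ to get the pointwise bound $e^{-xu^2}\le\bigl(1+\frac{2x}{n}u^2\bigr)^{-n/2}$. It then evaluates the spherical average of the right-hand side exactly via Lemma~\ref{lem:spherical-det} with $L=\operatorname{diag}\bigl(1+\frac{2x}{n},1,\ldots,1\bigr)$.

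Your Step 3 is the same estimate in probabilistic form. Since $\mathbb E\,e^{-cR}=(1+2c)^{-n/2}$ for $R=|g|^2\sim\chi^2_n$, your conditional Jensen inequality with $c=\frac{x}{n}\theta_1^2$ is precisely that pointwise bound with $u=\theta_1$. Likewise, your identity $\mathbb E\,e^{-sg_1^2}=(1+2s)^{-1/2}$ is precisely Lemma~\ref{lem:spherical-det} for that diagonal $L$. The paper proves that lemma by the same Gaussian computation, done in polar coordinates.

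Your packaging buys self-containedness. There is no forward reference to the determinant lemma, and no Gamma-function normalization of $\omega_{n-1}$ is needed, since everything is a probability measure. It also shows directly that the constant $\frac{2x}{n}$ comes from $\mathbb E|g|^2=n$. The paper's version stays within elementary calculus and reuses a lemma it needs anyway for Theorem~\ref{thm:determinant}.
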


\begin{proof}
Let $\lambda_1, \ldots, \lambda_n\ge 0$ be the eigenvalues of $L$, and
$S=\sum_{i=1}^n\lambda_i=\operatorname{tr}L$.
For $\lambda=(\lambda_1, \ldots, \lambda_n)$, define
$$
\Phi(\lambda)
=
\frac{1}{\omega_{n-1}}
\int_{\mathbb S^{n-1}}
\exp\left(- \sum_{i=1}^n\lambda_i\theta_i^2\right)\, d\theta.
$$
The function $\Phi$ is convex because the exponential of an affine function
of $\lambda$ is convex, and integration preserves convexity. It is also
symmetric in $\lambda_1, \ldots, \lambda_n$.

If $S=0$, the result is immediate. Suppose that $S>0$.
We can write
$$
(\lambda_1, \ldots, \lambda_n)
=
\sum_{i=1}^n\frac{\lambda_i}{S}
(0, \ldots, 0, \overbrace{S}^{\textrm{$i$-th}}, 0, \ldots, 0),
$$
By convexity,
$$
\Phi(\lambda_1, \ldots, \lambda_n)
\le
\sum_{i=1}^n\frac{\lambda_i}{S}\Phi(S, 0, \ldots, 0)
=
\Phi(S, 0, \ldots, 0).
$$
The standard integration formulas (cf. \cite[Equation~(1.16)]{AtkinsonHan}) and $\omega_{n-1}=\omega_{n-2}\int_{-1}^1
(1-u^2)^{\frac{n-3}{2}}\, du$ give
$$
\Phi(S, 0, \ldots, 0)
=
\frac{
\displaystyle
\int_{-1}^1
e^{- Su^2}(1-u^2)^{\frac{n-3}{2}}\, du
}{
\displaystyle
\int_{-1}^1
(1-u^2)^{\frac{n-3}{2}}\, du
}
=
\mathcal F_n(S),
$$
which proves \eqref{eq:exponential-Ricci-trace}.

To prove \eqref{eq:F-n-elementary-bound}, observe that since $e^y\ge 1+y$,
$$
e^{-xu^2}=
\left(e^{-\frac{2x}{n }u^2}\right)^{-\frac{n}{2}}
\le \left(1+\frac{2x}{n}u^2\right)^{-\frac n2}.
$$
Therefore
\begin{align*}
\mathcal F_n(x)
\le \frac{ \displaystyle \int_{-1}^1 \left(1+\frac{2x}{n}u^2\right)^{-\frac n2} (1-u^2)^{\frac{n-3}{2}}\, du }{ \displaystyle \int_{-1}^1 (1-u^2)^{\frac{n-3}{2}}\, du }
=\left(1+\frac{2x}{n}\right)^{-\frac12}.
\end{align*}
The last equality follows from the integration formula \cite[Equation~(1.16)]{AtkinsonHan} and
Lemma~\ref{lem:spherical-det}, applied to $L=\operatorname{diag}\left(1+\frac{2x}{n}, 1, \ldots, 1\right)$, for which $\langle L\theta, \theta\rangle=1+\frac{2x}{n}\theta_1^2$ and $\det L=1+\frac{2x}{n}$.
\end{proof}

\begin{proposition}\label{prop:K-fixed-model}
Let $k\in \mathbb R$. Suppose $\mathrm{Ric}\ge (n-1)k$ on $N$ and $s_k>0$ on $(0, r]$. Then for any $z\in SN$,
\begin{equation}\label{eq:K-fixed-model}
\widehat s_{K(z)}(r)^{n-1}
\le
F_k(r)
\exp\left(
-(n-1)\tau_k(r)(K(z)-k)
\right).
\end{equation}
\end{proposition}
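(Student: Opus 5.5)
Write $a=K(z)\ge k$, since $\operatorname{Ric}\ge (n-1)k$ gives $K(z)\ge k$ for every unit vector. The plan is to treat $\widehat s_a$ as a constant-coefficient instance of the scalar Jacobi comparison in Proposition~\ref{prop:adapted}. Concretely, I will redo the Riccati argument from the second half of that proof, replacing $\operatorname{Ric}(\gamma_z',\gamma_z')$ by the constant $(n-1)a$.

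First I dispose of the trivial case. If $\widehat s_a(r)=0$, which happens when $a>0$ and $\sqrt a\,r\ge\pi$, then the left-hand side vanishes. The right-hand side is positive because $s_k(r)>0$, so there is nothing to prove. Otherwise $\widehat s_a=s_a>0$ on $(0,r]$, since for $a\le 0$ there is no positive zero, and for $a>0$ we have $r<\pi/\sqrt a$.

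In that case, set $H_a=(n-1)s_a'/s_a$ and $H_k=(n-1)s_k'/s_k$. Both are defined on $(0,r]$ and satisfy
$$H_a'+\frac{H_a^2}{n-1}+(n-1)a=0,\qquad H_k'+\frac{H_k^2}{n-1}+(n-1)k=0.$$
Let $e=H_a-H_k$. Exactly as in Proposition~\ref{prop:adapted},
$$(s_k^2 e)'=-s_k^2\Bigl[(n-1)(a-k)+\frac{e^2}{n-1}\Bigr].$$
Moreover $s_k^2e\to0$ as $t\downarrow 0$, because $e(t)=O(t)$. Integrating twice, using $(\log(s_a^{n-1}/F_k))'=e$ and $s_a^{n-1}/F_k\to1$ at $0$, gives
$$\log\frac{s_a(r)^{n-1}}{F_k(r)}=-\int_0^r\!\!\int_0^\tau\frac{s_k(t)^2}{s_k(\tau)^2}\Bigl[(n-1)(a-k)+\frac{e(t)^2}{n-1}\Bigr]dt\,d\tau.$$
Dropping the nonnegative $e^2$ term and using the identity $\tau_k(r)=\int_0^r\int_0^\tau s_k(t)^2/s_k(\tau)^2\,dt\,d\tau$ yields \eqref{eq:K-fixed-model}.

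Alternatively, this is literally \eqref{eq:comparison-hierarchy} applied to a model in which the directional Ricci curvature is constantly $(n-1)a$. Either way, the computation is formally identical to the one already given.

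The only point requiring care is the positivity of $s_a$ on $(0,r]$, which is needed for the logarithms and for $H_a$ to be defined. This is exactly what the case split above ensures. The boundary behaviour at $t=0$ is routine from the expansions $s_a(t),\,s_k(t)=t+O(t^3)$. So I expect no real obstacle beyond that case distinction; verifying the formula for $\tau_k$ is only needed if one wants the explicit form, and the stated double-integral identity suffices.
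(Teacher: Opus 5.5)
Your proof is correct and follows the paper's argument. As in the paper, you run the Riccati comparison for $e=(n-1)(s_a'/s_a-s_k'/s_k)$ against the model $s_k$, integrate twice, drop the $e^2$ term, and dispose trivially of the case $\widehat s_{K(z)}(r)=0$. The observation $a=K(z)\ge k$ is never actually used, neither by you nor by the paper, since the constant term $(n-1)(a-k)$ integrates exactly to $(n-1)\tau_k(r)(a-k)$.
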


\begin{proof}
Fix $z\in SN$. Before the first positive zero of $s_{K(z)}$, let
$e_z = (n-1)\left(\frac{s_{K(z)}'}{s_{K(z)}} - \frac{s_k'}{s_k} \right)$.
The equations satisfied by $s_{K(z)}$ and $s_k$ give
$$
e_z' + 2\frac{s_k'}{s_k}e_z + \frac{e_z^2}{n-1} + (n-1)(K(z)-k) = 0.
$$
Proceeding as in the proof of Proposition~\ref{prop:adapted}, we obtain
\begin{align*}
\log\frac{s_{K(z)}(r)^{n-1}}{F_k(r)}
={}&
-\int_0^r\int_0^\tau
\frac{s_k(t)^2}{s_k(\tau)^2}
\left(
(n-1)(K(z)-k)+\frac{e_z(t)^2}{n-1}
\right)\, dt\, d\tau\\
\le {}&
-(n-1)\tau_k(r)(K(z)-k).
\end{align*}
Exponentiating proves \eqref{eq:K-fixed-model} before the first positive
zero of $s_{K(z)}$. After that zero, $\widehat s_{K(z)}(r)=0$ by
definition, so the inequality continues to hold.
\end{proof}

We now prove an averaged volume comparison theorem that incorporates the scalar-curvature excess in addition to the Ricci curvature lower bound. Under $\operatorname{Ric}\ge (n-1)kg$, the quantity $R_k=R-n(n-1)k$ is nonnegative, and the estimate below quantitatively refines the Bishop--Gromov bound for the average volumes of balls whenever this excess is not identically zero.
\begin{theorem}\label{thm:scalar-excess-metric-balls}
Let $(N^n, g)$ be a closed Riemannian manifold satisfying $\operatorname{Ric}\ge (n-1)kg$,
and define $R_k=R-n(n-1)k$.
Let $r>0$ be such that $s_k>0$ on $(0, r]$. Then, for almost every such $r$,
\begin{align}
\overline A(r)
& \le
A_k(r)\fint_N
\mathcal F_n\left(\tau_k(r)R_k(p)\right)\, d\mathrm{vol}_g(p)
\label{eq:scalar-excess-area}\\
& \le
A_k(r)\fint_N
\left(
1+\frac{2\tau_k(r)}{n}R_k(p)
\right)^{-\frac12}
\, d\mathrm{vol}_g(p).
\label{eq:elementary-scalar-excess-area}
\end{align}
For every such $r$,
\begin{align}
\overline V(r)
& \le
\int_0^r
A_k(t)\fint_N
\mathcal F_n\left(\tau_k(t)R_k(p)\right)
\, d\mathrm{vol}_g(p)\, dt
\label{eq:scalar-excess-volume}\\
& \le
\int_0^r
A_k(t)\fint_N
\left(
1+\frac{2\tau_k(t)}{n}R_k(p)
\right)^{-\frac12}
\, d\mathrm{vol}_g(p)\, dt.
\label{eq:elementary-scalar-excess-volume}
\end{align}
\end{theorem}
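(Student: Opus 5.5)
The plan is to chain together results already established in the paper. I start from Theorem~\ref{thm:average-metric-comparison}: for almost every $r>0$,
$$\overline A(r)\le A_{\operatorname{Ric}}(r)=\omega_{n-1}\int_{SN}\widehat s_{K(z)}(r)^{n-1}\,dm(z).$$
The radius $r$ satisfies $s_k>0$ on $(0,r]$, and $\operatorname{Ric}\ge (n-1)kg$. So Proposition~\ref{prop:K-fixed-model} bounds the integrand pointwise:
$$\widehat s_{K(z)}(r)^{n-1}\le F_k(r)\exp\bigl(-(n-1)\tau_k(r)(K(z)-k)\bigr).$$
Since $A_k(r)=\omega_{n-1}F_k(r)$, this gives
$$\overline A(r)\le A_k(r)\int_{SN}\exp\bigl(-(n-1)\tau_k(r)(K(z)-k)\bigr)\,dm(z).$$

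Next I disintegrate the normalized Liouville measure \eqref{eq:normalized-liouville} into $\fint_N$ over $p$ and $\frac{1}{\omega_{n-1}}\int_{S_pN}d\theta_p$ over directions. At a fixed $p$, identify $T_pN\cong\mathbb R^n$ by an orthonormal frame and set
$$L_p=\tau_k(r)\bigl(\operatorname{Ric}_p-(n-1)k\,g_p\bigr)=\tau_k(r)\operatorname{Ric}_k\big|_p.$$
This $L_p$ is self-adjoint, and it is nonnegative because $\operatorname{Ric}\ge(n-1)kg$ and $\tau_k(r)>0$. For a unit vector $v$,
$$(n-1)\tau_k(r)(K(p,v)-k)=\langle L_pv,v\rangle.$$
Hence the inner spherical average is exactly the left side of \eqref{eq:exponential-Ricci-trace}. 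Lemma~\ref{lem:exponential-Ricci-trace} bounds it by $\mathcal F_n(\operatorname{tr}L_p)$, and
$$\operatorname{tr}L_p=\tau_k(r)\bigl(R(p)-n(n-1)k\bigr)=\tau_k(r)R_k(p).$$
This proves \eqref{eq:scalar-excess-area}. The elementary estimate \eqref{eq:F-n-elementary-bound} with $x=\tau_k(r)R_k(p)\ge0$ then gives \eqref{eq:elementary-scalar-excess-area}. To be safe, I will check that $\tau_k(r)>0$ for every admissible $r$ in all three cases of $k$; this holds, for instance, from the double-integral representation of $\tau_k$.

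For the volume statements I will not integrate the almost-everywhere area bound, since that bound is only available for a.e. $r$. Instead I use the pointwise-in-$t$ chain from the proof of Theorem~\ref{thm:average-metric-comparison}:
$$\overline V(r)=\omega_{n-1}\int_0^r\int_{SN}\widehat F_z(t)\,dm\,dt\le\omega_{n-1}\int_0^r\int_{SN}\widehat s_{K(z)}(t)^{n-1}\,dm\,dt.$$
This holds for every $r$ by Lemma~\ref{lem:average-metric-polar} and \eqref{eq:average-stopped-y-comparison}, which is valid for all $t$. Every $t\in(0,r]$ also satisfies $s_k>0$ on $(0,t]$. So for each such $t$ I apply the same pointwise argument (Proposition~\ref{prop:K-fixed-model}, then Lemma~\ref{lem:exponential-Ricci-trace}) to the inner integral and then integrate in $t$ by Fubini--Tonelli. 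This gives \eqref{eq:scalar-excess-volume} and \eqref{eq:elementary-scalar-excess-volume} for every admissible $r$.

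There is no deep obstacle here, only bookkeeping. The delicate points are three:
\begin{itemize}
\item the identification of the inner direction average with the normalized spherical integral in Lemma~\ref{lem:exponential-Ricci-trace}, including the $\omega_{n-1}$ normalization;
\item confirming $L_p\ge0$, which is needed for the convexity argument;
\item measurability of $p\mapsto R_k(p)$ under the disintegration, which is immediate from continuity.
\end{itemize}
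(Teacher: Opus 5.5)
Your proposal is correct and follows essentially the same route as the paper: Theorem~\ref{thm:average-metric-comparison}, then Proposition~\ref{prop:K-fixed-model} pointwise, then Lemma~\ref{lem:exponential-Ricci-trace} at each point. Your operator $\tau_k(r)\operatorname{Ric}_k$ is exactly the paper's $(n-1)\tau_k(r)L_p$ with $L_p=\operatorname{Ric}^\sharp/(n-1)-kI$.

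For the volume bounds, the paper simply integrates the area estimates using $\overline V(r)=\int_0^r\overline A(t)\,dt$. This is legitimate because an inequality valid for almost every $t$ can be integrated, so your detour through the pointwise-in-$t$ chain is unnecessary, though equally valid.
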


\begin{proof}
Theorem \ref{thm:average-metric-comparison} gives
\begin{equation}\label{eq:area-before-scalar-excess}
\overline A(r)
\le
\omega_{n-1}\int_{SN}
\widehat s_{K(z)}(r)^{n-1}\, dm(z)
\end{equation}
for almost every $r$.

Proposition \ref{prop:K-fixed-model} gives
\begin{equation}\label{eq:constant-exponential-comparison}
\widehat s_{K(z)}(r)^{n-1}
\le
F_k(r)
\exp\left(
-(n-1)\tau_k(r)(K(z)-k)
\right).
\end{equation}

At each $p\in N$, define the nonnegative self-adjoint endomorphism $L_p = \frac{\operatorname{Ric}_p^{\sharp}}{n-1}-kI$.
Then $\operatorname{tr} L_p=\frac{R_k(p)}{n-1}$.
Applying Lemma~\ref{lem:exponential-Ricci-trace} with $(n-1)\tau_k(r)L_p$ gives
\begin{align*}
& \frac{1}{\omega_{n-1}}
\int_{S_pN}
\exp\left(
-(n-1)\tau_k(r)(K(p, v)-k)
\right)\, d\theta_p(v)\\
\le&
\mathcal F_n\left(
(n-1)\tau_k(r)\operatorname{tr}L_p
\right)
=
\mathcal F_n\left(\tau_k(r)R_k(p)\right).
\end{align*}
Combining this inequality with
\eqref{eq:area-before-scalar-excess},
\eqref{eq:constant-exponential-comparison}, and
$A_k(r)=\omega_{n-1}F_k(r)$ gives
$$
\overline A(r)
\le
A_k(r)\fint_N
\mathcal F_n\left(\tau_k(r)R_k(p)\right)\, d\mathrm{vol}_g(p).
$$
This proves \eqref{eq:scalar-excess-area}.
Equation \eqref{eq:elementary-scalar-excess-area} follows from
\eqref{eq:F-n-elementary-bound}.

Since
$$\overline V(r)=\int_0^r \overline A(t)\, dt, $$
integration of the two area estimates proves
\eqref{eq:scalar-excess-volume} and
\eqref{eq:elementary-scalar-excess-volume}.
\end{proof}
\subsection{Total volume estimate}

The total-volume estimate is obtained by allowing the radius in
\eqref{eq:positive-Ricci-average-volume} to tend to infinity.

\begin{proposition}\label{prop:spectrum}
Let $(N^n, g)$ be a closed connected Riemannian manifold and suppose that
$\operatorname{Ric}>0$. Then
\begin{equation}\label{eq:spectrum-bound}
\frac{|N|}{|\mathbb S^n|}
\le
\int_{SN}
\left(
\frac{\operatorname{Ric}_p(v, v)}{n-1}
\right)^{-\frac n2}
\, dm(p, v).
\end{equation}
\end{proposition}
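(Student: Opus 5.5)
The plan is to let $r\to\infty$ in the averaged volume bound \eqref{eq:positive-Ricci-average-volume} of Corollary~\ref{cor:positive-Ricci-metric-comparison}. The left side tends to $|N|$, and the right side tends to an explicit spherical constant times the integral in \eqref{eq:spectrum-bound}.

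First, since $N$ is closed, its diameter $D$ is finite. For every $r\ge D$ we have $B_p(r)=N$ for all $p$, so $\overline V(r)=|N|$. This also follows from Lemma~\ref{lem:average-metric-polar} together with Lemma~\ref{lem:polar-volume}, because $\widehat F_z(t)=0$ for $t\ge c(z)$ and $c(z)\le D$.

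Second, by compactness and $\operatorname{Ric}>0$ there is $\kappa>0$ with $K(z)\ge\kappa$ for all $z\in SN$. Hence once $r\ge \pi/\sqrt{\kappa}$ we have $\min\{\sqrt{K(z)}\,r,\pi\}=\pi$ for every $z$. For such $r$, \eqref{eq:positive-Ricci-average-volume} reads
$$
|N|\le \omega_{n-1}\,\mathcal I_n(\pi)\int_{SN}K(z)^{-\frac n2}\,dm(z).
$$
No limiting argument is needed: taking $r\ge\max\{D,\pi/\sqrt\kappa\}$ gives the bound directly.

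Finally, we identify the constant. Polar coordinates on the unit sphere give $\omega_{n-1}\mathcal I_n(\pi)=\omega_{n-1}\int_0^\pi\sin^{n-1}u\,du=|\mathbb S^n|$. Dividing by $|\mathbb S^n|$ and recalling $K(p,v)=\operatorname{Ric}_p(v,v)/(n-1)$ yields \eqref{eq:spectrum-bound}.

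The only point requiring any care is the uniform positive lower bound on $K$. Without it, directions with small $K(z)$ would not yet be "stopped" at a given $r$, and one would need monotone convergence in $r$, which still works since $\mathcal I_n(\min\{\sqrt{K}r,\pi\})\uparrow\mathcal I_n(\pi)$. Compactness of $SN$ and continuity of $\operatorname{Ric}$ settle this, so I expect no genuine obstacle. The substantive work has already been done in Theorem~\ref{thm:average-metric-comparison}.
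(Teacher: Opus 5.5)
Your proposal is correct and follows the paper's proof essentially verbatim. You take $r$ large in the averaged volume bound of Corollary~\ref{cor:positive-Ricci-metric-comparison}, use $\overline V(r)=|N|$ once $r\ge\operatorname{diam}(N)$, and identify $\omega_{n-1}\mathcal I_n(\pi)=|\mathbb S^n|$. Your observation that the uniform bound $K\ge\kappa>0$ lets a single fixed large $r$ replace the paper's ``letting $r\to\infty$'' is a harmless sharpening of the same argument.
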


\begin{proof}
Since $N$ is connected and compact, $B_p(r)=N$ whenever
$r\ge \operatorname{diam}(N)$. Hence $\overline V(r)=|N|$ for all sufficiently large
$r$. Letting $r\to\infty$ in
\eqref{eq:positive-Ricci-average-volume} gives
\begin{align*}
|N|
& \le
\omega_{n-1}\int_{SN}
K(z)^{-\frac n2}
\left(\int_0^\pi\sin^{n-1}u\, du\right)dm(z)\\
& = |\mathbb S^n| \int_{S N}\left(\frac{\operatorname{Ric}_p(v, v)}{n-1}\right)^{-\frac{n}{2}} d m(p, v).
\end{align*}
\end{proof}

A direct estimate of $\int_{S_pN}\left(\frac{\operatorname{Ric}_p(v, v)}{n-1}\right)^{-\frac{n}{2}} \, d\theta$ in terms of the scalar curvature is not immediate. Indeed, since $x\mapsto x^{-\frac{n}{2}}$ is convex on $(0, \infty)$, Jensen's inequality gives a lower bound rather than an upper bound. However, the exponent $-\frac{n}{2}$ in \eqref{eq:spectrum-bound} is special: it allows the spherical average at each point to be evaluated exactly as the inverse square root of a determinant.

\begin{lemma}\label{lem:spherical-det}
Let $L$ be a positive definite self-adjoint automorphism of $\mathbb R^n$. Then
\begin{equation}\label{eq:spherical-det}
\frac{1}{\omega_{n-1}}\int_{\mathbb S^{n-1}}
\langle L\theta, \theta\rangle^{-\frac{n}{2}}\, d\theta
=(\det L)^{-\frac{1}{2}}.
\end{equation}
\end{lemma}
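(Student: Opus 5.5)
The plan is to use the Gaussian integral trick and polar coordinates in $\mathbb R^n$. First compute
$$
\int_{\mathbb R^n} e^{-\langle Lx, x\rangle}\, dx
$$
in two ways. Diagonalising $L$ by an orthogonal change of variables (which has unit Jacobian) turns the integral into a product of one-dimensional Gaussians. Since $\int_{\mathbb R} e^{-\lambda s^2}\, ds=\sqrt{\pi/\lambda}$ for $\lambda>0$, this gives $\pi^{n/2}(\det L)^{-1/2}$. Positive definiteness of $L$ is exactly what guarantees convergence here.

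Next, write $x=\rho\theta$ with $\rho>0$ and $\theta\in\mathbb S^{n-1}$, so that $dx=\rho^{n-1}\, d\rho\, d\theta$. For fixed $\theta$ set $c=\langle L\theta, \theta\rangle>0$ and substitute $u=c\rho^2$. This gives
$$
\int_0^\infty e^{-c\rho^2}\rho^{n-1}\, d\rho=\tfrac12\Gamma\left(\tfrac n2\right)c^{-\frac n2}.
$$
Hence, by Tonelli, which applies because the integrand is positive,
$$
\pi^{n/2}(\det L)^{-1/2}=\tfrac12\Gamma\left(\tfrac n2\right)\int_{\mathbb S^{n-1}}\langle L\theta, \theta\rangle^{-\frac n2}\, d\theta.
$$

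Finally, identify the normalising constant by taking $L=I$ in the same identity. This yields $\pi^{n/2}=\tfrac12\Gamma(\tfrac n2)\,\omega_{n-1}$, which is the classical formula $\omega_{n-1}=2\pi^{n/2}/\Gamma(n/2)$. Dividing the general identity by this one gives exactly \eqref{eq:spherical-det}.

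There is no real obstacle in this argument. The only points to watch are the convergence of the Gaussian integral, which positive definiteness secures, and the justification of Fubini in polar coordinates, which holds since everything is nonnegative. An alternative route is the change of variables $\theta\mapsto L^{1/2}\theta/|L^{1/2}\theta|$ on the sphere, whose Jacobian is $\det L^{1/2}\,|L^{1/2}\theta|^{-n}$. The Gaussian computation avoids having to compute that Jacobian and is the approach I would write down.
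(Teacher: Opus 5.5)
Your proposal is correct and follows essentially the same route as the paper: you evaluate $\int_{\mathbb R^n}e^{-\langle Lx,x\rangle}\,dx$ by orthogonal diagonalization and again in polar coordinates via $u=c\rho^2$, then normalize with $L=I$. Your added remarks on convergence and on Tonelli's theorem are correct.
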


\begin{proof}
Using $\int_{\mathbb R}e^{-\lambda x^2}\, dx =\sqrt{\frac{\pi}{\lambda}}$, $\lambda>0$,
together with an orthogonal diagonalization of $L$ and Fubini's theorem, we obtain
\begin{equation}\label{eq:gaussian}
\int_{\mathbb R^n}e^{-\langle Lx, x\rangle}\, dx
=\pi^{\frac{n}{2}}(\det L)^{-\frac{1}{2}}.
\end{equation}
Recall that $\Gamma(\alpha)=\int_0^\infty e^{-u}u^{\alpha-1}\, du$ for $\alpha>0$.
For each fixed $\theta\in\mathbb S^{n-1}$, set
$a=\langle L\theta, \theta\rangle$ and use the substitution $u=ar^2$, we obtain
$$
\int_0^\infty
e^{-r^2\langle L\theta, \theta\rangle}r^{n-1}\, dr
=
\frac12\Gamma\left(\frac n2\right)
\langle L\theta, \theta\rangle^{-\frac n2}.
$$
Integrating over $\mathbb S^{n-1}$ and using polar coordinates yields
\begin{equation}\label{eq:spher-int}
\int_{\mathbb R^n}e^{-\langle Lx, x\rangle}\, dx
=
\frac12\Gamma\left(\frac n2\right)
\int_{\mathbb S^{n-1}}
\langle L\theta, \theta\rangle^{-\frac n2}\, d\theta.
\end{equation}
For $L=I$, the same calculation gives
$\pi^{\frac{n}{2}}=\frac12\Gamma(\frac{n}{2})\omega_{n-1}$.
Comparing \eqref{eq:spher-int} with \eqref{eq:gaussian} proves
\eqref{eq:spherical-det}.
\end{proof}
Recall that for a symmetric $(0, 2)$-tensor field $\sum_{i, j}T_{ij}dx^idx^j$ on $(N, g)$, we define its determinant (at a point) to be
\begin{align*}
\det{}_g T:= \det (T^{\#})=\det (T^i_j),
\end{align*}
where $T^i_j:=\sum_k g^{ik}T_{kj}$.

\begin{theorem}\label{thm:determinant}
Let $(N^n, g)$ be closed with
$\operatorname{Ric}>0$. Then
\begin{equation*}\label{eq:determinant-bound}
\frac{\lvert N\rvert}{\lvert\mathbb S^n\rvert}
\le \frac{1}{\lvert N\rvert} \int_N \det{}_g\left(\frac{\operatorname{Ric}}{n-1}\right)^{-\frac{1}{2}} \, d\mathrm{vol}_g.
\end{equation*}
The equality holds if and only if $(N, g)$ is isometric to a round sphere of positive constant curvature.
\end{theorem}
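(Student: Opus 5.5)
The inequality itself is a direct consequence of Proposition~\ref{prop:spectrum} combined with Lemma~\ref{lem:spherical-det}. Writing out the normalized Liouville measure \eqref{eq:normalized-liouville}, the right-hand side of \eqref{eq:spectrum-bound} becomes
\[
\frac{1}{|N|}\int_N \frac{1}{\omega_{n-1}}\int_{S_pN}\left(\frac{\operatorname{Ric}_p(v,v)}{n-1}\right)^{-\frac n2} d\theta_p(v)\, d\mathrm{vol}_g(p).
\]
At each $p$, I will identify $(T_pN,g_p)$ isometrically with $\mathbb R^n$ and apply Lemma~\ref{lem:spherical-det} to the positive definite operator $L_p=\operatorname{Ric}_p^\sharp/(n-1)$; positivity holds because $\operatorname{Ric}>0$. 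This evaluates the inner spherical average exactly as $\det_g(\operatorname{Ric}/(n-1))^{-1/2}$, which gives the stated bound.

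For the equality case, the round sphere of curvature $\kappa$ has $\operatorname{Ric}=(n-1)\kappa g$ and $|N|=\kappa^{-n/2}|\mathbb S^n|$. The right-hand side is then $\kappa^{-n/2}$, so equality holds. Conversely, assume equality. Then every inequality in the chain leading to Proposition~\ref{prop:spectrum} must be an equality after integration, at least for all large $r$ and hence for the total volume. I will track the chain: $\widehat F_z\le\widehat y_z^{n-1}$ pointwise, then the shuffling inequality \eqref{eq:average-stopped-y-comparison}, then the limit $r\to\infty$.

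From the first step, equality of the integrals $\int_0^\infty\!\int_{SN}$ together with the pointwise inequality forces $\widehat F_z(t)=\widehat y_z(t)^{n-1}$ for a.e.\ $(z,t)$. By continuity this means $F_z=y_z^{n-1}$ on $[0,c(z))$, and also that the first zero of $y_z$ coincides with $c(z)$ for a.e.\ $z$. The identity \eqref{eq:exact-shear} then gives $\stackrel{\circ}{A}_z\equiv0$ and $d_z\equiv0$ along these geodesics. So the geodesic spheres about every point are umbilic up to the cut locus; in particular the shape operator is $A_z=(y_z'/y_z)I$.

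The main obstacle is turning this umbilicity, or equality in the shuffling step, into constant curvature. My first attempt is the Riccati equation $A'+A^2+\mathcal R_{\gamma'}=0$. With $A=\phi I$ it gives $\mathcal R_{\gamma_z'}=-(\phi'+\phi^2)I$, so the Jacobi operator is a multiple of the identity along every geodesic from every point. Hence all sectional curvatures of planes containing $v$ equal $K(z)$, for every $v$, so $N$ is isotropic at each point. By Schur's lemma ($n\ge3$), or directly since isotropy makes the Ricci tensor pointwise a multiple of $g$, $N$ has constant sectional curvature $\kappa>0$. The shuffling inequality should then be an equality automatically, because $K$ is constant. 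Finally, a closed manifold of constant curvature $\kappa$ with $|N|=\kappa^{-n/2}|\mathbb S^n|$ (the equality) must be simply connected, since a quotient $\mathbb S^n/\Gamma$ has volume $\kappa^{-n/2}|\mathbb S^n|/|\Gamma|$. So $N$ is isometric to the round sphere. The delicate points are the measure-zero exceptions, which are handled by continuity of the curvature in $z$, and the $n=2$ case, where isotropy is automatic and Gauss--Bonnet with the volume equality gives the result.
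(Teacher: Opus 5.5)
Your proposal is correct and follows the paper's proof. The inequality comes from Proposition~\ref{prop:spectrum} with Lemma~\ref{lem:spherical-det}, and rigidity comes from equality in $\widehat F_z\le\widehat y_z^{n-1}$, the vanishing of $\stackrel{\circ}{A}_z$ via \eqref{eq:exact-shear}, isotropy, Schur's lemma, and the volume count that excludes nontrivial quotients $\mathbb S^n(r)/\Gamma$. You differ only in reading off $\mathcal R_{\gamma_z'}=-(\phi'+\phi^2)I$ directly from the Riccati equation instead of the small-$t$ expansion of $A_z(t)$, and in making the null-set/continuity step explicit.

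Your separate $n=2$ remark covers a case the paper's argument (third orthogonal vector, Schur) skips. To finish it, combine Gauss--Bonnet with Cauchy--Schwarz for the Gaussian curvature $K$: $|N|^2\le\int_N K\,dA\int_N K^{-1}\,dA=2\pi\chi(N)\int_N K^{-1}\,dA\le 4\pi\int_N K^{-1}\,dA$. Equality then forces $\chi(N)=2$ and $K$ constant.
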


\begin{proof}
At $p\in N$, let $L_p=\frac{1}{n-1}\operatorname{Ric}_p^{\sharp}$.
Then
$\frac{\operatorname{Ric}_p(v, v)}{n-1}=\langle L_pv, v\rangle$. Split the integral
in \eqref{eq:spectrum-bound} first over $p$ and then over $S_pN$. By
Lemma \ref{lem:spherical-det},
\begin{align*}
\int_{SN} \left(\frac{\operatorname{Ric}_p(v, v)}{n-1}\right)^{-\frac{n}{2}}dm(p, v)
& =\frac{1}{\lvert N\rvert}
\int_N\left[\frac{1}{\omega_{n-1}}\int_{S_pN} \langle L_pv, v\rangle^{-\frac{n}{2}}\, d\theta_p(v) \right]d\mathrm{vol}_g(p)\\
& =\frac{1}{\lvert N\rvert} \int_N(\det L_p)^{-\frac{1}{2}}\, d\mathrm{vol}_g(p).
\end{align*}
Substitution into Proposition \ref{prop:spectrum} proves the theorem.

Suppose the equality holds, then \eqref{ineq:F y} is an equality. By Proposition \ref{prop:adapted}, $\stackrel{\circ}{A}_z(t)=0$ for all $z$ and for small enough $t$. Thus the sufficiently small geodesic spheres are umbilic. Using (cf. \cite[Theorem 3.1]{ChenVanhecke})
$$
A_z(t)=\frac{1}{t} I-\frac{t}{3} \mathcal {R}_{{\gamma}'_z(0)}+O\left(t^2\right),
$$
one obtains
$$
\mathcal {R}_v=\frac{\operatorname{Ric}(v, v)}{n-1} I \quad \text { on } v^{\perp}
$$
for every unit vector $v$. For orthonormal vectors $v, w \in T_p N$, this implies the sectional curvature
$K(v, w)=\left\langle \mathcal R_v w, w\right\rangle=\frac{\operatorname{Ric}(v, v)}{n-1}$.
By the symmetry of sectional curvature, $\operatorname{Ric}(v, v)=\operatorname{Ric}(w, w)$.
In general, if two unit vectors $v, w$ are not orthogonal, choose ($n \ge 3$) a unit vector $u$ orthogonal to both. It follows that
$$
\operatorname{Ric}(v, v)=\operatorname{Ric}(u, u)=\operatorname{Ric}(w, w).
$$
Hence, at each point, $\mathrm{Ric}=\lambda g$. So every sectional curvature at that point equals $\frac{\lambda}{n-1}$. Schur's lemma then shows that $\lambda$ is constant on $N$. Let $\frac{\lambda}{n-1}=\frac{1}{r^2}$. Then the equality implies $|N|_g=|\mathbb S^n|r^n$. But then $N$ is isometric to $\mathbb{S}^n(r) / \Gamma$, where $\Gamma$ is a finite group acting freely by isometries. The equality case says that $\frac{\left|\mathbb{S}^n\right|r^n}{|\Gamma|}=|N|_g=\left|\mathbb{S}^n\right|r^n$, which is possible only when $\Gamma$ is trivial and $N$ is isometric to a round sphere of positive curvature.
\end{proof}
The following algebraic inequality is elementary.
\begin{lemma}\label{lem:det-lower}
Let $a_1, \ldots, a_n\ge 0$. Then $$ \prod_{i=1}^n(1+a_i) \ge 1+\sum_{i=1}^n a_i. $$
Equality holds if and only if at most one of the numbers $a_i$ is positive.
\end{lemma}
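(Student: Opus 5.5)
The plan is to prove the inequality by induction on $n$, with the induction step obtained by expanding the product. For $n=1$ the two sides coincide. Suppose the inequality holds for $n-1$ numbers, and write
$$
\prod_{i=1}^n(1+a_i)=(1+a_n)\prod_{i=1}^{n-1}(1+a_i).
$$
Since $1+a_n>0$, we may apply the inductive hypothesis to the second factor without reversing the inequality. This gives
$$
\prod_{i=1}^n(1+a_i)\ge (1+a_n)\Bigl(1+\sum_{i=1}^{n-1}a_i\Bigr)=1+\sum_{i=1}^n a_i+a_n\sum_{i=1}^{n-1}a_i,
$$
and the last term is nonnegative because all $a_i\ge 0$.

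A cleaner way to see both the inequality and the equality case at once is to expand the product completely:
$$
\prod_{i=1}^n(1+a_i)=\sum_{j=0}^n\sigma_j(a)=1+\sum_{i=1}^n a_i+\sum_{j\ge 2}\sigma_j(a).
$$
Each $\sigma_j(a)$ with $j\ge 2$ is a sum of products of distinct $a_i$'s with nonnegative coefficients, so it is nonnegative. This proves the inequality, and it matches the remark following Theorem~\ref{thm:intro-main2}, where the sum is truncated at $j=1$.

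For the equality case, equality holds if and only if $\sigma_j(a)=0$ for every $j\ge 2$. Because every term is nonnegative, this is equivalent to $\sigma_2(a)=\sum_{i<k}a_ia_k=0$, which in turn holds if and only if $a_ia_k=0$ for all $i\neq k$. That is exactly the condition that at most one $a_i$ is positive. Conversely, if at most one $a_i$ is nonzero, all higher $\sigma_j$ vanish and equality holds.

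No step here is a real obstacle; the only point needing care is the converse direction of the equality characterization, and it is handled by the observation that $\sigma_2(a)=0$ already forces all pairwise products, and hence all $\sigma_j(a)$ with $j\ge 2$, to vanish.
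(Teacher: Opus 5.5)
Your proof is correct: the expansion $\prod_{i=1}^n(1+a_i)=\sum_{j=0}^n\sigma_j(a)$ with every $\sigma_j(a)\ge 0$ gives the inequality. For the equality case, $\sigma_2(a)=0$ forces all pairwise products, and hence all higher $\sigma_j(a)$, to vanish. The paper gives no written proof and simply calls the inequality elementary, but its remark after Theorem~\ref{thm:intro-main2} uses exactly this expansion, so your argument is the intended one. Your induction paragraph is redundant once you have the expansion.
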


\begin{theorem}\label{thm:main}
Let $(N^n, g)$ be a closed Riemannian manifold, where $n\ge 3$, and suppose
that $\operatorname{Ric}_g\ge (n-1)g$.
Then
\begin{equation}\label{eq:scalar-curvature-volume-bound}
\frac{|N|_g}{|\mathbb S^n|}
\le \frac{1}{|N|_g} \int_N \left(\frac{R_g}{n-1}-(n-1) \right)^{-\frac12} \, d\mathrm{vol}_g.
\end{equation}
The equality holds if and only if $N$ is isometric to the unit sphere.
\end{theorem}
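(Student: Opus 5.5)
The plan is to deduce the inequality pointwise from Theorem~\ref{thm:determinant}, using Lemma~\ref{lem:det-lower}. Since $\operatorname{Ric}_g\ge (n-1)g>0$, Theorem~\ref{thm:determinant} applies. At each $p\in N$, let $1+a_1,\dots,1+a_n$ denote the eigenvalues of $L_p=\frac{1}{n-1}\operatorname{Ric}_p^\sharp$. The Ricci lower bound gives $a_i\ge 0$. Moreover
$$\sum_i a_i=\frac{R_g(p)}{n-1}-n.$$
Lemma~\ref{lem:det-lower} then yields
$$\det{}_g\Bigl(\frac{\operatorname{Ric}}{n-1}\Bigr)=\prod_i(1+a_i)\ge 1+\sum_i a_i=\frac{R_g}{n-1}-(n-1)\ge 1>0.$$
Since $x\mapsto x^{-1/2}$ is decreasing on $(0,\infty)$, it follows pointwise that
$$\det{}_g\Bigl(\frac{\operatorname{Ric}}{n-1}\Bigr)^{-\frac12}\le \Bigl(\frac{R_g}{n-1}-(n-1)\Bigr)^{-\frac12}.$$
Integrating over $N$ and inserting this into Theorem~\ref{thm:determinant} gives \eqref{eq:scalar-curvature-volume-bound}.

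For the equality case, the unit sphere has $R_g=n(n-1)$, so the integrand equals $1$ and both sides equal $1$; hence equality holds there. Conversely, suppose equality holds in \eqref{eq:scalar-curvature-volume-bound}. The pointwise inequality above is between continuous functions, so equality of the integrals forces pointwise equality everywhere on $N$. In particular equality holds in Theorem~\ref{thm:determinant}, so $(N,g)$ is isometric to a round sphere of some radius $\rho$. The Ricci lower bound forces $\rho^{-2}\ge 1$.

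It remains to pin down $\rho=1$. On the round sphere of radius $\rho$, each $a_i=\rho^{-2}-1$, so the equality clause of Lemma~\ref{lem:det-lower} (at most one $a_i$ positive, with $n\ge 3$) forces $a_i=0$, i.e.\ $\rho=1$. Alternatively, one can argue directly: $|N|=|\mathbb S^n|\rho^n$ while the right-hand side equals $(n-1)^{-1/2}\bigl(n\rho^{-2}-(n-1)\bigr)^{-1/2}\cdot(n-1)^{1/2}$, and comparing these yields $\rho=1$. Either route identifies $N$ with the unit sphere.

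The main obstacle is only bookkeeping. One must use the continuity of both integrands to upgrade equality of integrals to pointwise equality, so that the equality case of Theorem~\ref{thm:determinant} can legitimately be invoked. The radius normalization is then handled by the equality condition in Lemma~\ref{lem:det-lower}.
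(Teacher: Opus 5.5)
Your proposal is correct and matches the paper's proof: the pointwise bound $\det{}_g\bigl(\operatorname{Ric}/(n-1)\bigr)\ge \frac{R_g}{n-1}-(n-1)$ from Lemma~\ref{lem:det-lower} is inserted into Theorem~\ref{thm:determinant}, and rigidity follows from the equality case of Theorem~\ref{thm:determinant} together with the equality clause of Lemma~\ref{lem:det-lower}, which forces the radius to be $1$. Your continuity argument, which upgrades equality of the integrals to pointwise equality, makes explicit a step the paper leaves implicit.
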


\begin{proof}
At each $p\in N$, consider the self-adjoint automorphism $L_p=\frac{1}{n-1}\operatorname{Ric}(p)^\sharp$. The Ricci curvature lower bound implies that the eigenvalues of $L_p$ can be written as $1+a_i$, where $a_i\ge 0$. In particular, $\sum_{i=1}^n(1+a_i) = \operatorname{tr}L_p = \frac{R_g(p)}{n-1}$. Lemma~\ref{lem:det-lower} then gives
\begin{align*}
\det L_p
& =
\prod_{i=1}^n(1+a_i)\\
& \ge
1+\sum_{i=1}^n a_i
=
\frac{R_g(p)}{n-1}-(n-1).
\end{align*}
Therefore,
$$
\det\nolimits_g
\left(\frac{\operatorname{Ric}_g}{n-1}\right)^{-\frac12}
\le
\left(
\frac{R_g}{n-1}-(n-1)
\right)^{-\frac12}.
$$
The inequality now follows from Theorem~\ref{thm:determinant}.

Suppose the equality holds, then Theorem \ref{thm:determinant} shows that $N$ is isometric to a round sphere. On the other hand, by Lemma \ref{lem:det-lower}, $\operatorname{Ric}_g-(n-1) g$ is at most rank one at every point. Therefore $N$ has constant sectional curvature $1$ and so is the unit sphere.
\end{proof}

\begin{corollary}\label{cor:uniform-scalar-bound}
Let $n\ge 3$, let $\varepsilon\ge0$, and let $(N^n, g)$ be a closed
Riemannian manifold satisfying
\begin{equation*}\label{eq:main-assumptions}
\operatorname{Ric}_g\ge (n-1)g,
\qquad
R_g\ge n(n-1)(1+\varepsilon).
\end{equation*}
Then
\begin{equation*}\label{eq:main-bound}
|N|_g \le \frac{1}{\sqrt{1+n\varepsilon}} |\mathbb S^n|.
\end{equation*}
The equality holds if and only if $N$ is isometric to the unit sphere and $\varepsilon=0$.
\end{corollary}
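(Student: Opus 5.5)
This follows directly from Theorem~\ref{thm:main} by bounding the integrand pointwise. The plan is to insert the uniform scalar lower bound into the right-hand side of \eqref{eq:scalar-curvature-volume-bound}, then use the equality case of Theorem~\ref{thm:main} for rigidity.

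First, the pointwise bound. At every $p\in N$, the assumption $R_g\ge n(n-1)(1+\varepsilon)$ gives
$$
\frac{R_g(p)}{n-1}-(n-1)\ \ge\ n(1+\varepsilon)-(n-1)\ =\ 1+n\varepsilon\ >0 .
$$
Since $x\mapsto x^{-1/2}$ is decreasing on $(0,\infty)$, it follows that
$$
\left(\frac{R_g}{n-1}-(n-1)\right)^{-\frac12}\le (1+n\varepsilon)^{-\frac12}\quad\text{everywhere on } N.
$$

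Next, integrate. Substituting this bound into Theorem~\ref{thm:main} gives
$$
\frac{|N|_g}{|\mathbb S^n|}\le \frac{1}{|N|_g}\int_N (1+n\varepsilon)^{-\frac12}\,d\mathrm{vol}_g=(1+n\varepsilon)^{-\frac12},
$$
which is the claimed inequality.

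For the equality case, suppose $|N|_g=(1+n\varepsilon)^{-1/2}|\mathbb S^n|$. Then both inequalities in the chain are equalities, and in particular equality holds in Theorem~\ref{thm:main}. By its rigidity statement, $N$ is isometric to the unit sphere. The unit sphere has $R_g\equiv n(n-1)$, so the hypothesis $R_g\ge n(n-1)(1+\varepsilon)$ forces $\varepsilon=0$. Conversely, if $N$ is the unit sphere and $\varepsilon=0$, both sides equal $1$.

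None of the steps is a real obstacle, since all the analytic work is contained in Theorem~\ref{thm:main}. The one point to check is that the equality case of the corollary reduces to the equality case of Theorem~\ref{thm:main}. This holds because the pointwise bound only ever weakens the right-hand side: if the corollary is sharp, Theorem~\ref{thm:main} must be sharp as well.
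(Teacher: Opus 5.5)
Your proposal is correct and follows the paper's proof: the pointwise bound $\frac{R_g}{n-1}-(n-1)\ge 1+n\varepsilon$ is substituted into \eqref{eq:scalar-curvature-volume-bound}. You also spell out the equality case, reducing it to the rigidity of Theorem~\ref{thm:main} and using $R_g\equiv n(n-1)$ on the unit sphere to force $\varepsilon=0$, which the paper leaves implicit.
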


\begin{proof}
The scalar curvature lower bound gives
$$
\frac{R_g}{n-1}-(n-1)
\ge n(1+\varepsilon)-(n-1)
=1+n\varepsilon.
$$
Substitution into \eqref{eq:scalar-curvature-volume-bound} proves the
corollary.
\end{proof}

Let us estimate the remaining gap from Bray's bound.
For $n\ge 2$ and $\varepsilon>0$, we have
\begin{equation*}\label{eq:strict-gap}
(1+n\varepsilon)^{-\frac{1}{2}}>(1+\varepsilon)^{-\frac{n}{2}}.
\end{equation*}
Indeed, the Bernoulli's inequality gives $(1+\varepsilon)^n>1+n\varepsilon$, which gives the above inequality.

The binomial expansions are
\begin{align*}
(1+n\varepsilon)^{-\frac{1}{2}}
=1-\frac n2\varepsilon+\frac{3n^2}{8}\varepsilon^2 +O(\varepsilon^3), \quad
(1+\varepsilon)^{-\frac{n}{2}}
=1-\frac n2\varepsilon+\frac{n(n+2)}{8}\varepsilon^2 +O(\varepsilon^3),
\end{align*}
Therefore, as $\varepsilon \downarrow 0$,
$$
(1+n \varepsilon)^{-\frac{1}{2}}-(1+\varepsilon)^{-\frac{n}{2}}=\frac{n(n-1)}{4} \varepsilon^2+O\left(\varepsilon^3\right).
$$
Thus the volume factor in our estimate agrees with Bray's predicted factor to first order in $\varepsilon$, but falls short of Bray's predicted bound by a positive term of order $\varepsilon^2$.

\subsection{Volume comparison involving the Q-curvature}
In this subsection, we prove a volume comparison result under an integral lower bound of the Branson $Q$-curvature.

Recall that for a Riemannian manifold $(N^n, g)$ of dimension $n\ge 3$, the Branson $Q$-curvature \cite{Branson1985} is defined by
\begin{equation}\label{eq:Q-curvature}
Q_g=-\frac{1}{2(n-1)}\Delta_gR_g-\frac{2}{(n-2)^2}|\stackrel {\circ}{\operatorname{Ric}_g}|_g^2+\frac{n^2-4}{8n(n-1)^2}R_g^2,
\end{equation}
where $\stackrel {\circ}{\operatorname{Ric}_g}$ denotes the traceless Ricci tensor. In particular, if $\operatorname{Ric}_g=(n-1)g$, then
$Q_g=\frac{n(n^2-4)}8$.

Volume comparison under assumptions on the $Q$-curvature was studied by Lin and Yuan~\cite{LinYuan2}. In particular, they proved that if $(N^n, \bar g)$ is a closed strictly stable Einstein manifold satisfying $\operatorname{Ric}_{\bar g}=(n-1)\bar g$, then every metric $g$ sufficiently close to $\bar g$ in the $C^4$-topology and satisfying
$$Q_g\ge \frac{n(n^2-4)}{8}$$
has volume at most $|N|_{\bar g}$.

Recently, Jiang, Li, and Wang obtained a volume comparison result \cite[Theorem 5.1]{JiangLiWang} involving $Q_g$ as an application of Theorem \ref{thm:Bray conjecture}. Their argument uses a pointwise lower bound for $Q_g$ and applies the maximum principle to \eqref{eq:Q-curvature} at a minimum point of $R_g$, thereby converting the $Q$-curvature assumption into a pointwise lower bound for the scalar curvature.

Our scalar-curvature integral estimate yields an alternative formulation involving a weighted integral lower bound for $Q_g$ rather than a pointwise lower bound:
\begin{theorem}\label{thm:integral Q curvature}
Let $(N^n, g)$ be a closed Riemannian manifold of dimension $n\ge 3$ satisfying
$\operatorname{Ric}_g\ge (n-1)g$.
Let $\varepsilon\ge 0$, and suppose that
\begin{equation}\label{eq:integral ass}
\int_N\frac{Q_g-\frac{n(n^2-4)}8(1+\varepsilon)^2}{\left(R_g-(n-1)^2\right)^2}\, d\mathrm{vol}_g\ge 0.
\end{equation}
Then
$|N|_g \le \frac{1}{\sqrt{1+n\varepsilon}}|\mathbb S^n|$.
Equality holds if and only if $\varepsilon=0$ and $(N, g)$ is isometric to the unit sphere.
\end{theorem}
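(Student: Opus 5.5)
The plan is to turn the weighted integral hypothesis \eqref{eq:integral ass} into an integral inequality that involves only the scalar curvature, and then feed it into Theorem~\ref{thm:main}. Throughout, set $a=(n-1)^2$ and $c_n=\frac{n(n^2-4)}{8}$. The assumption $\operatorname{Ric}_g\ge(n-1)g$ gives $R_g\ge n(n-1)$, hence $R_g-a\ge n-1>0$, so the weight $(R_g-a)^{-2}$ is smooth and positive.

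\emph{Step 1: remove the Laplacian and the traceless term.} In \eqref{eq:Q-curvature} I first drop $-\frac{2}{(n-2)^2}|\stackrel{\circ}{\operatorname{Ric}_g}|^2\le 0$. I then integrate by parts with $f(R)=(R-a)^{-2}$:
$$
\int_N -\Delta_g R_g\, f(R_g)\, d\mathrm{vol}_g=\int_N f'(R_g)\,|\nabla R_g|^2\, d\mathrm{vol}_g=-2\int_N \frac{|\nabla R_g|^2}{(R_g-a)^3}\, d\mathrm{vol}_g\le 0 .
$$
So the Laplacian term contributes nonpositively. Since $\frac{n^2-4}{8n(n-1)^2}=\frac{c_n}{n^2(n-1)^2}$, dividing by $c_n$ and writing $x=\frac{R_g}{n(n-1)}\ge1$ turns \eqref{eq:integral ass} into
$$
0\le \int_N \frac{x^2-(1+\varepsilon)^2}{(nx-n+1)^2}\, d\mathrm{vol}_g ,
$$
with strict inequality unless $\nabla R_g\equiv0$ and $\stackrel{\circ}{\operatorname{Ric}_g}\equiv0$. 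Here I used $R_g-a=(n-1)(nx-n+1)$ and absorbed the positive constant factor.

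\emph{Step 2: the pointwise sign.} For $x\ge1$ we have $nx-n+1-x=(n-1)(x-1)\ge0$, so $0<x\le nx-n+1$. Hence the integrand satisfies $\frac{x^2}{(nx-n+1)^2}-\frac{(1+\varepsilon)^2}{(nx-n+1)^2}\le \frac{x^2-1}{(nx-n+1)^2}-\frac{(1+\varepsilon)^2-1}{(nx-n+1)^2}$. Both pieces are $\le0$, because $x^2\le (nx-n+1)^2$ makes the numerator of the first piece at most $(nx-n+1)^2-1$, and I recheck this more carefully below. So the integrand is pointwise $\le0$. Equality holds only where $x=1$ and $\varepsilon=0$.

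The nonnegative integral therefore forces every inequality to be an equality. This gives $\varepsilon=0$, $R_g\equiv n(n-1)$, and $\stackrel{\circ}{\operatorname{Ric}_g}\equiv0$, so $\operatorname{Ric}_g=(n-1)g$. Theorem~\ref{thm:main} (or simply Bishop) then yields $|N|_g\le|\mathbb S^n|=(1+n\varepsilon)^{-1/2}|\mathbb S^n|$. Its equality clause gives that equality holds only for the unit sphere. Conversely, the unit sphere has $Q_g=c_n$, so the integrand vanishes and \eqref{eq:integral ass} holds with $\varepsilon=0$.

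\emph{Main obstacle.} The delicate point is Step 2, namely confirming the normalization. If my reduction has the sign or constants right, the hypothesis is nearly rigid, and the argument above is complete.

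If instead the correct reduced integrand turns out to change sign, I would fall back on a tangent-line argument. Write $w=\frac{R_g}{n-1}-(n-1)$ and $s=1/w\in(0,1]$. The reduced integrand is then $G(s)=\frac{1}{n^2}(1+(n-1)s)^2-(1+\varepsilon)^2$, which is convex in $s$ and vanishes at $s_0=(1+n\varepsilon)^{-1}$. Since $s\mapsto s^{1/2}$ is concave, combining its tangent line at $s_0$ with the tangent line of $G$ at $s_0$ gives a pointwise bound $w^{-1/2}\le (1+n\varepsilon)^{-1/2}+\lambda\,G$ with $\lambda$ of a definite sign. Integrating and using the sign of $\int G$ would give $\fint_N w^{-1/2}\le(1+n\varepsilon)^{-1/2}$, and Theorem~\ref{thm:main} would then finish the proof. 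The step to check is that the sign of $\lambda$ matches the direction of the integral hypothesis.
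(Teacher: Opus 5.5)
Your Step~1 is correct and matches the paper's integration by parts. Step~2, however, is false. The displayed ``inequality'' there is actually an identity (the $\pm1$ cancel). Its first piece $\frac{x^2-1}{(nx-n+1)^2}$ is $\ge 0$ for $x\ge 1$, not $\le 0$.

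In fact the reduced integrand $\frac{x^2-(1+\varepsilon)^2}{(nx-n+1)^2}$ is strictly positive wherever $R_g>n(n-1)(1+\varepsilon)$, so it has no sign. For $\varepsilon=0$ your reduced inequality holds automatically and carries no information. The hypothesis is not rigid either. A round sphere of radius $r<1$ has $\operatorname{Ric}_g\ge (n-1)g$ and $Q_g\equiv\frac{n(n^2-4)}{8r^4}$. It therefore satisfies \eqref{eq:integral ass}, with equality, for $\varepsilon=r^{-2}-1>0$. This contradicts your conclusion that the hypothesis forces $\varepsilon=0$ and $\operatorname{Ric}_g=(n-1)g$. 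So the main line of the proposal proves nothing.

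Your fallback is the right idea, but as written it fails exactly at the point you flagged.

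\emph{The integrand is miscomputed.} Since $x/w=\frac{1+(n-1)s}{n}$ and $1/w=s$, the reduced integrand is
\[
G(s)=\frac{(1+(n-1)s)^2}{n^2}-(1+\varepsilon)^2 s^2 ,
\]
not $\frac{(1+(n-1)s)^2}{n^2}-(1+\varepsilon)^2$. Your version does not even vanish at $s_0=(1+n\varepsilon)^{-1}$ when $\varepsilon>0$.

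\emph{The correct $G$ is concave, not convex.} Indeed $G''=2\bigl(\frac{(n-1)^2}{n^2}-(1+\varepsilon)^2\bigr)<0$, and one checks $G(s_0)=0$ and $G'(s_0)=-\frac{2(1+\varepsilon)}{n}<0$. This settles the sign you left open. Take $\mu=\frac{n}{4(1+\varepsilon)}s_0^{-1/2}>0$. Then $h(s)=s^{1/2}+\mu G(s)$ is concave with $h'(s_0)=0$, so pointwise
\[
w^{-1/2}=s^{1/2}\le s_0^{1/2}-\mu G(s).
\]
Integrating and using $\fint_N G\ge 0$ gives $\fint_N w^{-1/2}\le (1+n\varepsilon)^{-1/2}$. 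Theorem~\ref{thm:main} then finishes. For the equality case, its rigidity gives the unit sphere, so $w\equiv 1$, which forces $\varepsilon=0$.

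With a convex $G$, as you claimed, the two tangent-line inequalities either do not combine at all or combine only with $\lambda>0$. That is useless against $\int G\ge 0$.

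Once repaired, this is a genuinely different route from the paper's. The paper sets $\widehat R=R_g-(n-1)^2$ and expands $R_g^2\widehat R^{-2}$. It then applies Cauchy--Schwarz to $\fint\widehat R^{-1}$ and solves a quadratic inequality to get $(\fint\widehat R^{-2})^{1/2}\le\frac{1}{(n-1)(1+n\varepsilon)}$. Finally it uses H\"older to pass to $\fint\widehat R^{-1/2}$. Your single pointwise concavity inequality replaces all three of those steps.
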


\begin{proof}
Let $\widehat R:=R_g-(n-1)^2$. Since $\operatorname{Ric}_g\ge (n-1)g$, we have $\widehat R\ge n-1>0$. Let
$q_{n, \varepsilon}:=\frac{n(n^2-4)}{8}(1+\varepsilon)^2$ and
$c_n:=\frac{n^2-4}{8n(n-1)^2}$.

Multiplying
$$Q_g=-\frac{1}{2(n-1)}\Delta_gR_g-\frac{2}{(n-2)^2}|\stackrel {\circ}{\operatorname{Ric}_g}|^2+c_nR_g^2$$
by $\widehat R^{-2}$ and integrating by parts gives
\begin{align*}
\int_NQ_g\widehat R^{-2}\, d\mathrm{vol}_g
=& -\frac{1}{n-1}\int_N\widehat R^{-3}|\nabla R_g|^2\, d\mathrm{vol}_g-\frac{2}{(n-2)^2}\int_N|\stackrel {\circ}{\operatorname{Ric}_g}|^2\widehat R^{-2}\, d\mathrm{vol}_g+c_n\int_NR_g^2\widehat R^{-2}\, d\mathrm{vol}_g\\
\le& c_n\int_NR_g^2\widehat R^{-2}\, d\mathrm{vol}_g.
\end{align*}
Combining this with the assumption \eqref{eq:integral ass}, and using $R_g=\widehat R+(n-1)^2$, gives
$$\left(\frac{q_{n, \varepsilon}}{c_n}-(n-1)^4\right)\fint_N\widehat R^{-2}\, d\mathrm{vol}_g\le 1+2(n-1)^2\fint_N\widehat R^{-1}\, d\mathrm{vol}_g. $$

Let $y:=\left(\fint_N\widehat R^{-2}\, d\mathrm{vol}_g\right)^{\frac{1}{2}}$. By Cauchy--Schwarz inequality, $\fint_N\widehat R^{-1}\, d\mathrm{vol}_g\le y$. Since $\frac{q_{n, \varepsilon}}{c_n}=n^2(n-1)^2(1+\varepsilon)^2$, we obtain
$$\left(n^2(n-1)^2(1+\varepsilon)^2-(n-1)^4\right)y^2\le 1+2(n-1)^2y. $$
Equivalently,
$$\left(s^2-a^2\right) y^2-2 a y-1\le0, $$
where $s:=n(n-1)(1+\varepsilon)$ and $a:=(n-1)^2$. The LHS factorizes as $((s-a) y-1)((s+a) y+1)$, and so
$$\left(\fint_N \widehat{R}^{-2} d \operatorname{vol}_g\right)^{\frac{1}{2}}=y\le \frac{1}{s-a}=\frac{1}{(n-1)(1+n\varepsilon)}. $$
Hölder's inequality gives
$$\fint_N\widehat R^{-\frac12}\, d\mathrm{vol}_g\le \left(\fint_N\widehat R^{-2}\, d\mathrm{vol}_g\right)^{\frac14}\le \frac{1}{\sqrt{(n-1)(1+n\varepsilon)}}. $$
Since $\frac{R_g}{n-1}-(n-1)=\frac{\widehat R}{n-1}$, it follows that
$$\fint_N\left(\frac{R_g}{n-1}-(n-1)\right)^{-\frac12}\, d\mathrm{vol}_g\le \frac{1}{\sqrt{1+n\varepsilon}}. $$
Theorem \ref{thm:main} then implies
$$\frac{|N|_g}{|\mathbb S^n|}
\le \fint_N\left(\frac{R_g}{n-1}-(n-1)\right)^{-\frac{1}{2}} d \operatorname{vol}_g
\le \frac{1}{\sqrt{1+n\varepsilon}}. $$
The equality statement follows from the rigidity statement of Theorem \ref{thm:main}.
\end{proof}

\end{document}